\newcommand\R{\mathbb{R}}
\newcommand\C{\mathbb{C}}
\newcommand\Z{\mathbb{Z}}
\newcommand\N{\mathbb{N}}
\def\pa{\partial}
\newcommand\A{\bf A}
\newcommand\LL{\mathcal{L}}
\numberwithin{equation}{section}
\newtheorem{proposition}{Proposition}[section]
\newtheorem{definition}{Definition}[section]
\newtheorem{lemma}{Lemma}[section]
\newtheorem{theorem}{Theorem}[section]
\newtheorem{corollary}{Corollary}[section]
\newtheorem{remark}{Remark}[section]
\begin{document}
\title[Decay and Strichartz estimates]{Decay and Strichartz estimates \\ in critical electromagnetic fields }

\author{Xiaofen Gao}
\address{Department of Mathematics, Beijing Institute of Technology, Beijing 100081}
\email{gaoxiaofen@bit.edu.cn}

\author{Zhiqing Yin}
\address{Department of Mathematics, Beijing Institute of Technology, Beijing 100081}
\email{zhiqingyin@bit.edu.cn}

\author{Junyong Zhang}
\address{Department of Mathematics, Beijing Institute of Technology, Beijing 100081}
\email{zhang\_junyong@bit.edu.cn}

\author{Jiqiang Zheng}
\address{Institute of Applied Physics and Computational Mathematics, Beijing 100088}
\email{zhengjiqiang@gmail.com; zheng\_jiqiang@iapcm.ac.cn}

\begin{abstract}
We study the $L^1\to L^\infty$-decay estimates for dispersive equations in the Aharonov-Bohm magnetic fields,
and further prove Strichartz estimates for the Klein-Gordon equation with critical electromagnetic potentials.
The novel ingredients are the construction of Schwartz kernels of the spectral measure and heat propagator for
the Schr\"odinger operator in Aharonov-Bohm magnetic fields. In particular, we explicitly construct the representation of the spectral measure and resolvent of the Schr\"odinger operator
with Aharonov-Bohm potentials, and show that the heat kernel in critical electromagnetic fields satisfies Gaussian boundedness.
In future papers, this result on the spectral measure will be used to (i) study the uniform resolvent estimates, and (ii) prove the $L^p$-regularity property of wave propagation in the same setting.
\end{abstract}

\maketitle

\begin{center}
 \begin{minipage}{120mm}
   { \small {\bf Key Words:  Spectral measure, Decay estimates,  Aharonov-Bohm potential, Klein-Gordon equation}
      {}
   }\\
    { \small {\bf AMS Classification:}
      { 42B37, 35Q40, 35Q41.}
      }
 \end{minipage}
 \end{center}


\tableofcontents

\section{Introduction and main results}

\subsection{The setting and motivation}
Let us consider the Schr\"odinger operator $\LL_{{\A},a}$ with critical electromagnetic potentials defined by
\begin{equation}\label{LAa}
\mathcal{L}_{{\A},a}=\Big(i\nabla+\frac{{\A}(\hat{x})}{|x|}\Big)^2+\frac{a(\hat{x})}{|x|^2},
\end{equation}
where $\hat{x}=\tfrac{x}{|x|}\in\mathbb{S}^1$, $a\in W^{1,\infty}(\mathbb{S}^{1},\mathbb{R})$ and ${\A}\in W^{1,\infty}(\mathbb{S}^1;\R^2)$  satisfies the transversality condition
\begin{equation}\label{eq:transversal}
{\A}(\hat{x})\cdot\hat{x}=0,
\qquad
\text{for all }x\in\R^2.
\end{equation}
Assume that
\begin{equation}\label{equ:condassa}
  \|a_-\|_{L^\infty(\mathbb{S}^1)}<\min_{k\in\Z}\{|k-\Phi_{\A}|\}^2,
  \qquad
  \Phi_{\A}\notin\Z,
\end{equation}
where $a_-:=\max\{0,-a\}$ is the negative part of $a$, and $\Phi_{\A}$ is the total flux along the sphere
\begin{equation}\label{equ:defphia1}
  \Phi_{\A}=\frac{1}{2\pi}\int_0^{2\pi} \alpha(\theta)\;d\theta,
\end{equation}
where $\alpha(\theta)$ is defined by \eqref{equ:alpha} below. Under the assumption \eqref{equ:condassa}, by using the Hardy inequality (see \cite{LW}, and \cite[cf. (27)]{FFT})
\begin{equation}\label{equ:ghardyLW}
  \min_{k\in\Z}\{|k-\Phi_{\A}|\}^2\int_{\R^2}\frac{|f|^2}{|x|^2}\;dx\leq \int_{\R^2}|\nabla_{\A}f|^2\;dx,
  \qquad
  \nabla_{{\A}}:=i\nabla+{\A},
\end{equation}
the Hamiltonian $\mathcal{L}_{{\A},a}$ can be extended as a self-adjoint operator on $L^2$, via Friedrichs' Extension Theorem (see e.g. \cite[Thm. VI.2.1]{K} and \cite[X.3]{RS}), on the natural form domain
$$
\mathcal D(\mathcal L_{{\A},a})\simeq H^1_{{\A},a}:=\left\{f\in L^2(\R^2;\C):\int_{\R^2}\left(\left|\nabla_{{\A}} f\right|^2+\frac{|f(x)|^2}{|x|^2}|a(\hat x)|\right)\,dx<+\infty\right\}.
$$

The Schr\"odinger operators with electromagnetic potentials have been extensively studied, see Reed-Simon \cite{RS}.
In this paper, in sprit of \cite{Z,CYZ,FZZ}, we study the decay and Strichartz estimates associated with the Schr\"odinger operator \eqref{LAa} in which both the electric and magnetic potentials are singular at origin and scaling critical. \vspace{0.2cm}

It is well-known that the decay estimates and Strichartz estimates are powerful tools to study the dispersive equations.
In this direction, there are many literatures studying the decay behavior of dispersive equations with perturbation of potentials.
Even with subcritical magnetic potentials, there were a sequel of papers (see \cite{CS,DF, DFVV, EGS1, EGS2, S} and the references therein) in which time-decay or Strichartz estimates are studied. For scaling critical purely electric potential,
 the pioneer results are due to Burq, Planchon, Stalker, and Tahvildar-Zadeh  \cite{BPSS, BPST}, in which they proved the validity of Strichartz estimates for the Schr\"odinger and wave equations, in space dimension $n\geq2$. Later, in \cite{FFFP, FFFP1}, Fanelli, Felli, Fontelos, and Primo studied the validity of the time-decay estimate for the Schr\"odinger equation associated with the operator  \eqref{LAa}. For examples, in \cite{FFFP}, they proved the time-decay estimate
\begin{equation}\label{eq:decayshro}
\|e^{it\LL_{{\A},a}}\|_{L^1(\R^2)\to L^\infty(\R^2)}\lesssim |t|^{-1}
\end{equation}
for the Schr\"odinger equation provided \eqref{eq:transversal} and \eqref{equ:condassa} hold.  The Strichartz estimates for $e^{it\LL_{{\A},a}}$ are  consequences of \eqref{eq:decayshro} and the usual Keel-Tao argument \cite{KT}. It is known that ${\A}\sim \/|x|$ is critical for the validity of Strichartz estimates, as proved e.g. in \cite{FG} in the case of the Schr\"odinger equation.
However, the argument in \cite{FFFP, FFFP1} breaks down for wave equation due to the lack of  pseudoconformal invariance (which was used for Schr\"odinger equation). Very recently, Fanelli and the last two authors \cite{FZZ}
established the Strichartz estimate for wave equation by constructing the propagator $\sin(t\sqrt{\mathcal L_{{\A},0}})/\sqrt{\mathcal L_{{\A},0}}$ (based on \emph{Lipschitz-Hankel integral formula}) and showing the local smoothing estimates. However, the method in \cite{FFFP, FFFP1,FZZ} can not be applied to neither half wave propagator $e^{it\sqrt{\LL_{{\A},0}}}$ nor Klein-Gordon evolution. In particular, in the survey \cite{Fanelli}, Fanelli raised open problems about the dispersive estimate for other equations.  The purpose of this paper is to develop the spectral measure to further study
the time decay and Strichartz estimates for dispersive equations. \vspace{0.2cm}

\subsection{Main results}Now we state our main results.  To state our results, let us introduce some preliminary notations. In the following, the Sobolev spaces will be denoted by
\begin{align}\label{def:sobolev}
& \dot H^{s}_{{\A},a}(\R^2):=\mathcal L_{{\A},a}^{-\frac s2}L^2(\R^2),
\qquad
\dot H^s(\R^2):=\dot H^s_{0,0}(\R^2),
\\
&
H^s_{{\A},a}(\R^2) :=L^2(\R^2)\cap\dot H^{s}_{{\A},a}(\R^2),
\qquad
H^s(\R^2):= H^s_{0,0}(\R^2).
\nonumber
\end{align}
We stress that $H_{{\A},a}^1(\R^2)\subset H^1(\R^2)$, and the inclusion is strict, because of the non-integrable singularities of the potentials (see \cite[cf. Lemma 23 - (ii)]{FFT} for details).
Analogously, we define the distorted Besov spaces as follows.  Let $\phi\in C_c^\infty(\mathbb{R}\setminus\{0\})$, with $0\leq\phi\leq 1$, $\text{supp}\,\phi\subset[1/2,1]$, and
\begin{equation}\label{dp}
\sum_{j\in\Z}\phi(2^{-j}\lambda)=1,\quad \phi_j(\lambda):=\phi(2^{-j}\lambda), \, j\in\Z,\quad \varphi_0(\lambda):=\sum_{j\leq0}\phi(2^{-j}\lambda).
\end{equation}

\begin{definition}[Magnetic Besov spaces associated with $\mathcal{L}_{{\A},0}$] For $s\in\R$ and $1\leq p,r<\infty$, the norms of $\|\cdot\|_{\dot{\mathcal{B}}^s_{p,r,\A}(\R^2)}$ and
$\|\cdot\|_{\mathcal{B}^s_{p,r,\A}(\R^2)}$ are given respectively by
\begin{equation}\label{Besov}
\|f\|_{\dot{\mathcal{B}}^s_{p,r,\A}(\R^2)}=\Big(\sum_{j\in\Z}2^{jsr}\|\phi_j(\sqrt{\LL_{{\A},0}})f\|_{L^p(\R^2)}^r\Big)^{1/r},
\end{equation}
and
\begin{equation}\label{Besov'}
\|f\|_{\mathcal{B}^s_{p,r,\A}(\R^2)}=\Big(\|\varphi_0(\sqrt{\LL_{{\A},0}})f\|_{L^p(\R^2)}^r+\sum_{j=1}^\infty 2^{jsr}\|\phi_j(\sqrt{\LL_{{\A},0}})f\|_{L^p(\R^2)}^r\Big)^{1/r}.
\end{equation}
In particular, for $p=r=2$, we have
\begin{equation}\label{Sobolev}
\|f\|_{H^s_{{\A},0}(\R^2)}:=\big\|(1+\mathcal L_{{\A},0})^{\frac s2}f\big\|_{L^2(\R^2)}=\|f\|_{\mathcal{B}^s_{2,2,\A}(\R^2)}.
\end{equation}
\end{definition}
\noindent
The first result about the time decay property for the propagators is the following theorem:
\begin{theorem}\label{thm:dispersive} Let  $\LL_{{\A},0}$ be  in \eqref{LAa} with  $a\equiv0$ where ${\A}\in W^{1,\infty}(\mathbb{S}^{1},\mathbb{R}^2)$ satisfying \eqref{eq:transversal}
such that $\Phi_{\A}\notin\Z$ given in \eqref{equ:defphia1}.
Then there exists a constant $C>0$ such that:

$\bullet$ for Schr\"odinger flow,
\begin{equation}\label{dis-S}
\|e^{it \mathcal L_{{\A},0}}f\|_{L^\infty(\R^2)}\leq C |t|^{-1}\|f\|_{L^1(\R^2)};
\end{equation}

$\bullet$ for half-wave flow,
\begin{equation}\label{dis-w}
\|e^{it\sqrt{\mathcal L_{{\A},0}}}f\|_{L^\infty(\R^2)}\leq C |t|^{-1/2}\|f\|_{\dot{\mathcal{B}}^{3/2}_{1,1,{\A}}(\R^2)};
\end{equation}

$\bullet$ for Klein-Gordon flow,
\begin{equation}\label{dis-kg}
\Big\|e^{it\sqrt{1+\mathcal L_{{\A},0}}}f\Big\|_{L^\infty(\R^2)}\leq C |t|^{-1/2}\|f\|_{\mathcal{B}^{1/2}_{1,1,{\A}}(\R^2)}.
\end{equation}
\end{theorem}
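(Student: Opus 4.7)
The plan is to derive all three estimates from a single resource: the explicit Schwartz kernel of the spectral measure $dE_{\sqrt{\mathcal L_{\A,0}}}(\lambda)$ constructed earlier in the paper. Via the functional calculus,
$$
e^{itH(\sqrt{\mathcal L_{\A,0}})}f(x)=\int_0^\infty e^{itH(\lambda)}\,dE_{\sqrt{\mathcal L_{\A,0}}}(\lambda)f(x),\qquad H(\lambda)\in\{\lambda^2,\;\lambda,\;\sqrt{1+\lambda^2}\},
$$
each claim reduces to an oscillatory-integral bound in $\lambda$ for a kernel $K(\lambda;x,y)$ which, after separating the angular and radial variables, takes the form of a sum of products $J_{\nu_k}(\lambda|x|)J_{\nu_k}(\lambda|y|)$ with magnetically shifted orders $\nu_k=|k-\Phi_{\A}|$ weighted by an angular phase. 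The key pointwise estimate to extract, uniformly in $\Phi_{\A}$, is $|K(\lambda;x,y)|\lesssim \lambda(1+\lambda|x-y|)^{-1/2}$, together with symbol-type derivative bounds in $\lambda$ needed for integration by parts.

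For the Schr\"odinger estimate \eqref{dis-S} I would follow the Lipschitz--Hankel line of \cite{FFFP,FZZ}: the integral $\int_0^\infty e^{it\lambda^2}J_\nu(\lambda r)J_\nu(\lambda r')\lambda\,d\lambda$ admits a closed form in terms of a Gaussian and a modified Bessel function $I_\nu$, and resumming the angular Fourier series recovers Schulman's explicit Aharonov--Bohm propagator, whose modulus is bounded by $C|t|^{-1}$ uniformly in the spatial variables. No frequency localization is needed, since the Schr\"odinger phase is quadratic and the closed form is exact.

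For the half-wave \eqref{dis-w} and Klein--Gordon \eqref{dis-kg} estimates I would Littlewood--Paley decompose, $1=\sum_j\phi_j(\sqrt{\mathcal L_{\A,0}})$, and reduce to frequency-localized dispersive bounds
$$
\|e^{itH(\sqrt{\mathcal L_{\A,0}})}\phi_j(\sqrt{\mathcal L_{\A,0}})f\|_{L^\infty(\R^2)}\lesssim |t|^{-1/2}\,2^{js}\,\|\phi_j(\sqrt{\mathcal L_{\A,0}})f\|_{L^1(\R^2)},
$$
with $s=3/2$ for the half-wave and $s=1/2$ for Klein--Gordon at high frequency (the low-frequency contribution to \eqref{dis-kg} is handled separately via the Schr\"odinger-type curvature $H''(\lambda)\sim 1$ for $\lambda\lesssim 1$, reusing the Schr\"odinger step). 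Each frequency-localized bound is obtained by inserting the large-argument Bessel asymptotic $J_\nu(z)\sim(2/\pi z)^{1/2}\cos(z-\nu\pi/2-\pi/4)$ and applying stationary phase to
$$
\int_0^\infty e^{i(tH(\lambda)\pm\lambda|x-y|)}\,\tilde\phi_j(\lambda)\,a_j(\lambda;x,y)\,d\lambda,
$$
supplemented by direct estimates in the small-argument Bessel regime $\lambda|x-y|\lesssim 1$; summation of the dyadic pieces in $\ell^1$ then converts the frequency loss $2^{js}$ into precisely the Besov norms on the right-hand side of \eqref{dis-w} and \eqref{dis-kg}.

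The main obstacle I anticipate is uniform control of the angular series: since the orders $\nu_k=|k-\Phi_{\A}|$ grow linearly in $|k|$, the Bessel bounds must be quantitative enough in $\nu$ for $\sum_k$ to converge and reassemble into the clean $(1+\lambda|x-y|)^{-1/2}$ envelope required for the oscillatory analysis in $\lambda$. The hypothesis $\Phi_{\A}\notin\Z$ enters essentially here, guaranteeing the spectral gap $\min_k|k-\Phi_{\A}|>0$ that both underpins the Hardy inequality \eqref{equ:ghardyLW} and prevents any angular mode from producing a resonant contribution at the origin.
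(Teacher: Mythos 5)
Your Schr\"odinger argument matches the paper's: Weber's formula gives the closed form $t^{-1}e^{-(r_1^2+r_2^2)/4it}I_{\nu_k}(r_1r_2/2it)$ for each angular mode, the integral representation of $I_\nu$ lets one resum the angular series by Poisson summation, and the resulting Schulman-type kernel is $O(|t|^{-1})$ pointwise. The overall architecture you lay out for the half-wave and Klein--Gordon estimates (spectral theorem representation, Littlewood--Paley, Van der Corput/stationary phase, $\ell^1$-summation into Besov norms, with a separate treatment of low frequency for Klein--Gordon where the curvature is Schr\"odinger-like) also matches the paper's plan.

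However, there is a genuine gap in how you propose to reach the key pointwise envelope $|dE_{\sqrt{\mathcal L_{\A,0}}}(\lambda;x,y)|\lesssim \lambda(1+\lambda|x-y|)^{-1/2}$ (with symbol-type $\lambda$-derivative bounds) for the wave and Klein--Gordon cases. You plan to insert the large-argument asymptotic $J_\nu(z)\sim(2/\pi z)^{1/2}\cos(z-\nu\pi/2-\pi/4)$ into the angular series $\sum_k \varphi_k(\theta_1)\overline{\varphi_k(\theta_2)}J_{\nu_k}(\lambda r_1)J_{\nu_k}(\lambda r_2)$ term by term and then sum. But that asymptotic is only valid for $z\gg\nu$; for $z\lesssim\nu$ one is in the Airy transition or the exponentially small regime, and since $\nu_k=|k-\Phi_{\A}|$ grows linearly, for any fixed $(\lambda,x,y)$ all but finitely many modes live in the ``bad'' regime. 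Establishing uniform $\nu$-dependent remainders, summing them, and recovering the clean $(1+\lambda|x-y|)^{-1/2}$ decay is a substantial project -- you flag this as ``the main obstacle I anticipate'' but do not give a mechanism to overcome it. The paper avoids this entirely by a different route: having the Schr\"odinger kernel in \emph{closed form} (with the angular series already resummed into the coefficients $A_\alpha(\theta_1,\theta_2)$ and $B_\alpha(s,\theta_1,\theta_2)$ of the ``geometry'' and ``diffractive'' terms), it obtains the outgoing/incoming resolvents by Fourier--Laplace transforming the Schr\"odinger propagator in $t$, and then the spectral measure by Stone's formula. The result is an expression of the spectral measure kernel as $A_\alpha(\theta_1,\theta_2)$ (resp.\ $B_\alpha$) times the Fourier transform of the arc-length measure on $\mathbb S^1$ evaluated at $\lambda|x-y|$ (resp.\ $\lambda d_s$), whose $(1+\lambda r)^{-1/2}$ decay and symbol-type derivative bounds are standard; no uniform-in-$\nu$ Bessel analysis is ever needed. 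The idea you are missing is precisely to \emph{feed the Schr\"odinger closed form forward} (propagator $\to$ resolvent $\to$ spectral measure) rather than return to the unsummed Bessel series for the wave/Klein--Gordon pieces. Once the spectral measure kernel is in the paper's form, your dyadic/Van-der-Corput analysis and Besov bookkeeping go through, with the minor extra observation that $B_\alpha(\cdot,\theta_1,\theta_2)\in L^1_s$ uniformly and $d_s\geq |x-y|$, so the $s$-integral in the diffractive term is harmless.
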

\begin{remark}
The estimate \eqref{dis-S} for the Schr\"odinger equation was proved in \cite{FFFP}, we provide a new proof when electric potential vanishes.
The estimates \eqref{dis-w} and \eqref{dis-kg} are new, at our knowledges.
 \end{remark}

\begin{remark} Inspired by \cite{BFM, CT1},  Fanelli and the last two authors in \cite{FZZ} proved
\begin{equation}\label{dis-w-sin}
\Big\|\frac{\sin(t\sqrt{\mathcal L_{{\A},0}})}{\sqrt{\mathcal L_{{\A},0}}}f\Big\|_{L^\infty(\R^2)}\leq C |t|^{-1/2}\|f\|_{\dot{\mathcal{B}}^{1/2}_{1,1,{\A}}(\R^2)}.
\end{equation}
However, the argument there heavily depends on Lipschitz-Hankel integral formula which does not work for $e^{it\sqrt{\mathcal L_{{\A},0}}}$.
For this reason, we need to use a new strategy developing spectral measure to prove Theorem \ref{thm:dispersive}.
\end{remark}

 \begin{remark}
The assumption $\Phi_{\A}\notin\Z$ is natural. Indeed, if $\Phi_{\A}\in\Z$, by unitary equivalence, the model is same as ${\A}\equiv0$. Hence when $a\equiv0$, the model has
no potential which is trivial.
 \end{remark}

 \begin{remark}
A typical example of ${\bf A}$ is the {\it Aharonov-Bohm} potential
\begin{equation}\label{ab-potential}
{\A}(\hat{x})=\alpha\Big(-\frac{x_2}{|x|},\frac{x_1}{|x|}\Big),\quad \alpha\in\R,
\end{equation}
introduced in the physical literature \cite{AB59}, see also \cite{PT89}.

\end{remark}

\noindent
Using the above dispersive estimates, we can prove Strichartz estimates. Since the Strichartz estimates for Schr\"odinger and wave have been proved in \cite{FFFP,FZZ}, we focus on the Klein-Gordon equations for which the methods in  \cite{FFFP,FZZ} can not be applied.
\vspace{0.2cm}

 In the flat Euclidean space, the free Klein-Gordon equation reads
 \begin{equation}\label{equ:E-KG}
\begin{cases}
\partial_{t}^2u-\Delta u+m^2u=0, \quad (t,x)\in I\times\R^2; \\ u(0)=f(x),
~\partial_tu(0)=g(x).
\end{cases}
\end{equation}
It is well known by \cite{B, GV, KT} that there exists a constant $C>0$ such that
\begin{equation*}
\begin{split}
&\|u(t,x)\|_{L^q_t(I;L^p_x(\R^2))}\leq C \big(\|f\|_{ H^s(\R^2)}+\|g\|_{
H^{s-1}(\R^2)}\big),
\end{split}
\end{equation*}
where $H^s(\R^2)$ is the usual Sobolev space, and the pairs $(q,p)\in \Lambda_{s,\eta}$ with $0\leq\eta\leq1$ (the set $\Lambda_{s,\eta}$ is given in Definition \ref{ad-pair} below).

 \vspace{0.2cm}

\begin{definition}\label{ad-pair}
For $0\leq\eta\leq1$,  we say that a couple $(q,p)\in [2,\infty]\times [2,\infty)$ is admissible,   if $(q,p)$ satisfies
\begin{equation}\label{adm}
\frac{2}q+\frac{1+\eta}p\leq\frac{1+\eta}2.
\end{equation}
For $s\in\R$, we denote $(q,p)\in \Lambda_{s,\eta}$ if $(q,p)$ is admissible and satisfies
\begin{equation}\label{scaling}
\frac1q+\frac {2+\eta}p=\frac {2+\eta}2-s.
\end{equation}

\end{definition}

Throughout this paper, pairs of conjugate indices will be written as $p, p'$, meaning that $\frac{1}p+\frac1{p'}=1$ with $1\leq p\leq\infty$. \vspace{0.2cm}

\begin{theorem}[Global-in-time Strichartz estimate]\label{thm:Stri} Let  $\LL_{{\A},a}$ be as in \eqref{LAa} where $a\in W^{1,\infty}(\mathbb{S}^{1},\mathbb{R})$  and ${\A}\in W^{1,\infty}(\mathbb{S}^{1},\mathbb{R}^2)$ satisfy \eqref{eq:transversal}
and  \eqref{equ:condassa}.  Suppose that $u$ is the
solution to the Cauchy problem
\begin{equation}\label{equ:KG}
\begin{cases}
\partial_{t}^2u+\LL_{{\A},a} u+u=F(t,x), \quad (t,x)\in I\times \R^2; \\ u(0)=u_0(x),
~\partial_tu(0)=u_1(x),
\end{cases}
\end{equation}
for some initial data $(u_0,u_1)\in  H_{{\A},a}^{s}(\R^2)\times H_{{\A},a}^{s-1}(\R^2)$, and
the time interval $I\subseteq\R$, then
\begin{equation}\label{stri}
\begin{split}
&\|u(t,x)\|_{L^q_t(I;L^p_x(\R^2))}+\|u(t,x)\|_{C(I; H_{{\A},a}^s(\R^2))}\\
&\qquad\lesssim \|u_0\|_{ H_{{\A},a}^s(\R^2)}+\|u_1\|_{
H_{{\A},a}^{s-1}(\R^2)}+\|F\|_{L^{\tilde{q}'}_t(I;L^{\tilde{p}'}_x(\R^2))},
\end{split}
\end{equation}
where the pairs $(q,p), (\tilde{q},\tilde{p})\in \Lambda_{s,\eta}$
 with $0\leq\eta\leq1$ and $0\leq s<1$. Furthermore, if either $\Phi_{\A}\not\in\frac12\Z$ or $a(\theta)=a(\cos\theta,\sin\theta)$ is symmetric at $\theta=\pi$, then \eqref{stri} still holds
 for $s\in[1,\frac{2+\eta}2)$.

\end{theorem}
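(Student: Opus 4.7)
The plan is to combine the dispersive estimate \eqref{dis-kg} with the Keel--Tao abstract Strichartz machinery and a Littlewood--Paley decomposition adapted to $\sqrt{\LL_{{\A},0}}$, and then transfer the bounds to the full operator $\LL_{{\A},a}$ via an equivalence between $H^s_{{\A},a}$ and $H^s_{{\A},0}$.

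\textbf{Homogeneous estimate for $a\equiv 0$.} Freezing $a\equiv 0$, I dyadically project \eqref{dis-kg} via $\phi_j(\sqrt{\LL_{{\A},0}})$, which yields the single-shell bound
\begin{equation*}
\bigl\|\phi_j(\sqrt{\LL_{{\A},0}})\, e^{it\sqrt{1+\LL_{{\A},0}}}f\bigr\|_{L^\infty(\R^2)}\lesssim (1+2^j)^{1/2}\,|t|^{-1/2}\,\bigl\|\phi_j(\sqrt{\LL_{{\A},0}})f\bigr\|_{L^1(\R^2)}.
\end{equation*}
Combined with the unitary $L^2$ bound from the functional calculus, the Keel--Tao lemma gives a frequency-localized Strichartz inequality for each admissible pair $(q,p)\in\Lambda_{s,\eta}$. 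Summing in $j$ through a Littlewood--Paley square-function estimate for $\sqrt{\LL_{{\A},0}}$ --- available in $L^p$, $1<p<\infty$, thanks to the Gaussian bound on the heat kernel established earlier in the paper --- produces the Strichartz estimate for $e^{\pm it\sqrt{1+\LL_{{\A},0}}}$, and hence for the cosine/sine propagators of \eqref{equ:KG}.

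\textbf{Transfer to $\LL_{{\A},a}$.} The step is to establish
\begin{equation*}
\|f\|_{H^s_{{\A},a}(\R^2)}\simeq \|f\|_{H^s_{{\A},0}(\R^2)}.
\end{equation*}
This is trivial at $s=0$, holds at $s=1$ by the Hardy-type inequality \eqref{equ:ghardyLW} and the coercivity condition \eqref{equ:condassa}, and extends to $0\le s\le 1$ by complex interpolation. For $s\in[1,(2+\eta)/2)$, the same equivalence is proved by decomposing into the angular eigenbasis of the spherical part of $\LL_{{\A},a}$ on $\mathbb S^1$ and comparing the resulting Bessel indices with those for $a\equiv 0$; the hypothesis $\Phi_{\A}\notin\tfrac12\Z$, or the reflection symmetry of $a$ at $\theta=\pi$, is exactly what prevents resonant coupling between angular modes and keeps the comparison quantitative.

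\textbf{Inhomogeneous estimate and continuity.} The $F$-term is then dealt with via Duhamel's formula together with the Christ--Kiselev lemma for off-diagonal admissible pairs, while any endpoint case is handled by Keel--Tao's bilinear argument; continuity in $H^s_{{\A},a}$ follows from energy conservation and density. The main obstacle is the Sobolev space comparison at $s\ge 1$: because $a(\hat x)/|x|^{2}$ is scaling critical, the domains of the fractional powers of $\LL_{{\A},a}$ and $\LL_{{\A},0}$ generically differ, and it is precisely the extra hypothesis on $\Phi_{\A}$ or the angular symmetry of $a$ that rules out this mismatch and permits the propagation of the bounds of the first step across the regularity threshold $s=1$.
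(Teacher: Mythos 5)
The central step of your ``transfer'' argument does not work: Sobolev-norm equivalence $\|f\|_{H^s_{{\A},a}}\simeq\|f\|_{H^s_{{\A},0}}$ relates the domains of fractional powers of the two operators, but it does not relate the unitary groups $e^{it\sqrt{1+\LL_{{\A},a}}}$ and $e^{it\sqrt{1+\LL_{{\A},0}}}$. A Strichartz estimate is a mixed space--time bound for the evolution itself, and two self-adjoint operators with comparable quadratic form domains can have dramatically different dispersive behavior. So knowing the Strichartz estimates for $\LL_{{\A},0}$ together with the norm equivalence (which the paper does record, for $s\in[-1,1]$) does not give the Strichartz estimates for $\LL_{{\A},a}$. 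The paper instead treats $a(\hat x)/|x|^2$ as a source term via Duhamel's formula,
\begin{equation*}
u_h(t)=\cos\bigl(t\sqrt{1+\LL_{{\A},0}}\bigr)u_{0,h}+\int_0^t\frac{\sin\bigl((t-\tau)\sqrt{1+\LL_{{\A},0}}\bigr)}{\sqrt{1+\LL_{{\A},0}}}\Bigl(\frac{a(\hat x)}{|x|^2}u_h(\tau)\Bigr)\,d\tau,
\end{equation*}
and controls the integral by combining the $\LL_{{\A},0}$-Strichartz estimates with local smoothing estimates of the form $\|r^{-\beta}e^{it\sqrt{1+\LL_{{\A},a}}}f\|_{L^2_{t,x}}\lesssim\|f\|_{H^{\beta-1/2}_{{\A},a}}$ (Proposition \ref{prop:lse}), split into high and low angular modes of $L_{{\A},a}$ on $\mathbb{S}^1$, with the low angular modes handled separately by Hankel-transform arguments in the style of Planchon--Stalker--Tahvildar-Zadeh. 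None of this machinery appears in your proposal, and without it there is no way to pass from $\LL_{{\A},0}$ to $\LL_{{\A},a}$.

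Two further points. First, the homogeneous step is incomplete: projecting \eqref{dis-kg} onto a single dyadic shell only yields the $|t|^{-1/2}$ wave-type decay $(1+2^j)^{1/2}|t|^{-1/2}$, and feeding that into Keel--Tao only produces the $\eta=0$ (wave) admissible pairs. To obtain the full set $\Lambda_{s,\eta}$ for all $\eta\in[0,1]$, the paper needs the refined frequency-localized estimates of Propositions \ref{dispersive-l} and \ref{dispersive-h}: a Schr\"odinger-type $(1+|t|)^{-1}$ decay at low frequency and a semiclassical bound $2^{k\frac{3+\eta}{2}}(2^{-k}+|t|)^{-\frac{1+\eta}{2}}$ at high frequency, which are then inserted into the $h$-dependent abstract lemma (Proposition \ref{prop:semi}). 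Your single-shell bound cannot produce the $\eta>0$ range. Second, the role of the hypothesis $\Phi_{\A}\notin\tfrac12\Z$ or angular symmetry of $a$ is misidentified. In the paper it is not used for Sobolev-domain comparison; it is the precise condition under which the heat kernel $e^{-t\LL_{{\A},a}}$ admits a Gaussian upper bound (Proposition \ref{prop:heat}), which yields the Hardy--Littlewood--Sobolev-type inequality (Proposition \ref{prop:sob}), and it is that Sobolev inequality that extends the Strichartz estimate from $s<1$ to $s\in[1,\tfrac{2+\eta}{2})$ by lowering the Lebesgue exponent.
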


\begin{remark} This is a generalization of Strichartz estimates for wave and Schr\"odinger equations proved in \cite{FFFP, FZZ}. The admissible pairs in $\Lambda_{s,\eta}$ match the wave ones when $\eta=0$ and the Schr\"odinger ones when $\eta=1$.
Since $(q,p)\in \Lambda_{s,\eta}$, if $\Lambda_{s,\eta}$ is not empty, then one has
\begin{equation}\label{s:index}
\frac{2+\eta}2>s=(2+\eta)\Big(\frac12-\frac1p\Big)-\frac1q\geq\frac12(3+\eta)\Big(\frac12-\frac1p\Big)\geq0,
\end{equation}
which is wider than $1>s\geq0$ corresponding to wave \cite{FZZ}. To recover $s\in[1,\frac{2+\eta}2)$, we show the Sobolev embedding inequality
\begin{equation}\label{sob}
\|f\|_{L^q(\R^2)}\leq C\|\LL^{\frac \sigma 2}_{{\A},a}f\|_{L^p(\R^2)},
\end{equation}
where $\sigma=2(\frac1p-\frac1q)\geq0$ which generalizes \cite[Lemma 2.4]{FZZ} to $p\neq 2$.
The Sobolev inequality \eqref{sob} will be proved by establishing the Gaussian boundedness of the heat kernel $e^{-t\LL_{{\A},a}}$
under the assumption that either $\Phi_{\A}\not\in\frac12\Z$ or $a(\theta)$ is symmetric at $\theta=\pi$, i.e. $a(\pi-\theta)=a(\pi+\theta)$ for $\theta\in[0,\pi]$.
In particular, the assumption is satisfied when $a(\theta)\equiv c$ is constant or $a(\theta)=a(\cos\theta)$ which is known as a zonal potential \cite{Gur}
and ``quantum spherical pendulum" in the terminology of \cite{CD}.

\end{remark}

\begin{remark} Due to the magnetic effective, from \eqref{equ:condassa}, it allows negative electric potential in dimension $n=2$.
The result for the 2D model here is different from the result for purely inverse-square electric potential proved in \cite{BPSS, BPST, LSS}.
 In \cite{BPSS},  Burq, Planchon, Stalker, and Tahvildar-Zadeh proved the Strichartz estimates for wave and Schr\"odinger with the purely electric potential
$a|x|^{-2}$ where the constant $a>-(n-2)^2/4$ and $n\geq2$; later in  \cite{BPST}, they generalized the potential to $a(\hat{x})|x|^{-2}$ but with $n\geq3$.
In \cite{LSS},  Lee, Seo and Seok proved the Strichartz estimates (missing endpoint $q=2$) for Klein-Gordon equation with ``small" Fefferman-Phong potentials
in dimension $n\geq3$ which covers the inverse-square potential $a|x|^{-2}$
with small enough constant $|a|$.

\end{remark}

\noindent
\begin{remark}\label{rem:loccount}
The proof of Theorem \ref{thm:Stri} is inspired by the perturbation arguments in \cite{BPST, BPSS}. Nevertheless, to treat $\mathcal L_{{\A},a}$ as a perturbation of $-\Delta$, as in that case, involving local smoothing estimates, would require an estimate like
$$
\big\| |x|^{-\frac12}e^{it\sqrt{1+\LL_{{\A},a}}}f\big\|_{L_{t,x}^2(\R\times\R^2)}\leq \|f\|_{L^2(\R^2)},
$$
to handle the long range perturbation from the magnetic potential. Unfortunately, this estimate is known to be false, even in the free case, by the standard Agmon-H\"ormander Theory.
One can only reach the weight $|x|^{-1/2-\varepsilon}$, see \cite{CF, CF1}.
To overcome this difficulty, we treat $\mathcal L_{{\A},a}$ as an electric perturbation of the purely magnetic operator $\mathcal L_{{\A},0}$ which
leads us to study the spectral measure of $\mathcal L_{{\A},0}$.

\end{remark}

\vspace{0.2cm}

\subsection{Strategy of the proof} Our argument here extend the dispersive and Strichartz estimates of Schr\"odinger \cite{FFFP} and wave \cite{FZZ} to Klein-Gordon equation by constructing the Schwartz kernels of spectral measure. In those papers, the Schr\"odinger propagator $e^{it\LL_{{\A},a}}$ was constructed by using \emph{pseudo-conformal invariance}
and the propagator $\sin(t\sqrt{\mathcal L_{{\A},0}})/\sqrt{\mathcal L_{{\A},0}}$ was showed by using \emph{Lipschitz-Hankel integral formula}, respectively. The current paper,
in the spirit of \cite{HZ, Zhang, ZZ1, ZZ2}, is to express
the propagators by using the spectral measure $d E_{\sqrt{\LL_{{\A},0}}}(\lambda)$ and developing the decay and oscillation properties of the spectral measure.

After expressing the propagator in terms of an integral of the multiplier $e^{it\sqrt{1+\lambda^2}}$ against the spectral measure, our strategy is to use the stationary phase.
 It is known that the Klein-Gordon likes wave at high frequency and likes Schr\"odinger at low frequency, then we combine
\cite{HZ} and \cite{Zhang}, as did in \cite{ZZ3},  to show the localized dispersive estimates. Hence the argument of abstract Strichartz estimate proved in \cite{KT} implies
the Strichartz estimates for Klein-Gordon with purely magnetic potential.  Following \cite{BPST}, we use the local smoothing estimate proved in \cite{CYZ} to treat the perturbation of
the inverse-square potential in Theorem \ref{thm:Stri}. However, it is not enough to cover $(q,p)\in \Lambda_{s,\eta}$ when $s\in [1, (2+\eta)/2)$. To recover this issue,
we prove the heat kernel estimate and Sobolev embedding associated with the operator $\LL_{{\A},a}$. It has been seen the spectral measure and heat kernel are
 two key points in the proof, we give more details about them below.\vspace{0.2cm}

It is known that the Schwartz kernels of the resolvent and spectral measure for Schr\"odinger operator (on manifolds or with potentials)  are the cornerstones of harmonic analysis problems which we plan to investigate in future.
For example, the Schwartz kernels of the resolvent and spectral measure associated with the Schr\"odinger operators in conical singular spaces
have been systematically studied in Hassell-Vasy \cite{HV1,HV2} and Guillarmou-Hassell-Sikora \cite{GHS1, GHS2}. And then the kernels were used to
 study the resolvent estimate in
Guillarmou-Hassell \cite{GH} and
the Strichartz
estimates in Hassell-Zhang \cite{HZ}. Due to the generality and complexity of the geometry, the argument heavily depends on the powerful microlocal strategy developed by Melrose \cite{Melrose1,Melrose2,Melrose3}, and the method is not standard at least for non-micrololcal readers.\vspace{0.2cm}

Very recently, in \cite{Z}, the third author  explicitly constructed the kernels of the resolvent and spectral measure on the flat metric cone, a simple conical singular space.
In contrast to the Laplacian on flat cone, the Schr\"odinger operator \eqref{LAa} here is perturbed by magnetic Aharonov-Bohm potentials, and however has same conical singularity.
Inspired by \cite{Z}, we construct the kernel of the spectral measure of operator \eqref{LAa} when $a=0$. The result about the spectral measure is the following.

\begin{proposition}[Spectral measure kernel]\label{prop:spect}
Let the operator $\LL_{{\A},0}$ be as in Theorem \ref{thm:dispersive}, and let $x=r_1(\cos\theta_1, \sin\theta_1)$ and $y=r_2(\cos\theta_2, \sin\theta_2)$ in $\R^2\setminus\{0\}$. Define
\begin{equation}\label{d-j}
d(r_1,r_2,\theta_1,\theta_2)=\sqrt{r_1^2+r_2^2-2r_1r_2\cos(\theta_1-\theta_2)}=|x-y|,
\end{equation}
and
\begin{equation} \label{d-s}
d_s(r_1,r_2,\theta_1,\theta_2)=\sqrt{r_1^2+r_2^2+2  r_1r_2\, \cosh s},\quad s\in [0,+\infty).
\end{equation}
Then the Schwartz kernel of the spectral measure  $dE_{\sqrt{\mathcal L_{{\A},0}}}(\lambda; x,y)$  can be written as
the sum of the  geometry term
\begin{equation}\label{spect-g}
\begin{split}
\frac{\lambda}{\pi} \sum_{\pm}\Big(a_\pm(\lambda d(r_1,r_2,\theta_1,\theta_2))e^{\pm i\lambda d(r_1,r_2,\theta_1,\theta_2)} A_{\alpha}(\theta_1,\theta_2)\Big)
\end{split}
\end{equation}
and the diffractive term
\begin{equation}\label{spect-d}
\begin{split}
\frac{\lambda}{\pi} \sum_\pm \int_0^\infty a_\pm(\lambda d_s(r_1,r_2,\theta_1,\theta_2))e^{\pm i\lambda d_s(r_1,r_2,\theta_1,\theta_2)}
B_{\alpha}(s,\theta_1,\theta_2) ds,
\end{split}
\end{equation}
where
\begin{equation}\label{A-al}
A_{\alpha}(\theta_1,\theta_2)= \frac{e^{i\int_{\theta_1}^{\theta_2}\alpha(\theta')d\theta'}}{4\pi^2}\big(\mathbbm{1}_{[0,\pi]}(|\theta_1-\theta_2|)
  +e^{-i2\pi\alpha}\mathbbm{1}_{[\pi,2\pi]}(|\theta_1-\theta_2|)\big)
  \end{equation}
 and
\begin{equation}\label{B-al}
\begin{split}
&B_{\alpha}(s,\theta_1,\theta_2)= -\frac{1}{4\pi^2}e^{-i\alpha(\theta_1-\theta_2)+i\int_{\theta_2}^{\theta_{1}} \alpha(\theta') d\theta'}  \Big(\sin(|\alpha|\pi)e^{-|\alpha|s}\\
&\qquad +\sin(\alpha\pi)\frac{(e^{-s}-\cos(\theta_1-\theta_2+\pi))\sinh(\alpha s)-i\sin(\theta_1-\theta_2+\pi)\cosh(\alpha s)}{\cosh(s)-\cos(\theta_1-\theta_2+\pi)}\Big)
 \end{split}
\end{equation}
 and $a_\pm\in C^\infty([0,+\infty))$ satisfies
\begin{equation}\label{bean}
\begin{split}
| \partial_r^k a_\pm(r)|\leq C_k(1+r)^{-\frac{1}2-k},\quad k\geq 0.
\end{split}
\end{equation}
\end{proposition}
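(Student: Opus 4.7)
The plan is to obtain the spectral measure by separation of variables in polar coordinates, reduce to a Hankel-type expansion in the angular eigenfunctions of the twisted Laplacian on $S^1$, and then recombine the resulting series by a Sommerfeld/Cheeger--Taylor contour representation, in the spirit of \cite{Z} for flat cones. The transversality condition \eqref{eq:transversal} means that in polar coordinates $\A$ is purely tangential, so $\mathcal{L}_{\A,0}$ takes the form
\begin{equation*}
\mathcal{L}_{\A,0} = -\partial_r^2 - \frac{1}{r}\partial_r + \frac{1}{r^2}\bigl(-i\partial_\theta + \alpha(\theta)\bigr)^2.
\end{equation*}
The gauge transformation $u \mapsto \exp\bigl(-i\int_0^\theta \alpha(\theta')\,d\theta'\bigr)u$ conjugates the angular operator to $-\partial_\theta^2$ acting on sections satisfying the twisted periodicity $u(\theta+2\pi) = e^{-2\pi i \Phi_\A}u(\theta)$. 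This yields an orthonormal angular basis $\{\varphi_k\}_{k\in\Z}$ with eigenvalues $\nu_k^2 = (k-\Phi_\A)^2$, explicit up to the phase $\exp\bigl(i\int_0^{\theta}\alpha(\theta')d\theta'\bigr)$; this phase is what will produce the overall factor $e^{i\int_{\theta_1}^{\theta_2}\alpha\,d\theta'}$ in \eqref{A-al} and the analogous factor in \eqref{B-al}.

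Next I would use the classical functional calculus on the radial Bessel operator $-\partial_r^2 - \frac{1}{r}\partial_r + \nu^2 r^{-2}$, whose spectral measure is given by the Hankel transform of order $\nu$. This yields the formal eigenfunction expansion
\begin{equation*}
dE_{\sqrt{\mathcal{L}_{\A,0}}}(\lambda;x,y) = \lambda\sum_{k\in\Z} J_{\nu_k}(\lambda r_1)J_{\nu_k}(\lambda r_2)\,\varphi_k(\theta_1)\overline{\varphi_k(\theta_2)}\,d\lambda.
\end{equation*}
After pulling out the angular phase, the remaining series to sum is essentially $\sum_{k\in\Z}J_{|k-\Phi_\A|}(\lambda r_1)J_{|k-\Phi_\A|}(\lambda r_2)e^{ik(\theta_1-\theta_2)}$. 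This is precisely the sum treated by the Sommerfeld representation: writing $J_\nu J_\nu$ through the integral $\frac{1}{2\pi i}\int_{\Gamma} H_\nu^{(1)}(\lambda r_>)J_\nu(\lambda r_<)\,d\nu$-type contours (or equivalently via Schl\"afli's representation of $J_\nu$), the series rewrites as a contour integral whose deformation isolates (i) the poles coming from $\nu_k\in\Z$ shifts, producing the \emph{geometric} contribution supported on the Euclidean distance $d(r_1,r_2,\theta_1,\theta_2)$, and (ii) a horizontal contour along which one changes variables $\nu\mapsto s$ via $\cosh s$, producing the \emph{diffractive} contribution with integration kernel parametrized by $d_s$.

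The angular indicator $\mathbbm{1}_{[0,\pi]}+e^{-2\pi i\alpha}\mathbbm{1}_{[\pi,2\pi]}$ in \eqref{A-al} appears from matching the phase of the geometric term across $|\theta_1-\theta_2|=\pi$, reflecting the monodromy $e^{-2\pi i \Phi_\A}$ induced by the Aharonov--Bohm flux. The more intricate coefficient $B_\alpha(s,\theta_1,\theta_2)$ arises from summing the residues of the integrand $\frac{e^{-\nu s}}{\,\text{(sines)}\,}$ at the shifted integer lattice, whose closed form uses the identity
\begin{equation*}
\sum_{k\in\Z}\frac{e^{ik\psi}}{k-\Phi_\A} = \text{(rational trigonometric expression in }\psi\text{ and }\Phi_\A),
\end{equation*}
and separating the contributions of $\sinh(\alpha s)$ and $\cosh(\alpha s)$ to produce the stated numerator $(e^{-s}-\cos(\theta_1-\theta_2+\pi))\sinh(\alpha s)-i\sin(\theta_1-\theta_2+\pi)\cosh(\alpha s)$ over $\cosh s-\cos(\theta_1-\theta_2+\pi)$. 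Finally, the oscillatory symbols $a_\pm$ satisfying \eqref{bean} come from the uniform Hankel asymptotic $H_\nu^{(1,2)}(r)=\sqrt{2/(\pi r)}e^{\pm i(r-\nu\pi/2-\pi/4)}(1+O(r^{-1}))$, after smoothly splitting the Bessel factor between its $\pm$ outgoing components and reinterpreting the subleading orders as a classical symbol of order $-1/2$.

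The main obstacle will be the explicit residue/contour computation that produces $B_\alpha$ in the exact form \eqref{B-al}, because the usual Cheeger--Taylor derivation for flat cones handles a single parameter $\nu\mapsto \nu/n$, whereas here one must track both the nontrivial flux $\Phi_\A$ and the angular gauge phase $\int_{\theta_2}^{\theta_1}\alpha\,d\theta'$ simultaneously, and the branching $\nu_k=|k-\Phi_\A|$ requires splitting the sum at the index closest to $\Phi_\A$. Once this closed-form summation is achieved, the estimate \eqref{bean} on $a_\pm$ and the identification of the geometric phase with $\lambda d(r_1,r_2,\theta_1,\theta_2)$ are essentially routine consequences of the Hankel asymptotic and stationary phase.
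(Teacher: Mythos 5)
Your proposal takes a genuinely different route from the paper's. The paper does not attempt to sum the Bessel series $\lambda\sum_k J_{\nu_k}(\lambda r_1)J_{\nu_k}(\lambda r_2)\varphi_k(\theta_1)\overline{\varphi_k(\theta_2)}$ directly by a Sommerfeld--Watson contour. Instead, it proceeds in three stages: (1) it first constructs the Schr\"odinger propagator kernel $e^{-it\mathcal L_{\A,0}}(x,y)$ in closed form, using the Weber integral $\int_0^\infty e^{-t\rho^2}J_\nu(r_1\rho)J_\nu(r_2\rho)\rho\,d\rho$ together with the integral representation of the modified Bessel function $I_\nu$ and the Poisson-summation identity already established in \cite[Proposition 3.1]{FZZ}; (2) it then passes to the outgoing/incoming resolvent kernels via the Laplace-transform identity $(\mathcal L_{\A,0}-(\lambda^2+i0))^{-1}=-\tfrac1i\lim_{\varepsilon\searrow 0}\int_0^\infty e^{-it\mathcal L_{\A,0}}e^{it(\lambda^2+i\varepsilon)}\,dt$, rewriting the Gaussian factors as two-dimensional Fourier integrals; (3) it finally applies Stone's formula and uses the asymptotics $\int_{\mathbb S^1}e^{-ix\cdot\omega}\,d\sigma(\omega)=\sum_\pm a_\pm(|x|)e^{\pm i|x|}$ from \cite[Theorem 1.2.1]{sogge} to identify $a_\pm$ and verify \eqref{bean}. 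In your route, the symbols $a_\pm$ would instead come from uniform Hankel asymptotics in the order parameter $\nu$, which is more delicate than the circle-measure asymptotic the paper uses.

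What each approach buys: the paper's route leverages the already-summed Schr\"odinger/heat kernel, where the residue computation that produces $A_\alpha$ and $B_\alpha$ is performed once on a series involving a single modified Bessel factor $I_\nu$, and the oscillatory structure of the spectral measure is then imported from the known asymptotics of the Fourier transform of surface measure; the $a_\pm$ symbol estimate \eqref{bean} is thus free. Your proposal would work directly at the level of the spectral measure with Bessel \emph{products} $J_\nu J_\nu$, which is closer in spirit to \cite{Z} and \cite{CT1,CT2} but requires the Watson-transformation bookkeeping to be done with two radial arguments simultaneously, and the ``$-$type contours'' you indicate are not a product representation but a sum-to-integral device; the notation as written conflates the two. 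You correctly flag the real difficulty: obtaining $B_\alpha$ exactly in the form \eqref{B-al} by residue summation with the branched order $\nu_k=|k+\alpha|$, tracking the gauge phase $\int_{\theta_2}^{\theta_1}\alpha$. The paper never faces this directly for the spectral measure; it inherits the closed-form sum from the Schr\"odinger kernel computation in \cite{FZZ}. So your strategy is sound and would give an independent derivation, but as written it omits precisely the computation the paper replaces by citation, and the passage from the Bessel product series to the contour integral needs to be set up more carefully (e.g., via the Watson transform applied to the $\theta$-series, or via Nicholson/Weber--Schafheitlin representations of $J_\nu(\lambda r_1)J_\nu(\lambda r_2)$) before the residue and horizontal-contour pieces can be identified with \eqref{spect-g} and \eqref{spect-d}.
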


\begin{remark}
The decay and oscillation properties of the representation of the spectral measure are the key points to prove the dispersive estimates in Theorem \ref{thm:dispersive}.
\end{remark}

The heat kernel estimates for Schr\"odinger operators on manifolds or with potentials have been extensively studied.
There are too many works to mention all of them here, however
we refer to \cite{MOR,Ko, LS, MS, IKO} and references therein which are closed to our model.

\begin{proposition}[Heat kernel estimate]\label{prop:heat} Let the operator $\LL_{{\A},a}$ be as in Theorem \ref{thm:Stri}.
Assume that either $\Phi_{\A}\not\in\frac12\Z$ or  $a(\theta)=a(\hat{x})$ is symmetric at $\theta=\pi$, i.e. $a(\pi-\theta)=a(\pi+\theta)$ for $\theta\in[0,\pi]$.
Then
there exist constants $c$ and $C$ such that
\begin{align}\label{est:heat}
 | e^{-t\LL_{{\A},a}}(x,y)|\leq C t^{-1}e^{-\frac{|x-y|^2}{ct}}.
\end{align}

\end{proposition}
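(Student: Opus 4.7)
My plan is to exploit the conical symmetry of $\mathcal{L}_{\A,a}$ through separation of variables in polar coordinates, reducing the Gaussian estimate to a uniform bound on an angular series of Bessel heat kernels. Writing $x = r_1(\cos\theta_1, \sin\theta_1)$, $y = r_2(\cos\theta_2,\sin\theta_2)$, and setting
\[
L_{\A,a} := (i\partial_\theta + \alpha(\theta))^2 + a(\theta),
\]
one has the splitting $\mathcal{L}_{\A,a} = -\partial_r^2 - r^{-1}\partial_r + r^{-2} L_{\A,a}$. By the Hardy inequality \eqref{equ:ghardyLW} and \eqref{equ:condassa}, the self-adjoint angular operator $L_{\A,a}$ on $L^2(\mathbb{S}^1)$ has compact resolvent, non-negative discrete spectrum $\{\mu_k^2\}_{k\in\Z}$, and an orthonormal eigenbasis $\{\varphi_k\}$. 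Combining this with the explicit heat kernel $(2t)^{-1} e^{-(r_1^2+r_2^2)/(4t)} I_\nu(r_1 r_2/(2t))$ of the radial Bessel operator $-\partial_r^2 - r^{-1}\partial_r + \nu^2/r^2$, the functional calculus yields
\[
e^{-t\mathcal{L}_{\A,a}}(x,y) = \frac{e^{-(r_1^2+r_2^2)/(4t)}}{2t} \sum_{k\in\Z} I_{\mu_k}\!\Big(\tfrac{r_1 r_2}{2t}\Big)\, \varphi_k(\theta_1)\overline{\varphi_k(\theta_2)}.
\]

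Using $|x-y|^2 = r_1^2 + r_2^2 - 2 r_1 r_2 \cos(\theta_1-\theta_2)$, the Gaussian bound reduces to showing, uniformly in $z := r_1 r_2/(2t) > 0$ and $(\theta_1, \theta_2)$, the estimate
\[
\Big|\sum_{k\in\Z} I_{\mu_k}(z)\, \varphi_k(\theta_1) \overline{\varphi_k(\theta_2)}\Big| \leq C\, e^{z \cos(\theta_1 - \theta_2)}.
\]
In the magnetic-only case $a \equiv 0$, the eigendata are explicit, $\mu_k = |k - \Phi_\A|$ and $\varphi_k(\theta) = (2\pi)^{-1/2} e^{ik\theta - i\int_0^\theta \alpha(\theta')d\theta'}$; I would evaluate the resulting series by combining the Schl\"afli representation
\[
I_\nu(z) = \frac{1}{\pi}\int_0^\pi e^{z\cos\phi} \cos(\nu\phi)\, d\phi - \frac{\sin(\nu\pi)}{\pi}\int_0^\infty e^{-z\cosh s - \nu s}\, ds
\]
with the classical Jacobi--Anger identity $e^{z\cos\psi} = \sum_{k\in\Z} I_{|k|}(z) e^{ik\psi}$, producing a principal term proportional to $e^{z\cos(\theta_1-\theta_2)}$ and a diffractive remainder structurally parallel to \eqref{spect-d}, whose integrand $e^{-z\cosh s}$ is immediately dominated by the principal term.

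For $a \not\equiv 0$ I would transfer the bound through the two alternative hypotheses. Under the parity assumption $a(\pi - \theta) = a(\pi + \theta)$, the gauge transform $f \mapsto e^{-i\int_0^\theta \alpha(\theta')d\theta'} f$ replaces $\alpha(\theta)$ by the constant $\Phi_\A$, and reflection $\theta \mapsto 2\pi - \theta$ commutes with the transformed operator, diagonalising it in parity-adapted Fourier modes whose eigenvalues $\mu_k$ interlace with $|k - \Phi_\A|$ through the min--max principle; the preceding Schl\"afli/Jacobi--Anger argument then applies within each parity sector. Under the generic hypothesis $\Phi_\A \notin \tfrac{1}{2}\Z$, no level-crossing can occur in the angular spectrum, and a perturbative comparison (justified by \eqref{equ:condassa}) combined with the monotonicity of $\nu \mapsto I_\nu(z)$ on $[0,\infty)$ for fixed $z > 0$ allows the perturbed angular sum to be dominated by its $a=0$ analogue.

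The main obstacle is securing uniformity of the angular bound in the backscattering regime $\theta_1 - \theta_2 \to \pi$, where $e^{z\cos(\theta_1-\theta_2)}$ is exponentially small in $z$: the diffractive correction coming from the $\sin(\mu_k \pi)$ factor in Schl\"afli's formula must be shown to vanish at a matching rate, and this is precisely where the symmetry/parity hypothesis prevents the half-integer flux resonances that would otherwise produce a genuine obstruction. Once this uniform angular bound is established, multiplication by the explicit Gaussian factor $e^{-(r_1^2+r_2^2)/(4t)}$ and the prefactor $(2t)^{-1}$ inherited from the Bessel heat kernel assemble into the desired bound $|e^{-t\mathcal{L}_{\A,a}}(x,y)| \leq Ct^{-1}e^{-|x-y|^2/(ct)}$.
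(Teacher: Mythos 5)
Your setup correctly mirrors the paper's: separation of variables, the Weber identity giving the angular series $\sum_k I_{\sqrt{\mu_k}}(z)\psi_k(\theta_1)\overline{\psi_k(\theta_2)}$ with $z=r_1r_2/(2t)$, the Schl\"afli representation splitting this into a ``geometric'' and a ``diffractive'' piece, and the target reduction to the uniform angular bound by $C\,e^{z\cos(\theta_1-\theta_2)}$. Your treatment of $a\equiv 0$ via Jacobi--Anger is essentially the explicit Poisson-summation computation used in the paper (drawn from its Schr\"odinger kernel construction). One additional ingredient you omit is that the paper obtains the $[|\theta_1-\theta_2|,\pi]$ restriction on the first Schl\"afli integral -- which is precisely what converts $e^z$ into $e^{z\cos(\theta_1-\theta_2)}$ -- from the finite propagation speed of $\cos(s\sqrt{L_{\A,a}})$, a step that is available for general $a$ and not only in the free case.

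The serious gap is the passage to $a\not\equiv 0$. Your proposal for $\Phi_\A\notin\tfrac12\Z$ -- a ``perturbative comparison combined with the monotonicity of $\nu\mapsto I_\nu(z)$'' to dominate the perturbed sum by its $a=0$ analogue -- cannot work, because comparing the series term-by-term destroys exactly the cancellations that produce the Gaussian factor. If you replace $|\sum_k I_{\mu_k}(z)\psi_k(\theta_1)\overline{\psi_k(\theta_2)}|$ by $\sum_k I_{|k-\Phi_\A|}(z)\sup|\psi_k|^2$, the Jacobi--Anger identity evaluates this to order $e^z$, not $e^{z\cos(\theta_1-\theta_2)}$; this is precisely the backscattering regime $|\theta_1-\theta_2|\to\pi$ that you flag as the main obstacle, and you do not resolve it. Likewise, under the parity hypothesis, an interlacing of eigenvalues from min--max gives no information about the \emph{eigenfunctions}, and the cancellations depend entirely on the eigenfunctions being asymptotically exponential in $\theta$. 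The paper's proof hinges on the detailed spectral asymptotics of Fanelli--Felli--Fontelos--Primo (quoted here as Lemma~\ref{lem:asy-eig} in the non-resonant case, and its resonant analogue with $\|R_j\|_{L^\infty}=O(|j|^{-1})$): it splits off a finite low-$k$ sum, writes the high-$k$ eigenfunctions as $e^{i(\tilde A+j)\theta}$ plus an $O(|j|^{-3})$ remainder, and then controls the resulting Fourier-type series by Poisson summation, Taylor expansion of $\cos(\sqrt{\nu_j}s)-\cos(|j_A|s)$ and $\sin(\sqrt{\nu_j}\pi)e^{-s\sqrt{\nu_j}}-\sin(|j_A|\pi)e^{-s|j_A|}$, and (in the resonant case) an extra integration by parts to gain one power of $|j|^{-1}$. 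Without this spectral asymptotic input -- or a comparable quantitative handle on both eigenvalues and eigenfunctions -- the angular bound is unproved.
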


\begin{remark} In particular $a\equiv0$, we have explicitly constructed the kernel of $e^{-t\LL_{{\A},0}}(x,y)$ in \cite{FZZ}.
Hence we proved
\begin{align}\label{equ:pcontr}
 |e^{-t\LL_{{\A},0}}(x,y)|\lesssim & t^{-1}e^{-\frac{|x-y|^2}{4t}}\quad \forall\;t>0,
\end{align}
and
\begin{align}\label{equ:pcontrder}
 \big|\tfrac{\pa}{\pa t}e^{-t\LL_{{\A},0}}(x,y)\big|\lesssim & t^{-2}e^{-\frac{|x-y|^2}{8t}}\quad \forall\;t>0.
\end{align}
The inequality \eqref{equ:pcontr} first was
proved in \cite{MOR}  is non-trivial since the components of the Aharonov-Bohm potential do not belong to $L^2_{loc}(\R^2)$.
\end{remark}

\begin{remark}
As mentioned above, typical examples potential $a(\hat{x})$ satisfying the assumption are  $a(\hat{x})\equiv c$ and $a(\hat{x})=a(\cos\theta)$ which is known as a zonal potential \cite{Gur}
and ``quantum spherical pendulum" in the terminology of \cite{CD}.
\end{remark}

\begin{remark} Following  \cite{Ko}, one can prove the boundedness of $t^{-1}(1+\frac{t}{|x||y|})^{-\sqrt{\mu_1}}$ but without Gaussian decay term, where $\mu_1$ is the first eigenvalue of $L_{A,a}$.
\end{remark}

\begin{remark} When ${\A}\equiv0$ and $a(\hat{x})\equiv c$ where the constant $c\geq-(n-2)^2/4$ and $n\geq3$,
Liskevich-Sobol \cite{LS} and Milman-Semenov \cite{MS} obtained
that there exist positive constants $C_1,C_2$ and $c_1,c_2$ such that for all
$t>0$ and all $x,y\in \R^n\setminus\{0\}$,
\begin{equation*}
C_1\big( 1\vee \tfrac{\sqrt t}{|x|} \big)^\sigma \big( 1\vee \tfrac{\sqrt t}{|y|} \big)^\sigma t^{-\frac
n2}e^{-\frac{|x-y|^2}{c_1t}}\leq e^{-t\mathcal L_{{0},c}}(x,y)\leq C_2\bigl( 1\vee \tfrac{\sqrt t}{|x|} \bigr)^\sigma \bigl( 1\vee \tfrac{\sqrt t}{|y|} \bigr)^\sigma t^{-\frac
n2}e^{-\frac{|x-y|^2}{c_2t}}.
\end{equation*}
where  $\sigma=\tfrac{n-2}2- \tfrac12\sqrt{(n-2)^2+4c}$ and $A\vee B=\max\{A,B\}$. Recently, Ishige,  Kabeya  and Ouhabaz \cite{IKO}
generalized the result to radial potential $V(|x|)$ which covers $c|x|^{-2}$ but not $a(\hat{x})|x|^{-2}$.

\end{remark}

{\bf Acknowledgments:}\quad  The authors thank Luca Fanelli for helpful discussions. This work  was supported by National Natural
Science Foundation of China (11771041, 11901041, 11831004, 11671033).
\vspace{0.2cm}

\section{Construction of the Spectral measure }

In this section, we prove Proposition \ref{prop:spect} by constructing the Schwartz kernel of spectral measure associated with the operator $\mathcal L_{{\A},0}$.
Even the strategy is in spirt of \cite{Z} and \cite{CT1,CT2},  we have to construct the argument adapted to $\mathcal{L}_{{\A},0}$, since there are many differences between two kinds of settings ( for example, $\mathcal L_{{\A},0}$ is a complex coefficient operator).

\subsection{Functional calculus} In this subsection, inspired by Cheeger-Taylor \cite{CT1,CT2}, we recall the functional calculus associated with the operator $\mathcal{L}_{{\A},0}$, see also \cite{FZZ}.

From \eqref{LAa} with $a=0$, we write
\begin{equation}\label{LAa-r}
\begin{split}
\mathcal{L}_{{\A},0}=-\partial_r^2-\frac{1}r\partial_r+\frac{L_{{\A},0}}{r^2},
\end{split}
\end{equation}
where the operator
\begin{equation}\label{L-angle}
\begin{split}
L_{{\A},0}&=(i\nabla_{\mathbb{S}^{1}}+{\A}(\hat{x}))^2,\qquad \hat{x}\in \mathbb{S}^1
\\&=-\Delta_{\mathbb{S}^{1}}+\big(|{\A}(\hat{x})|^2+i\,\mathrm{div}_{\mathbb{S}^{1}}{\A}(\hat{x})\big)+2i {\A}(\hat{x})\cdot\nabla_{\mathbb{S}^{1}}.
\end{split}
\end{equation}
Let $\hat{x}=(\cos\theta,\sin\theta)$, then
\begin{equation*}
\partial_\theta=-\hat{x}_2\partial_{\hat{x}_1}+\hat{x}_1\partial_{\hat{x}_2},\quad \partial_\theta^2=\Delta_{\mathbb{S}^{1}}.
\end{equation*}
Define $\alpha(\theta):[0,2\pi]\to \R$ such that
\begin{equation}\label{equ:alpha}
\alpha(\theta)={\bf A}(\cos\theta,\sin\theta)\cdot (-\sin\theta,\cos\theta),
\end{equation}
then by \eqref{eq:transversal}, we can write
\begin{equation*}
{\bf A}(\cos\theta,\sin\theta)=\alpha(\theta)(-\sin\theta,\cos\theta),\quad \theta\in[0,2\pi],
\end{equation*}
Thus, we obtain
\begin{equation}\label{LAa-s}
\begin{split}
L_{{\A},0}&=-\Delta_{\mathbb{S}^{1}}+\big(|{\A}(\hat{x})|^2+i\,\mathrm{div}_{\mathbb{S}^{1}}{\A}(\hat{x})\big)+2i {\A}(\hat{x})\cdot\nabla_{\mathbb{S}^{1}}\\
&=-\partial_\theta^2+\big(|{\alpha}(\theta)|^2+i\,{\alpha'}(\theta)\big)+2i {\alpha}(\theta)\partial_\theta\\
&=(i\partial_\theta+\alpha(\theta))^2.
\end{split}
\end{equation}
For simplicity, define the constant $\alpha$ to be
$$\alpha=\Phi_{\A}=\frac1{2\pi}\int_0^{2\pi} \alpha(\theta) d\theta.$$
Then the operator $i\partial_\theta+\alpha(\theta)$ with domain $H^1(\mathbb{S}^1)$ in $L^2(\mathbb{S}^1)$ has eigenvalue
$\nu(k)=k+\alpha, k\in\Z$ and the corresponding eigenfunction
\begin{equation}\label{eigf}
\varphi_k(\theta)=\frac1{\sqrt{2\pi}}e^{-i\big(\theta(k+\alpha)-\int_0^{\theta}\alpha(\theta') d\theta'\big)}.
\end{equation}
Therefore we obtain
$$L_{{\A},0}\varphi_k(\theta)=(k+\alpha)^2\varphi_k(\theta)$$
and we have the orthogonal decomposition $$L^2(\mathbb{S}^1)=\bigoplus_{k\in\Z}\mathcal{H}^k,$$
where $$\mathcal{H}^k=\text{span}\{\varphi_k(\theta)\}.$$
For $f\in L^2(\R^2)$, we can write $f$ into the form of separating variables
\begin{equation}\label{sep.v}
f(x)=\sum_{k\in\Z} c_{k}(r)\varphi_{k}(\theta)
\end{equation}
where
\begin{equation*}
 c_{k}(r)=\int_{0}^{2\pi}f(r,\theta)
\overline{\varphi_{k}(\theta)} d\theta.
\end{equation*}
Hence, on  each space $\mathcal{H}^{k}=\text{span}\{\varphi_k\}$, from \eqref{LAa-s}, we have
\begin{equation*}
\begin{split}
\LL_{{\A},0}=-\partial_r^2-\frac{1}r\partial_r+\frac{(k+\alpha)^2}{r^2}.
\end{split}
\end{equation*}
Let $\nu=\nu_k=|k+\alpha|$,  for $f\in L^2(\R^2)$, we define the Hankel transform of order $\nu$
\begin{equation}\label{hankel}
(\mathcal{H}_{\nu}f)(\rho,\theta)=\int_0^\infty J_{\nu}(r\rho)f(r,\theta) \,rdr,
\end{equation}
where the Bessel function of order $\nu$ is given by
\begin{equation}\label{Bessel}
J_{\nu}(r)=\frac{(r/2)^{\nu}}{\Gamma\left(\nu+\frac12\right)\Gamma(1/2)}\int_{-1}^{1}e^{isr}(1-s^2)^{(2\nu-1)/2} ds, \quad \nu>-1/2, r>0.
\end{equation}

%
%
%
%

Briefly recalling the functional calculus for well-behaved functions $F$ (see \cite{Taylor}),
\begin{equation}\label{funct}
F(\mathcal{L}_{{\A},0}) f(r_1,\theta_1)=\sum_{k\in\Z} \int_0^\infty \int_0^{2\pi} K(r_1,\theta_1,r_2,\theta_2) f(r_2,\theta_2)\; r_2\;dr_2\;d\theta_2
\end{equation}
where the kernel
$$K(r_1,\theta_1,r_2,\theta_2)=\sum_{k\in\Z}\varphi_{k}(\theta_1)\overline{\varphi_{k}(\theta_2)}K_{\nu_k}(r_1,r_2),$$
and
\begin{equation}\label{equ:knukdef}
  K_{\nu_k}(r_1,r_2)=\int_0^\infty F(\rho^2) J_{\nu_k}(r_1\rho)J_{\nu_k}(r_2\rho) \,\rho d\rho.
\end{equation}

\subsection{Schr\"odinger propagator}
In this subsection, we construct the propagator of Schr\"odinger equation.
The main result is the following.

\begin{theorem}[Schr\"odinger kernel]\label{thm:Sch-pro} Let $\LL_{{\A},0}$ be the operator in Theorem \ref{thm:dispersive} and suppose $x=r_1(\cos\theta_1,\sin\theta_1)$
and $y=r_2(\cos\theta_2,\sin\theta_2)$. Then, the kernel of Schr\"odinger propagator is given by
\begin{equation}\label{equ:heatkernel}
\begin{split}
  &e^{-it\LL_{{\A},0}}(x,y)\\
  =&\frac{e^{-\frac{|x-y|^2}{4it}} }{it}\frac{e^{i\int_{\theta_1}^{\theta_2}\alpha (\theta')d\theta'}}{4\pi^2}\big(\mathbbm{1}_{[0,\pi]}(|\theta_1-\theta_2|)
  +e^{-i2\pi\alpha}\mathbbm{1}_{[\pi,2\pi]}(|\theta_1-\theta_2|)\big)\\
  &-\frac{1}{4\pi^2}\frac{e^{-\frac{r_1^2+r_2^2}{4it}} }{it} e^{-i\alpha(\theta_1-\theta_2)+i\int_{\theta_2}^{\theta_{1}} \alpha(\theta') d\theta'} \int_0^\infty e^{-\frac{r_1r_2}{2it}\cosh s} \Big(\sin(|\alpha|\pi)e^{-|\alpha|s}\\
  &+\sin(\alpha\pi)\frac{(e^{-s}-\cos(\theta_1-\theta_2+\pi))\sinh(\alpha s)-i\sin(\theta_1-\theta_2+\pi)\cosh(\alpha s)}{\cosh(s)-\cos(\theta_1-\theta_2+\pi)}\Big) ds\\
  \triangleq&G(t;r_1,\theta_1,r_2,\theta_2)+D(t;r_1,\theta_1,r_2,\theta_2).
\end{split}
\end{equation}

\end{theorem}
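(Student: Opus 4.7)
The plan is to combine the functional calculus for $\LL_{{\A},0}$ developed above with a Weber-type integral identity and the Schl\"afli/Sommerfeld representation of the modified Bessel function $I_\nu$, then carry out Dirac-comb summation in the angular variable to collapse the resulting series to the closed form in \eqref{equ:heatkernel}. First I would substitute $F(\rho^2)=e^{-it\rho^2}$ into \eqref{funct}-\eqref{equ:knukdef} and evaluate the radial integral via
\begin{equation*}
\int_0^\infty e^{-it\rho^2} J_\nu(r_1\rho)\,J_\nu(r_2\rho)\,\rho\,d\rho = \frac{1}{2it}\,e^{-(r_1^2+r_2^2)/(4it)}\,I_\nu\!\left(\frac{r_1 r_2}{2it}\right),
\end{equation*}
interpreted via analytic continuation from $\mathrm{Re}(it)>0$. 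Combined with \eqref{eigf}, this writes the Schr\"odinger kernel, up to an explicit scalar prefactor, as
\begin{equation*}
e^{-(r_1^2+r_2^2)/(4it)}\cdot e^{-i\alpha(\theta_1-\theta_2)+i\int_{\theta_2}^{\theta_1}\alpha(\theta')\,d\theta'}\cdot\sum_{k\in\Z} e^{-ik(\theta_1-\theta_2)}\,I_{|k+\alpha|}\!\left(\frac{r_1 r_2}{2it}\right),
\end{equation*}
a series that should be understood distributionally in $\psi:=\theta_1-\theta_2$.

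Next I would invoke the Schl\"afli representation
\begin{equation*}
I_\nu(z)=\frac{1}{\pi}\int_0^\pi e^{z\cos\phi}\cos(\nu\phi)\,d\phi-\frac{\sin(\nu\pi)}{\pi}\int_0^\infty e^{-z\cosh s-\nu s}\,ds,
\end{equation*}
which splits the $k$-sum into a \emph{geometric} contribution plus a \emph{diffractive} one. For the geometric part, writing $\cos(|k+\alpha|\phi)=\tfrac{1}{2}(e^{i(k+\alpha)\phi}+e^{-i(k+\alpha)\phi})$ and applying the Poisson identity $\sum_{k\in\Z}e^{ik\zeta}=2\pi\sum_{n\in\Z}\delta(\zeta-2\pi n)$ produces two Dirac combs supported at $\phi=\pm\psi+2\pi n$. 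Since $\phi$ is integrated on $[0,\pi]$, exactly one node is active: for $|\psi|\in[0,\pi]$ one harvests $e^{i\alpha\psi}e^{z\cos\psi}$, which combines with the Gaussian prefactor to give $e^{-|x-y|^2/(4it)}$ via $|x-y|^2=r_1^2+r_2^2-2r_1r_2\cos\psi$; for $|\psi|\in[\pi,2\pi]$ the active node sits at $\phi=2\pi-|\psi|$ and produces the Aharonov-Bohm monodromy factor $e^{-2\pi i\alpha}$. This reproduces $G$ with angular factor $A_\alpha$.

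For the diffractive piece I would use $\sin(|k+\alpha|\pi)=\mathrm{sgn}(k+\alpha)(-1)^k\sin(\alpha\pi)$ and split the series at $k_\ast=\lceil-\alpha\rceil$. Each half reduces to a geometric progression of the form $\sum_{j\geq 0}(-1)^j e^{-j(s\pm i\psi)}$, summable in closed form. Using
\begin{equation*}
(1+e^{-(s+i\psi)})(1+e^{-(s-i\psi)})=2e^{-s}(\cosh s+\cos\psi)
\end{equation*}
together with $\cos(\psi+\pi)=-\cos\psi$, the two sums merge into the rational expression appearing in $B_\alpha$, while the boundary index $k_\ast$ contributes the pure-$\alpha$ summand $\sin(|\alpha|\pi)e^{-|\alpha|s}$. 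The main technical hurdle is the rigorous justification of the Schl\"afli identity at the purely imaginary argument $z=r_1r_2/(2it)$ and of the interchanges of summation and integration; both can be handled by first carrying out the calculation for $t$ with $\mathrm{Re}(it)>0$, where all integrals converge absolutely, and then passing to the physical regime $t\in\R\setminus\{0\}$ by analytic continuation together with continuity of both sides as tempered distributions in $\psi$. The most delicate step is identifying the correct active node in the geometric piece, and thereby recovering the Aharonov-Bohm phase $e^{-2\pi i\alpha}$ when the angular separation exceeds $\pi$.
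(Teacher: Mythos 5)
Your proposal follows essentially the same route as the paper's proof: you evaluate $K_\nu$ via the Weber integral (the paper does this through $\epsilon$-regularization $e^{-(\epsilon+it)\rho^2}$ and then $\epsilon\searrow 0$, which is the same as your analytic continuation from $\mathrm{Re}(it)>0$), then expand $I_\nu$ via the Schl\"afli/Watson representation, and handle the resulting geometric and diffractive series exactly as the paper does by deferring to the Poisson-summation and geometric-series computations from \cite[Proposition 3.1]{FZZ}; your spelled-out account of the Dirac-comb collapse and the splitting of $\sin(|k+\alpha|\pi)=\mathrm{sgn}(k+\alpha)(-1)^k\sin(\alpha\pi)$ into two geometric progressions matches that cited computation. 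The only caveat worth noting, which the paper also leaves implicit, is that the closed form of $B_\alpha$ and the convergence of the geometric series require a gauge reduction to $\alpha\in(-1,1)\setminus\{0\}$.
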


\begin{remark} If $\alpha\in\Z$, then $D(t; r_1,\theta_1,r_2,\theta_2)$ vanishes. The first term becomes
$$(4\pi^2 it)^{-1}e^{-\frac{|x-y|^2}{4it}}, $$
which consists with the kernel of free Schr\"odinger propagator without potential.
\end{remark}

\begin{proof}
 From \eqref{eigf} and  \eqref{funct}, the kernel $e^{-it\LL_{{\A},0}}(x,y)$ is given by
  \begin{align}\label{equ:ktxyschr}
    K(t; r_1,\theta_1,r_2,\theta_2)=&\sum_{k\in\Z}\varphi_{k}(\theta_1)\overline{\varphi_{k}(\theta_2)}K_{\nu_k}(t; r_1,r_2)\\\nonumber
    =&\frac1{2\pi}e^{-i\alpha(\theta_1-\theta_2)+i\int_{\theta_2}^{\theta_{1}}\alpha(\theta') d\theta'} \sum_{k\in\Z} e^{-ik(\theta_1-\theta_2)}  K_{\nu_k}(t; r_1,r_2),
  \end{align}
  where $K_{\nu}$ is given by
\begin{align}\label{equ:knukdef12sch}
  K_{\nu}(t,r_1,r_2)=&\int_0^\infty e^{-it\rho^2}J_{\nu}(r_1\rho)J_{\nu}(r_2\rho) \,\rho d\rho\\\nonumber
  =&\lim_{\epsilon\searrow0}\int_0^\infty e^{-(\epsilon+it)\rho^2}J_{\nu}(r_1\rho)J_{\nu}(r_2\rho) \,\rho d\rho\\\nonumber
  =&\lim_{\epsilon\searrow0}\frac{e^{-\frac{r_1^2+r_2^2}{4(\epsilon+it)}}}{2(\epsilon+it)} I_\nu\Big(\frac{r_1r_2}{2(\epsilon+it)}\Big),
\end{align}
where we use the Weber identity, e.g. \cite[Proposition 8.7]{Taylor} and $I_\nu$ is the modified Bessel function.
For $z=\frac{r_1r_2}{2(\epsilon+it)}$ with $\epsilon>0$, we use the integral representation in \cite{Watson}  to write the modified Bessel function $I_\nu$:
\begin{equation}\label{m-bessel}
I_\nu(z)=\frac1{\pi}\int_0^\pi e^{z\cos s} \cos(\nu s) ds-\frac{\sin(\nu\pi)}{\pi}\int_0^\infty e^{-z\cosh s} e^{-s\nu} ds.
\end{equation}
Hence,  we need to consider
  \begin{equation}\label{equ:term1sch}
 \frac1{2\pi} \frac1\pi \sum_{k\in\Z} e^{-ik(\theta_1-\theta_2)} \int_0^\pi e^{z\cos s} \cos(\nu_k s) ds,
  \end{equation}
  and
    \begin{equation}\label{equ:term2sch}
 \frac1{2\pi} \frac1\pi \sum_{k\in\Z} e^{-ik(\theta_1-\theta_2)} \sin(\nu_k\pi)\int_0^\infty e^{-z\cosh s} e^{-s\nu_k} ds.
  \end{equation}
By the same argument as in \cite[Proposition 3.1]{FZZ}, we get
\begin{align*}
  \eqref{equ:term1sch}= & \frac1{2\pi^2}\times \begin{cases}
e^{z\cos(\theta_1-\theta_2)} e^{i(\theta_1-\theta_2)\alpha}\quad&\text{if}\quad |\theta_1-\theta_2|<\pi\\
e^{z\cos(\theta_1-\theta_2)}e^{i(\theta_1-\theta_2-2\pi)\alpha}\quad&\text{if}\quad \pi<|\theta_1-\theta_2|<2\pi,
 \end{cases}
\end{align*}
  and
\begin{align*}
 & \eqref{equ:term2sch}\\
 = &\frac1{2\pi^2}\int_0^\infty e^{-z\cosh s} \Big[\sin(|\alpha|\pi)e^{-|\alpha| s}\\
 &+\sin(\alpha\pi)\frac{(e^{- s}-\cos(\theta_1-\theta_2+\pi))\sinh(\alpha s)
-i\sin(\theta_1-\theta_2+\pi)\cosh(\alpha s)}{\cosh( s)-\cos(\theta_1-\theta_2+\pi)}\Big]\,ds.
\end{align*}
  Plugging these into \eqref{equ:ktxyschr} and let $\epsilon\to 0^+$, we obtain Theorem \ref{thm:Sch-pro}.

  \end{proof}

By using Theorem \ref{thm:Sch-pro} and the same proof as in \cite[Proposition 3.2]{FZZ}, we can show
\begin{corollary}[Dispersive estimate]\label{cor:shcrpoint} There holds
\begin{equation}\label{equ:dispes}
  \big\|e^{-it\LL_{{\A},0}}f\big\|_{L^\infty(\R^2)}\lesssim |t|^{-1}\|f\|_{L^1(\R^2)}.
\end{equation}
\end{corollary}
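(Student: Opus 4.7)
The plan is to reduce the $L^1\to L^\infty$ bound to a pointwise estimate on the Schwartz kernel $K(t;x,y)=G(t;r_1,\theta_1,r_2,\theta_2)+D(t;r_1,\theta_1,r_2,\theta_2)$ given in Theorem \ref{thm:Sch-pro}, and then show $|K(t;x,y)|\lesssim |t|^{-1}$ uniformly in $x,y$. By the standard duality characterization of $L^1\to L^\infty$ operator norms, this suffices.

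For the geometric term $G$, since $e^{-|x-y|^2/(4it)}$ has modulus one (as $t\in\R$), and the angular factor $A_\alpha(\theta_1,\theta_2)$ is a bounded function of modulus at most $(4\pi^2)^{-1}$, we obtain $|G(t;r_1,\theta_1,r_2,\theta_2)|\le C|t|^{-1}$ at once. For the diffractive term $D$, the prefactor $e^{-(r_1^2+r_2^2)/(4it)}/(it)$ again has modulus $|t|^{-1}$, and $|e^{-r_1r_2\cosh s/(2it)}|=1$. Hence it only remains to show that the $s$-integral
\begin{equation*}
\mathcal I(\theta_1-\theta_2):=\int_0^\infty\Big|\sin(|\alpha|\pi)e^{-|\alpha|s}+\sin(\alpha\pi)\frac{(e^{-s}-\cos\psi)\sinh(\alpha s)-i\sin\psi\cosh(\alpha s)}{\cosh s-\cos\psi}\Big|\,ds
\end{equation*}
is bounded uniformly in $\psi:=\theta_1-\theta_2+\pi$. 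WLOG we may assume $|\alpha|\in(0,1)$, since by the unitary equivalence $\LL_{{\A},0}\mapsto e^{ik\theta}\LL_{{\A},0}e^{-ik\theta}$ the parameter $\alpha$ may be shifted by any integer, and $\alpha\notin\Z$ by hypothesis.

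The first summand contributes $\sin(|\alpha|\pi)/|\alpha|$, a finite quantity. For the second summand, the large-$s$ behaviour is governed by the ratio $e^{|\alpha|s}/e^s=e^{-(1-|\alpha|)s}$, hence gives an exponentially decaying tail that is integrable (uniformly in $\psi$) because $|\alpha|<1$. The delicate point is the near-$s=0$ behavior when $\psi\equiv 0\pmod{2\pi}$ (i.e.\ $\theta_1-\theta_2=\pm\pi$), because then the denominator $\cosh s-\cos\psi=\cosh s-1\sim s^2/2$ vanishes. Precisely at this value of $\psi$ one has $\sin\psi=0$, so the imaginary part of the numerator vanishes, while the real part becomes $(e^{-s}-1)\sinh(\alpha s)\sim -\alpha s^2$; thus the apparent singularity is removable and the integrand is bounded by an absolute constant on $(0,1]$. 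For $\psi$ away from $0\pmod{2\pi}$ the denominator is bounded below, and the integrand is uniformly controlled on $(0,1]$. A short computation in an $\varepsilon$-neighborhood of the worst angle shows the bound is uniform in $\psi$.

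The main obstacle is this uniformity near the antipodal direction $\theta_1-\theta_2=\pm\pi$, which is where the diffractive effect is concentrated. Once that is handled, combining the two bounds gives $|D|\lesssim |t|^{-1}$ with a constant independent of $(x,y)$, completing the proof. Since this argument parallels the one used for $\sin(t\sqrt{\mathcal L_{{\A},0}})/\sqrt{\mathcal L_{{\A},0}}$ in \cite{FZZ}, one can invoke \cite[Proposition 3.2]{FZZ} almost verbatim, as the structure of the $s$-integrand here is essentially the same.
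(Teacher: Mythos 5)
Your proposal is correct and follows essentially the same route as the paper, which simply invokes Theorem \ref{thm:Sch-pro} together with the argument of \cite[Proposition 3.2]{FZZ}; your fleshed-out kernel analysis (unit modulus of the Gaussian factors, boundedness of $A_\alpha$, uniform-in-$\psi$ integrability of $B_\alpha$, with the delicate antipodal case $\theta_1-\theta_2=\pm\pi$ handled via the removable singularity and the gauge reduction to $|\alpha|<1$) is precisely the content of that cited proof.
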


\subsection{Resolvent kernel} In this subsection, we use the Schr\"odinger operator to construct the resolvent kernel
which will be used to prove spectral measure theorem according to Stone's formula.
We first note that for $z\in\{z\in\C:\;{\rm Im}(z)>0\}$
\begin{equation*}
  (s-z)^{-1}=-\frac1i\int_0^\infty e^{-ist}e^{izt}\;dt,\quad \forall\;s\in\R,
\end{equation*}
then we obtain for $z=\lambda^2+i\epsilon$ with $\epsilon>0$
\begin{equation}\label{equ:resfor}
  \big(\LL_{{\A},0}-(\lambda^2+i0)\big)^{-1}=-\frac1i\lim_{\epsilon\searrow0}\int_0^\infty e^{-it\LL_{{\A},0}}e^{it(\lambda^2+i\epsilon)}\;dt.
\end{equation}


From Theorem \ref{thm:Sch-pro}, \eqref{A-al} and \eqref{B-al},
we need to consider
\begin{equation}\label{equ:resG}
\begin{split}
&\frac1i\lim_{\epsilon\searrow0}\int_0^\infty G(t;r_1,\theta_1,r_2,\theta_2) e^{it(\lambda^2+i\epsilon)}\;dt\\
=& \frac1i\lim_{\epsilon\searrow0}\int_0^\infty \frac{e^{-\frac{|x-y|^2}{4it}} }{it} e^{it(\lambda^2+i\epsilon)}\;dt \times\,A_\alpha(\theta_1,\theta_2),
\end{split}
\end{equation}
and
\begin{equation}\label{equ:resD}
\begin{split}
&\frac1i\lim_{\epsilon\searrow0}\int_0^\infty D(t;r_1,\theta_1,r_2,\theta_2) e^{it(\lambda^2+i\epsilon)}\;dt\\
=&\frac1i\lim_{\epsilon\searrow0} \int_0^\infty \int_0^\infty\frac{e^{-\frac{r_1^2+r_2^2+2r_1r_2\cosh s}{4it}} }{it} e^{it(\lambda^2+i\epsilon)}\;dt B_\alpha(s, \theta_1,\theta_2)\, ds.
\end{split}
\end{equation}
On the one hand, note that
$$\int_{\R^2}e^{-ix\cdot\xi} e^{-it|\xi|^2}\;d\xi=\frac{\pi}{it}e^{-\frac{|x|^2}{4it}},$$
we get for $z=\lambda^2+i\epsilon$ with $\epsilon>0$
\begin{align*}
 & \int_0^\infty \frac{e^{-\frac{|x-y|^2}{4it}} }{it}e^{itz}\;dt
  =\frac{1}{\pi}\int_0^\infty\int_{\R^2}e^{-i(x-y)\cdot\xi} e^{-it|\xi|^2}\;d\xi \,e^{itz}\;dt\\
  =&\frac{1}{\pi}\int_{\R^2}e^{-i(x-y)\cdot\xi} \int_0^\infty e^{-it(|\xi|^2-z)}\;dt\;d\xi
  =\frac{1}{i\pi}\int_{\R^2}\frac{e^{-i(x-y)\cdot\xi}}{|\xi|^2-z}\;d\xi.
\end{align*}
On the other hand, we similarly obtain
\begin{equation}\label{equ:intr1r223}
  \int_0^\infty  \frac{e^{-\frac{r_1^2+r_2^2}{4it}} }{it} e^{-\frac{r_1r_2}{2it}\cosh s} e^{itz}\;dt=\frac{1}{i\pi}\int_{\R^2}\frac{e^{-i{\bf n}\cdot\xi}}{|\xi|^2-z}\;d\xi,
\end{equation}
where ${\bf n}=(r_1+r_2, \sqrt{2r_1r_2(\cosh s-1)})$.\vspace{0.2cm}

Therefore we finally obtain
\begin{proposition}[Outgoing resolvent kernel]\label{prop:res-ker-out}
Let $x=r_1(\cos\theta_1,\sin\theta_1)$ and $y=r_2(\cos\theta_2,\sin\theta_2)$,
then we have the expression of resolvent kernel
\begin{align}\label{equ:res-ker-out}
     \big(\LL_{{\A},0}-(\lambda^2+i0)\big)^{-1}=&\frac1{\pi}\int_{\R^2}\frac{e^{-i(x-y)\cdot\xi}}{|\xi|^2-(\lambda^2+i0)}\;d\xi\, A_\alpha(\theta_1,\theta_2)\\\nonumber
  &+\frac1{\pi}\int_0^\infty \int_{\R^2}\frac{e^{-i{\bf n}\cdot\xi}}{|\xi|^2-(\lambda^2+i0)}\;d\xi \, B_\alpha(s,\theta_1,\theta_2)\;ds,
\end{align}
where $A_\alpha(\theta_1,\theta_2)$, $B_\alpha(s, \theta_1,\theta_2)$ are respectively given in  \eqref{A-al} and \eqref{B-al}, and  ${\bf n}=(r_1+r_2, \sqrt{2r_1r_2(\cosh s-1)})$.
\end{proposition}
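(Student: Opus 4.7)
\noindent\textbf{Proof proposal for Proposition \ref{prop:res-ker-out}.}
The plan is to start from the Laplace-transform identity
\eqref{equ:resfor}, substitute the explicit Schr\"odinger kernel
from Theorem \ref{thm:Sch-pro}, and then recognize each resulting
time integral as a free resolvent kernel in $\mathbb{R}^2$. The key
algebraic observation is that the vector $\mathbf{n}=\bigl(r_1+r_2,\sqrt{2r_1r_2(\cosh s-1)}\bigr)$
satisfies $|\mathbf n|^2=(r_1+r_2)^2+2r_1r_2(\cosh s-1)=r_1^2+r_2^2+2r_1r_2\cosh s$,
which matches exactly the phase appearing in the diffractive term
$D(t;r_1,\theta_1,r_2,\theta_2)$ of \eqref{equ:heatkernel} once combined with the
$\cosh s$ factor. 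Thus the diffractive part of the Schr\"odinger kernel
is structurally the same as the geometric part, with $|x-y|$
replaced by $|\mathbf n|$ and multiplied by $B_\alpha(s,\theta_1,\theta_2)\,ds$.

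The next step is to insert the splitting
$e^{-it\LL_{\A,0}}(x,y)=G+D$ from Theorem \ref{thm:Sch-pro} into
\eqref{equ:resfor}. For the geometric term I would use the
Fourier representation of the free Schr\"odinger kernel,
\[
\frac{e^{-|x-y|^2/(4it)}}{it}=\frac{1}{\pi}\int_{\R^2} e^{-i(x-y)\cdot\xi}\,e^{-it|\xi|^2}\,d\xi,
\]
swap the $t$ and $\xi$ integrals by Fubini (legitimate because the
damping $e^{-\epsilon t}$ coming from $z=\lambda^2+i\epsilon$ makes
the joint integrand absolutely integrable), and then compute
\[
\int_0^\infty e^{-it(|\xi|^2-z)}\,dt=\frac{1}{i(|\xi|^2-z)}.
\]
This gives precisely the geometric piece of
\eqref{equ:res-ker-out} after carrying $\epsilon\to 0^+$ in the
distributional sense. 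The diffractive term is handled by the same
recipe applied with $\mathbf n$ in place of $x-y$, after swapping
the $s$-integral to the outside; this yields \eqref{equ:intr1r223}
and hence the second line of \eqref{equ:res-ker-out}.

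The main technical obstacle is justifying the passage to the
limit $\epsilon\searrow 0$ together with the interchange of
the $t$, $s$, and $\xi$ integrations in the diffractive term,
since without the $\epsilon$-regularization neither the $t$ nor the
$\xi$ integral converges absolutely and the $s$-integrand decays
only through the oscillatory factor $e^{-\frac{r_1r_2}{2it}\cosh s}$.
I would handle this by keeping $\epsilon>0$ throughout, using the
bound $|B_\alpha(s,\theta_1,\theta_2)|\lesssim e^{-|\alpha|s}$
(visible from \eqref{B-al} together with the explicit hyperbolic
identities, since the apparent $1/(\cosh s-\cos(\theta_1-\theta_2+\pi))$
singularity is compensated by the sinh/cos numerator), and applying
dominated convergence on the resolvent side where the limit
$(|\xi|^2-\lambda^2-i\epsilon)^{-1}\to(|\xi|^2-(\lambda^2+i0))^{-1}$
exists as a tempered distribution. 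Once these interchanges are
justified, assembling the two pieces with the prefactors
$A_\alpha(\theta_1,\theta_2)$ and $B_\alpha(s,\theta_1,\theta_2)$
from \eqref{A-al}--\eqref{B-al} yields \eqref{equ:res-ker-out}.
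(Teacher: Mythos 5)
Your proposal follows the paper's own route exactly: start from the Laplace-transform identity \eqref{equ:resfor}, insert the $G+D$ decomposition from Theorem~\ref{thm:Sch-pro}, replace the free Schr\"odinger kernel by its $\xi$-Fourier integral, swap the $t$- and $\xi$-integrals and evaluate $\int_0^\infty e^{-it(|\xi|^2-z)}\,dt$, then read off the geometric and diffractive contributions with $|\mathbf n|^2 = r_1^2+r_2^2+2r_1r_2\cosh s$. The decomposition, the intermediate identity \eqref{equ:intr1r223}, and the role of $A_\alpha$ and $B_\alpha$ all match. You discuss the Fubini/limit justification in more detail than the paper does, which is fine; the paper keeps this step essentially formal.

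One small imprecision: the claimed pointwise bound $|B_\alpha(s,\theta_1,\theta_2)|\lesssim e^{-|\alpha|s}$ is not quite what the formula \eqref{B-al} gives. The second summand behaves like $e^{(|\alpha|-1)s}$ for large $s$ (numerator $\sim e^{|\alpha|s}$, denominator $\sim e^{s}$), and near $s=0$ the ratio $\sin(\theta_1-\theta_2+\pi)/\bigl(\cosh s-\cos(\theta_1-\theta_2+\pi)\bigr)$ can be large when $\theta_1-\theta_2$ is near $\pi$, so the bound is not uniform in the angular variables. What your argument actually needs — and what the paper uses, cf.\ \eqref{est:AB} and the analogous estimates \eqref{equ:ream1}--\eqref{equ:ream3} — is the uniform $L^1_s$ bound $\int_0^\infty|B_\alpha(s,\theta_1,\theta_2)|\,ds\leq C$, which holds once the flux is reduced to $\alpha\in(-1,1)\setminus\{0\}$ (a harmless gauge normalization). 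Replacing the pointwise bound by this integrability statement closes the gap without changing the structure of the argument.
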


 On the one hand, from \eqref{LAa-r} and \eqref{LAa-s}, we see
 $\overline{\mathcal L_{{\A},0}}$ is the same as $\mathcal L_{{\A},0}$ with replacing $\alpha(\theta)$ by $-\alpha(\theta)$.
 On the other hand, we note that
 \begin{equation}\label{out-inc}
\begin{split}
(\mathcal L_{{\A},0}-(\lambda^2-i0))^{-1}=\overline{(\overline{\mathcal L_{{\A},0}}-(\lambda^2+i0))^{-1}},
\end{split}
\end{equation}
and the fact $\overline{A_{-\alpha}}=A_\alpha$ and $\overline{B_{-\alpha}}=B_\alpha$, hence we similarly obtain
\begin{proposition}[Incoming resolvent kernel]\label{prop:res-ker-inc}
Let $x=r_1(\cos\theta_1,\sin\theta_1)$ and $y=r_2(\cos\theta_2,\sin\theta_2)$,
then we have the expression of resolvent kernel
\begin{align}\label{equ:res-ker-inc}
     \big(\LL_{{\A},0}-(\lambda^2-i0)\big)^{-1}=&\frac1{\pi}\int_{\R^2}\frac{e^{-i(x-y)\cdot\xi}}{|\xi|^2-(\lambda^2-i0)}\;d\xi\, A_\alpha(\theta_1,\theta_2)\\\nonumber
  &+\frac1{\pi}\int_0^\infty \int_{\R^2}\frac{e^{-i{\bf n}\cdot\xi}}{|\xi|^2-(\lambda^2-i0)}\;d\xi \, B_\alpha(s,\theta_1,\theta_2)\;ds,
\end{align}
where $A_\alpha(\theta_1,\theta_2)$, $B_\alpha(s, \theta_1,\theta_2)$ are respectively given in  \eqref{A-al} and \eqref{B-al}, and  ${\bf n}=(r_1+r_2, \sqrt{2r_1r_2(\cosh s-1)})$.
\end{proposition}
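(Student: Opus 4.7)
The plan is to deduce the incoming kernel formula directly from the outgoing kernel formula in Proposition \ref{prop:res-ker-out} via the duality identity \eqref{out-inc}, namely
$$
\bigl(\mathcal{L}_{{\A},0}-(\lambda^2-i0)\bigr)^{-1}=\overline{\bigl(\overline{\mathcal{L}_{{\A},0}}-(\lambda^2+i0)\bigr)^{-1}}.
$$
The first observation is that inspection of the angular form \eqref{LAa-s}, $L_{{\A},0}=(i\partial_\theta+\alpha(\theta))^2$, shows that $\overline{\mathcal{L}_{{\A},0}}$ is precisely $\mathcal{L}_{{\A},0}$ with $\alpha(\theta)$ (and therefore its average $\Phi_{\A}=\alpha$) replaced by $-\alpha(\theta)$. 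So, applying Proposition \ref{prop:res-ker-out} to this modified operator gives an explicit expression for $\bigl(\overline{\mathcal{L}_{{\A},0}}-(\lambda^2+i0)\bigr)^{-1}$ in terms of the angular coefficients $A_{-\alpha}$ and $B_{-\alpha}$ and the two Fourier integrals with denominators $|\xi|^2-(\lambda^2+i0)$.

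The second step is to take the complex conjugate of this expression and reorganize each piece. For the Fourier integrals one uses
$\overline{|\xi|^2-(\lambda^2+i0)}=|\xi|^2-(\lambda^2-i0)$ together with the change of variables $\xi\mapsto-\xi$ (which preserves $\R^2$) to convert $\overline{e^{-i(x-y)\cdot\xi}}$ and $\overline{e^{-i\mathbf{n}\cdot\xi}}$ back into $e^{-i(x-y)\cdot\xi}$ and $e^{-i\mathbf{n}\cdot\xi}$ respectively, thereby recovering exactly the Fourier integrals that appear in \eqref{equ:res-ker-inc}. For the angular factors, I would verify by direct computation from the explicit formulas \eqref{A-al} and \eqref{B-al} the two identities $\overline{A_{-\alpha}}=A_\alpha$ and $\overline{B_{-\alpha}}=B_\alpha$. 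For $A_{-\alpha}$ this is immediate: conjugating reverses the sign of the phase $e^{i\int_{\theta_1}^{\theta_2}\alpha(\theta')\,d\theta'}$ and of the discrete factor $e^{-2\pi i\alpha}$, and these two sign flips are precisely compensated by the substitution $\alpha\to-\alpha$. For $B_{-\alpha}$ the same principle applies, using that $\sin(\alpha\pi)$ and $\sinh(\alpha s)$ are odd in $\alpha$, $\sin(|\alpha|\pi)$ and $\cosh(\alpha s)$ are even in $|\alpha|$, and the outer phase $e^{-i\alpha(\theta_1-\theta_2)+i\int_{\theta_2}^{\theta_1}\alpha(\theta')\,d\theta'}$ is purely imaginary in its exponent.

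Combining these ingredients produces the stated formula \eqref{equ:res-ker-inc}. The only place where genuine work is needed is the verification of $\overline{A_{-\alpha}}=A_\alpha$ and $\overline{B_{-\alpha}}=B_\alpha$, which is a mechanical but careful bookkeeping check of the explicit expressions; the rest of the argument is a formal conjugation together with one change of variables in the Fourier integrals. In particular no new limiting procedure is required beyond the one already used to make sense of the outgoing resolvent as a boundary value from $\{\operatorname{Im} z>0\}$, since complex conjugation turns that into the corresponding boundary value from $\{\operatorname{Im} z<0\}$.
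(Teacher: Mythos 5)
Your proposal is correct and follows essentially the same approach as the paper: using the identity $(\mathcal L_{{\A},0}-(\lambda^2-i0))^{-1}=\overline{(\overline{\mathcal L_{{\A},0}}-(\lambda^2+i0))^{-1}}$, the observation that conjugating $\mathcal L_{{\A},0}$ amounts to replacing $\alpha(\theta)$ (hence $\alpha$) by $-\alpha(\theta)$, and the two identities $\overline{A_{-\alpha}}=A_\alpha$, $\overline{B_{-\alpha}}=B_\alpha$. You simply spell out a couple of the bookkeeping steps the paper leaves implicit, such as the change of variables $\xi\mapsto -\xi$ in the Fourier integrals.
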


\subsection{Spectral measure kernel} We prove the main result Proposition \ref{prop:spect} in this subsection.
According to Stone's formula, the spectral measure is related to the resolvent
\begin{equation}\label{equ:spemes}
   dE_{\sqrt{\LL_{\A},0}}(\lambda)=\frac{d}{d\lambda}dE_{\sqrt{\LL_{\A},0}}(\lambda)\;d\lambda
   =\frac{\lambda}{i\pi}\big(R(\lambda+i0)-R(\lambda-i0)\big)\;d\lambda
\end{equation}
where the resolvent
$$R(\lambda\pm i0)=\lim_{\epsilon\searrow0}\big(\LL_{{\A},0}-(\lambda^2\pm i\epsilon)\big)^{-1}.$$
From Proposition \ref{prop:res-ker-out} and Proposition \ref{prop:res-ker-inc}, we obtain
\begin{align*}
  &dE_{\sqrt{\LL_{\A},0}}(\lambda; x,y)\\
  = &\frac1{\pi} \frac{\lambda}{i\pi} A_\alpha(\theta_1,\theta_2)\int_{\R^2} e^{-i(x-y)\cdot\xi}\Big(\frac{1}{|\xi|^2-(\lambda^2+i0)}-\frac{1}{|\xi|^2-(\lambda^2-i0)}\Big)\;d\xi\\
  &+\frac1{\pi} \frac{\lambda}{i\pi} \int_0^\infty \Big[ \int_{\R^2} e^{-i\bf{n}\cdot\xi}\Big(\frac{1}{|\xi|^2-(\lambda^2+i0)}-\frac{1}{|\xi|^2-(\lambda^2-i0)}\Big)\;d\xi \Big]B_\alpha(s,\theta_1,\theta_2)\;ds.
\end{align*}

On the one hand, we note that
\begin{equation}\label{id-spect}
\begin{split}
&\lim_{\epsilon\to 0^+}\frac{\lambda}{i\pi }\int_{\R^2} e^{-ix\cdot\xi}\Big(\frac{1}{|\xi|^2-(\lambda^2+i\epsilon)}-\frac{1}{|\xi|^2-(\lambda^2-i\epsilon)}\Big) d\xi\\
=&\lim_{\epsilon\to 0^+} \frac{\lambda}{\pi }\int_{\R^2} e^{-ix\cdot\xi}\Im\Big(\frac{1}{|\xi|^2-(\lambda^2+i\epsilon)}\Big)d\xi\\
=&\lim_{\epsilon\to 0^+} \frac{\lambda}{\pi }\int_{0}^\infty \frac{\epsilon}{(\rho^2-\lambda^2)^2+\epsilon^2} \int_{|\omega|=1} e^{-i\rho x\cdot\omega} d\sigma_\omega  \, \rho d\rho\\
=& \lambda \int_{|\omega|=1} e^{-i\lambda x\cdot\omega} d\sigma_\omega  \\
\end{split}
\end{equation}
where we use the fact that
 the  Poisson kernel is an approximation to the identity which implies that, for any reasonable function $m(x)$
\begin{equation}
\begin{split}
m(x)&=\lim_{\epsilon\to 0^+}\frac1\pi \int_{\R} {\rm Im}\Big(\frac{1}{x-(y+i\epsilon)}\Big) m(y)dy
\\&=\lim_{\epsilon\to 0^+}\frac1\pi \int_{\R} \frac{\epsilon}{(x-y)^2+\epsilon^2} m(y)dy.
\end{split}
\end{equation}
On the other hand, from \cite[Theorem 1.2.1]{sogge}, we also note that
\begin{equation}
\begin{split}
\int_{\mathbb{S}^{1}} e^{-i x\cdot\omega} d\sigma(\omega)=\sum_{\pm}  a_\pm(|x|) e^{\pm i|x|}
\end{split}
\end{equation}
where
\begin{equation}
\begin{split}
| \partial_r^k a_\pm(r)|\leq C_k(1+r)^{-\frac{1}2-k},\quad k\geq 0.
\end{split}
\end{equation}
Therefore we obtain that
 \begin{equation*}
 \begin{split}
 dE_{\sqrt{\LL_{{\A},0}}}(\lambda;x,y) =&
\frac{\lambda}{\pi} \sum_{\pm}\Big(
 a_\pm(\lambda |x-y|)e^{\pm i\lambda |x-y|} A_{\alpha}(\theta_1,\theta_2)
 \\&\quad
+\int_0^\infty a_\pm(\lambda |{\bf{n}}|)e^{\pm i\lambda |{\bf n}|}
B_{\alpha}(s,\theta_1,\theta_2) ds\Big).
\end{split}
\end{equation*}
Notice $d=|x-y|$ and $|{\bf n}|=d_s$ in \eqref{d-j} and \eqref{d-s} again, we prove Proposition \ref{prop:spect}.

\section{The proof of dispersive estimates}

In this section, we use stationary phase argument and Proposition
\ref{prop:spect} to prove Theorem \ref{thm:dispersive}.  We only prove \eqref{dis-kg} since
\eqref{dis-S} have been proved in Corollary \ref{cor:shcrpoint} and \eqref{dis-w} follows from
Theorem \ref{thm:dispersive} and the stationary phase argument of  \cite[Theorem 3.3]{Z} or \cite[Proposition 3.3]{Zhang}.
Indeed, it is more complicate for the Klein-Gordon than the wave and Schr\"odinger case,
the reason is that the Klein-Gordon equation  likes wave at high frequency but Schr\"odinger at low frequency.

In this section, recalling $\phi$  as in \eqref{dp}, we know that $\phi\in C_0^\infty(\mathbb{R}\setminus\{0\})$ take values in
$[0,1]$ and be supported in $[1/2,2]$ such that
\begin{equation}\label{dp'}
1=\sum_{j\in\Z}\phi(2^{-j}\lambda),\quad\lambda>0\quad \text{and}\quad \varphi_0(\lambda)=\sum_{j\leq 0}\phi(2^{-j}\lambda).
\end{equation}

\begin{proposition}[Dispersive estimates for low frequency]\label{dispersive-l}
Let $dE_{\sqrt{\LL_{{\A},0}}}(\lambda;x,y)$ be the spectral measure in Proposition \ref{prop:spect}. There exists  a constant $C$ independent of points $x,y\in\R^2$ such that the kernel
of Klein-Gordon propagator satisfies
\begin{equation}\label{disper-l}
\Big|\int_0^\infty e^{it\sqrt{1+\lambda^2}} \varphi_0(\lambda) dE_{\sqrt{\LL_{{\A},0}}}(\lambda;x,y) \Big|\leq C (1+|t|)^{-1}.
\end{equation}

\end{proposition}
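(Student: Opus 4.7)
The plan is to substitute the spectral measure representation from Proposition~\ref{prop:spect} into the integral and reduce the estimate to the uniform one-variable oscillatory integral bound
\begin{equation*}
\bigl|J_\pm(t,D)\bigr|:=\Bigl|\int_0^\infty e^{i(t\sqrt{1+\lambda^2}\pm\lambda D)}\,\varphi_0(\lambda)\,\lambda\,a_\pm(\lambda D)\,d\lambda\Bigr|\leq C(1+|t|)^{-1},
\end{equation*}
holding uniformly in $D\geq 0$. The geometric contribution (with $D=|x-y|$) and the diffractive contribution (with $D=d_s$, then integrated over $s\in[0,\infty)$ against $B_\alpha(s,\theta_1,\theta_2)$) will both follow at once, because $|A_\alpha|$ is pointwise bounded by~\eqref{A-al}, and the structure of~\eqref{B-al} makes $\int_0^\infty|B_\alpha(s,\theta_1,\theta_2)|\,ds$ uniformly bounded once $\alpha$ is normalized into $(0,1)$: the first summand has decay $e^{-|\alpha|s}$, while the second is controlled by $e^{-(1-|\alpha|)s}$ at infinity.

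When $|t|\leq 1$ the inequality is immediate, since $\varphi_0$ is compactly supported and $|a_\pm|$ is bounded by~\eqref{bean}. The substantive part is proving $|J_\pm(t,D)|\leq C|t|^{-1}$ for $|t|\geq 1$. The guiding observation is that $t\sqrt{1+\lambda^2}=t+\tfrac12 t\lambda^2+O(t\lambda^4)$ is a Schr\"odinger-like phase at low frequency, and the planar Schr\"odinger propagator exhibits precisely the $|t|^{-1}$ decay one seeks. Structurally, $\Phi_\pm(\lambda):=t\sqrt{1+\lambda^2}\pm\lambda D$ satisfies $\Phi_\pm''(\lambda)=t/(1+\lambda^2)^{3/2}$, so $|\Phi_\pm''|\sim|t|$ on $\mathrm{supp}\,\varphi_0$, and the amplitude $\lambda\,a_\pm(\lambda D)\varphi_0(\lambda)$ vanishes linearly at $\lambda=0$.

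The key device I plan to use is the algebraic identity $\lambda=\tfrac{\sqrt{1+\lambda^2}}{t}\bigl(\Phi_\pm'(\lambda)\mp D\bigr)$, inserted into $J_\pm$ and followed by integration by parts. The boundary term at $\lambda=0$ equals $-\tfrac{a_\pm(0)\varphi_0(0)}{it}$ and is already of size $|t|^{-1}$; the interior term involving $\Phi_\pm'$ acquires an explicit factor $|t|^{-1}$ and leaves an amplitude of $L^1$-norm bounded uniformly in $D$ (the $D$-growth produced by differentiating $a_\pm(\lambda D)$ is absorbed by the decay $|a_\pm^{(k)}(r)|\lesssim(1+r)^{-1/2-k}$ of~\eqref{bean}); and the remaining interior term carries a prefactor $D/|t|$ that is controllable when $D\leq c|t|$. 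For $D\geq c|t|$ the phase derivative is non-stationary, $|\Phi_\pm'(\lambda)|\geq D/2$ on $\mathrm{supp}\,\varphi_0$, and repeated non-stationary integration by parts gives decay $O(D^{-N})$ for any $N$, hence a fortiori $O(|t|^{-N})$.

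The main obstacle I anticipate is the transitional window $D\sim|t|$: the minus-phase possesses a stationary point $\lambda_c=D/\sqrt{t^2-D^2}$ which moves to the boundary of $\mathrm{supp}\,\varphi_0$ in this regime, and neither the IBP scheme above nor plain van der Corput (which only yields $|t|^{-1/2}$) suffices by itself. To resolve this I will combine a localized stationary-phase expansion around $\lambda_c$ with the linear vanishing of the amplitude at $\lambda=0$: the critical value $\lambda_c\,a_\pm(\lambda_cD)\lesssim|t|^{-1/2}$ supplies exactly the extra half-power of $|t|$ needed beyond van der Corput's $|t|^{-1/2}$ bound. Once the three regimes are matched and the uniform bound on $J_\pm$ is in place, the initial reduction immediately yields~\eqref{disper-l}.
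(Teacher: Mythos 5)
Your reduction of \eqref{disper-l} to the uniform one-variable bound $|J_\pm(t,D)|\lesssim(1+|t|)^{-1}$, together with the uniform bound $|A_\alpha|+\int_0^\infty|B_\alpha|\,ds\leq C$ of \eqref{est:AB}, is exactly the paper's first step. After that the methods diverge: the paper decomposes dyadically in $\lambda$ when $r\gtrsim t$, and in $\lambda/\sqrt t$ after the rescaling $\lambda\mapsto\lambda/\sqrt t$ when $r\lesssim t$, and invokes the van der Corput lemma so that the $\lambda$-vanishing of the amplitude is encoded in the $2^{2j}$ prefactor of each dyadic block; you instead propose the pointwise identity $\lambda=\tfrac{\sqrt{1+\lambda^2}}{t}(\Phi'_\pm\mp D)$ followed by a single integration by parts. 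The $\Phi'_\pm$ piece is handled correctly: the boundary term is of size $|t|^{-1}$, and the differentiated amplitude has $L^1$-norm bounded uniformly in $D$ thanks to \eqref{bean}.

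The gap is in the $\mp D$ piece,
\begin{equation*}
\mp\frac{D}{t}\int_0^\infty e^{i\Phi_\pm(\lambda)}\,\varphi_0(\lambda)\sqrt{1+\lambda^2}\,a_\pm(\lambda D)\,d\lambda .
\end{equation*}
The assertion that this is ``controllable when $D\leq c|t|$'' because $D/t\leq c$ does not yield $O(t^{-1})$: without using oscillation the inner integral is only $O\bigl(\min(1,D^{-1/2})\bigr)$ by \eqref{bean}, so the prefactor bound alone gives at best $O(D^{1/2}/t)\lesssim t^{-1/2}$ for $D\lesssim t$. Recovering the missing half power requires extracting $O(D^{-1})$ from the remaining integral, and for the $-$ sign the phase has a genuine interior stationary point $\lambda_c=D/\sqrt{t^2-D^2}\in\mathrm{supp}\,\varphi_0$ for \emph{every} $0<D\leq\tfrac2{\sqrt5}\,t$, not only in the window $D\sim t$. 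Consequently the localized stationary-phase computation you sketch only for the transitional window is in fact the engine of the argument over the whole range $1\lesssim D\lesssim t$; it does close --- one checks $\tfrac{D}{t}\cdot t^{-1/2}\cdot(1+\lambda_cD)^{-1/2}\lesssim t^{-1}$ by splitting $D\lessgtr\sqrt t$ --- but it is the main content of the proof, not a correction to one boundary case. The paper carries this out explicitly via a careful case analysis on the sizes of $\tilde r\lambda$ and $\partial_\lambda\Phi$ after the rescaling in the regime $r\leq t/4$; your outline leaves it implicit precisely where it does the most work.
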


\begin{remark}
The decay rate $O(1+|t|)^{-1}$ for Klein-Gordon at low frequency is same as Schr\"odinger's decay.
\end{remark}

To prove Proposition \ref{dispersive-l}, we need the following lemma:

\begin{lemma} Let $A_{\alpha}(\theta_1,\theta_2)$ and $B_{\alpha}(s,\theta_1,\theta_2)$ in  \eqref{A-al} and \eqref{B-al},
then there exists a constant $C$ such that
\begin{equation}\label{est:AB}
|A_{\alpha}(\theta_1,\theta_2)|+\int_0^\infty \big|B_{\alpha}(s,\theta_1,\theta_2)\big| ds\leq C.
\end{equation}
\end{lemma}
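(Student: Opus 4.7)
The estimate on $A_\alpha$ is essentially free: the factor $e^{i\int_{\theta_1}^{\theta_2}\alpha(\theta')\,d\theta'}$ has modulus one, the factor $e^{-i2\pi\alpha}$ has modulus one, and the two indicators never overlap in value (and each is at most $1$), so $|A_\alpha(\theta_1,\theta_2)|\leq \tfrac{1}{2\pi^2}$.

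The substantial content is the $L^1_s$-bound on $B_\alpha$. I would first note that the prefactor $-\tfrac{1}{4\pi^2}e^{-i\alpha(\theta_1-\theta_2)+i\int\alpha}$ has modulus $\tfrac{1}{4\pi^2}$, reducing the problem to bounding, uniformly in $\theta_1,\theta_2$, the quantity
\begin{equation*}
|\sin(|\alpha|\pi)|\int_0^\infty e^{-|\alpha|s}\,ds \;+\; |\sin(\alpha\pi)|\int_0^\infty \frac{|N(s,\tau)|}{D(s,\tau)}\,ds,
\end{equation*}
where $\tau:=\theta_1-\theta_2+\pi$, $N(s,\tau)=(e^{-s}-\cos\tau)\sinh(\alpha s)-i\sin\tau\cosh(\alpha s)$, and $D(s,\tau)=\cosh s-\cos\tau$. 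Recalling that after the standard gauge reduction we may take $\alpha=\Phi_{\A}\in(-1,1)\setminus\{0\}$, the first summand equals $\sin(|\alpha|\pi)/|\alpha|\leq \pi$, so only the fractional term needs work.

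For the fractional term I would use the identity $D(s,\tau)=2\bigl(\sinh^2(s/2)+\sin^2(\tau/2)\bigr)$ and split the integral at $s=1$. On $[1,\infty)$ the denominator satisfies $D(s,\tau)\gtrsim e^{s}$, while $|N(s,\tau)|\lesssim e^{|\alpha|s}$ since $|\sin\tau|\leq 1$ and $|\sinh(\alpha s)|,|\cosh(\alpha s)|\lesssim e^{|\alpha|s}$; the integrand therefore decays like $e^{-(1-|\alpha|)s}$, giving a bound $(1-|\alpha|)^{-1}$ uniformly in $\tau$. On $[0,1]$ I would decompose $e^{-s}-\cos\tau=(e^{-s}-1)+(1-\cos\tau)$ and use the elementary bounds $|e^{-s}-1|\leq s$, $|\sinh(\alpha s)|\lesssim |\alpha|s$, $|\cosh(\alpha s)|\lesssim 1$, and $1-\cos\tau=2\sin^2(\tau/2)$, $|\sin\tau|\leq 2|\sin(\tau/2)|$, to obtain
\begin{equation*}
|N(s,\tau)|\;\lesssim\; |\alpha|s^2 + s\sin^2(\tau/2) + |\sin(\tau/2)|,\qquad D(s,\tau)\;\gtrsim\; s^2+\sin^2(\tau/2).
\end{equation*}
Writing $\sigma:=|\sin(\tau/2)|\in[0,1]$, the ratio is controlled by $|\alpha|+(s\sigma^2+\sigma)/(s^2+\sigma^2)$, and the two model integrals $\int_0^1 \tfrac{s\sigma^2}{s^2+\sigma^2}\,ds=\tfrac{\sigma^2}{2}\log\tfrac{1+\sigma^2}{\sigma^2}$ and $\int_0^1 \tfrac{\sigma}{s^2+\sigma^2}\,ds=\arctan(1/\sigma)$ are both bounded uniformly in $\sigma$. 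Combining the two regimes yields the desired uniform bound.

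The only real obstacle is the joint singularity at $(s,\tau)=(0,0)$, i.e.\ when $\theta_1-\theta_2$ approaches $\pm\pi$: there both $N$ and $D$ vanish and a naive pointwise bound is useless. The identity $D=2(\sinh^2(s/2)+\sin^2(\tau/2))$ together with the sharpened split of $e^{-s}-\cos\tau$ is precisely what converts this apparent singularity into the harmless integrable kernel above, and obtaining uniformity in $\tau$ through this argument is the crux of the lemma.
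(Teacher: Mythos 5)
Your argument is correct, and it essentially coincides with what the paper actually does: the paper's proof of this lemma is the one-line remark ``consequence of direct computation,'' but the substance of that computation is spelled out later, in the heat-kernel section (the proof of Lemma~5.2, specifically the estimates \eqref{equ:ream1}--\eqref{equ:ream3}), using the same identity $\cosh s-\cos\tau=2\bigl(\sinh^2(s/2)+\sin^2(\tau/2)\bigr)$, the same split at $s=1$, and the same decomposition $e^{-s}-\cos\tau=(e^{-s}-1)+(1-\cos\tau)$. Your treatment of the small-$s$ regime is actually somewhat more transparent than the paper's: you isolate the three model ratios $|\alpha|$, $s\sigma^2/(s^2+\sigma^2)$, $\sigma/(s^2+\sigma^2)$ with $\sigma=|\sin(\tau/2)|$ and verify uniform integrability explicitly, whereas the paper compresses this into a single line that is a bit telegraphic. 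You are also right to flag the gauge normalization $\alpha\in(-1,1)\setminus\{0\}$ as a necessary hypothesis -- the formula \eqref{B-al} would otherwise give a divergent $s$-integral -- and the paper takes this reduction implicitly (it is made explicit via $\bar A=\tilde A-\lfloor\tilde A+1/2\rfloor$ only in Section~5). One cosmetic remark: the indicators $\mathbbm{1}_{[0,\pi]}$ and $\mathbbm{1}_{[\pi,2\pi]}$ do both fire at $|\theta_1-\theta_2|=\pi$, so the sharp constant in $|A_\alpha|$ is $1/(2\pi^2)$ only off that measure-zero set, but this has no bearing on the lemma.
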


\begin{proof} This is a consequence of direct computation.
\end{proof}

\begin{proof} [{\bf The proof of Proposition \ref{dispersive-l}}:] By using Proposition \ref{prop:spect},
we write the kernel
\begin{align*}
   & \int_0^\infty e^{it\sqrt{1+\lambda^2}} \varphi_0(\lambda) dE_{\sqrt{\LL_{{\A},0}}}(\lambda;x,y)\\
=&\frac{A_{\alpha}(\theta_1,\theta_2)}{\pi}\sum_{\pm}\int_0^\infty e^{it\sqrt{1+\lambda^2}} \varphi_0(\lambda)\lambda \,a_\pm(\lambda |x-y|)e^{\pm i\lambda |x-y|} \;d\lambda\\
&+\frac{1}{\pi}\sum_{\pm}\int_0^\infty e^{it\sqrt{1+\lambda^2}} \varphi_0(\lambda)\lambda \int_0^\infty a_\pm(\lambda |{\bf{n}}|)e^{\pm i\lambda |{\bf n}|}
B_{\alpha}(s,\theta_1,\theta_2) ds\;d\lambda\\
=:& I+II,
\end{align*}
where ${\bf n}=(r_1+r_2, \sqrt{2r_1r_2(\cosh s-1)})$
and
$$| \partial_r^m a_\pm(r)|\leq C_m(1+r)^{-\frac{1}2-m},\quad m\geq 0,$$
which implies
\begin{equation}\label{equ:akrder}
\big|\lambda^m \partial_\lambda^m \big(a_\pm(\lambda r)\big)\big|\leq C_m(1+\lambda r)^{-\frac{1}2},\quad m\geq 0.
\end{equation}
When $|t|\lesssim 1$, from \eqref{est:AB} and \eqref{equ:akrder}, it is easy to show \eqref{disper-l} due to the compact support of $\varphi_0$. From now on, we only need to consider
the case $t\gg1$ by symmetry.
Therefore, it suffices to show, for all $0<r$ and $t\gg1$
\begin{equation}\label{equ:mdisred}
  \Big|\int_0^\infty e^{i(t\sqrt{1+\lambda^2}\pm \lambda r)} \varphi_0(\lambda)\lambda a_\pm(\lambda r) \;d\lambda\Big|\lesssim t^{-1}.
\end{equation}
Indeed, we apply  \eqref{equ:mdisred} with $r=|x-y|$ to show
$|I|\lesssim t^{-1}$,
and use  \eqref{equ:mdisred} with $r=r_s=|{\bf n}|$ and \eqref{est:AB}  to obtain
\begin{equation*}
\begin{split}
| II|\lesssim & \Big|\int_0^\infty e^{it\sqrt{1+\lambda^2}} \varphi_0(\lambda)\lambda \int_0^\infty a_\pm(\lambda r_s)e^{\pm i\lambda r_s}
B_{\alpha}(s,\theta_1,\theta_2) ds\;d\lambda\Big|\\
\lesssim& t^{-1} \int_0^\infty
|B_{\alpha}(s,\theta_1,\theta_2)| ds \lesssim t^{-1}.
\end{split}
\end{equation*}
To prove \eqref{equ:mdisred} when $r\geq 4t$, we are further reduced to show
\begin{equation}\label{equ:mdisred1}
  \sum_{j\leq 0}\Big|\int_0^\infty e^{i(t\sqrt{1+\lambda^2}\pm \lambda r)} \phi(2^{-j}\lambda)\lambda a_\pm(\lambda r) \;d\lambda\Big|\lesssim t^{-1},\quad\forall\,r\geq 4t,\;t\gg1.
\end{equation}
By changing variable, we obtain
\begin{align*}
I_j(t,r):=&\int_0^\infty e^{i(t\sqrt{1+\lambda^2}\pm \lambda r)} \phi(2^{-j}\lambda)\lambda a_\pm(\lambda r) \;d\lambda\\
=&2^{2j}\int_0^\infty e^{i2^jt(\sqrt{2^{-2j}+\lambda^2}\pm\frac{\lambda r}{t})}\phi(\lambda)\lambda a_\pm(2^j\lambda r) \;d\lambda\\
=&2^{2j}\int_0^\infty e^{i2^jt\Phi_{j,\pm}(\lambda,t,r)}\phi(\lambda)\lambda a_\pm(2^j\lambda r) \;d\lambda,
\end{align*}
where \begin{equation}
\Phi_{j,\pm}(\lambda,t,r)=\sqrt{2^{-2j}+\lambda^2}\pm\frac{\lambda r}{t}.
\end{equation}
By direct computation, we have
$$\pa_\lambda \Phi_{j,\pm}(\lambda,t,r)=\frac{\lambda}{\sqrt{2^{-2j}+\lambda^2}}\pm\frac{r}{t},$$
then we further estimate, for $j\leq0$ and $r\geq 4t$
$$\big|\pa_\lambda \Phi_{j,\pm}\big|\geq\frac12\quad\text{and}\quad \pa_\lambda^2 \Phi_{j,\pm}=\frac{2^{-2j}}{(2^{-2j}+\lambda^2)^{\frac32}}>0.$$

We recall the following Van der Corput lemma, see \cite{Stein}.
\begin{lemma}[Van der Corput] Let $\phi$ be real-valued and smooth in $(a,b)$, and that $|\phi^{(k)}(x)|\geq1$ for all $x\in (a,b)$. Then
\begin{equation}
\left|\int_a^b e^{i\lambda\phi(x)}\psi(x)dx\right|\leq c_k\lambda^{-1/k}\left(|\psi(b)|+\int_a^b|\psi'(x)|dx\right)
\end{equation}
holds when (i) $k\geq2$ or (ii) $k=1$ and $\phi'(x)$ is monotonic. Here $c_k$ is a constant depending only on $k$.
\end{lemma}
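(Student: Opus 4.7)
My plan is to prove this by induction on $k$, following the standard argument that can be found in Stein's monograph. The quantity $M(\psi) := |\psi(b)| + \int_a^b |\psi'(x)|\,dx$ dominates the sup-norm of $\psi$ on $[a,b]$, since for any $x\in[a,b]$ we have $|\psi(x)| \le |\psi(b)| + \int_x^b |\psi'(y)|\,dy \le M(\psi)$. This observation will let me freely replace $\|\psi\|_{L^\infty(a,b)}$ by $M(\psi)$ whenever it arises, and it will survive sub-interval restrictions since $M(\psi_{|[a',b']}) \le M(\psi)$ whenever $b'=b$, and an analogous right-anchored quantity works on the complementary piece.

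\textbf{Base case $k=1$.} Here $\phi'$ is monotonic with $|\phi'|\ge 1$. I would write $e^{i\lambda\phi} = \frac{1}{i\lambda\phi'}\frac{d}{dx}e^{i\lambda\phi}$ and integrate by parts:
\[
\int_a^b e^{i\lambda\phi(x)}\psi(x)\,dx = \left[\frac{e^{i\lambda\phi}\psi}{i\lambda\phi'}\right]_a^b - \frac{1}{i\lambda}\int_a^b e^{i\lambda\phi(x)}\left(\frac{\psi'(x)}{\phi'(x)} - \frac{\psi(x)\phi''(x)}{(\phi'(x))^2}\right)dx.
\]
The boundary term is bounded by $2M(\psi)/\lambda$. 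For the remaining integral, since $|\phi'|\ge 1$ the $\psi'/\phi'$ piece contributes at most $\lambda^{-1}\int_a^b|\psi'|\,dx$. For the $\psi \phi''/(\phi')^2$ piece I use monotonicity of $\phi'$: the function $1/\phi'$ is itself monotonic, so $\int_a^b |\phi''|/(\phi')^2\,dx = |1/\phi'(b) - 1/\phi'(a)| \le 2$. Combining with $\sup|\psi|\le M(\psi)$ yields the bound $c_1 \lambda^{-1} M(\psi)$.

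\textbf{Inductive step $k\ge 2$.} Assuming the statement for $k-1$, I would use that $|\phi^{(k)}|\ge 1$ forces $\phi^{(k-1)}$ to be strictly monotonic, hence $\phi^{(k-1)}$ has at most one zero $c\in[a,b]$. For a parameter $\delta>0$ to be chosen, split $[a,b] = I_- \cup I_0 \cup I_+$ where $I_0 = [c-\delta,c+\delta]\cap[a,b]$ and $I_\pm$ are the complementary pieces. On each of $I_\pm$ the monotonicity and $|\phi^{(k)}|\ge 1$ give $|\phi^{(k-1)}|\ge \delta$. Rescaling (writing $\lambda\phi = (\lambda\delta)\cdot(\phi/\delta)$ so that the rescaled phase has $(k-1)$-th derivative of absolute value $\ge 1$), the inductive hypothesis applied on $I_\pm$ yields a bound of order $(\lambda\delta)^{-1/(k-1)}M(\psi)$ on each piece. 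On $I_0$ I use the trivial estimate $|I_0|\cdot \sup|\psi| \le 2\delta\, M(\psi)$. Adding,
\[
\left|\int_a^b e^{i\lambda\phi}\psi\,dx\right| \le c\bigl((\lambda\delta)^{-1/(k-1)} + \delta\bigr)M(\psi),
\]
and the choice $\delta = \lambda^{-1/k}$ balances the two terms to give $c_k\lambda^{-1/k}M(\psi)$.

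\textbf{Main obstacles.} The delicate point is that the classical form of the lemma usually carries $\|\psi\|_{L^\infty}$ and its total variation rather than the one-sided quantity $|\psi(b)| + \int_a^b|\psi'|$; I have to be careful that the decomposition $I_-, I_0, I_+$ produces pieces on which the inductive quantity $|\psi(\text{right endpoint})| + \int|\psi'|$ is dominated by $M(\psi)$ on the full interval. This works because if $I_+ = [c+\delta, b]$, then the right endpoint is still $b$, while for $I_-$ I apply the inductive bound with the symmetric variant (anchoring at the left endpoint), both of which are proved by the same argument. The second subtle point is the monotonicity hypothesis on $\phi'$ in the $k=1$ case, which is what makes the integral $\int|(\phi')^{-1}|'$ telescope; this hypothesis is automatically available in the inductive step since $\phi^{(k-1)}$ monotonic in the rescaling plays the role of the monotonicity of $\phi'$ in the base case.
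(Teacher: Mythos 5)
The paper does not prove this lemma; it simply states it and cites Stein as the source, so there is no proof of record in the paper to compare against. Your proposal is a correct rendering of the standard Van der Corput argument, but it deviates structurally from Stein's. In the standard treatment the induction-with-splitting is carried out only for $\psi\equiv 1$, establishing $\bigl|\int_a^b e^{i\lambda\phi}\bigr|\le c_k\lambda^{-1/k}$; the amplitude then enters in a single final step: set $F(x)=\int_a^x e^{i\lambda\phi(t)}\,dt$, observe $|F|\le c_k\lambda^{-1/k}$ on every subinterval $[a,x]$, and integrate by parts once to get $\int_a^b e^{i\lambda\phi}\psi = F(b)\psi(b)-\int_a^b F\psi'$. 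This is exactly where the asymmetric quantity $|\psi(b)|+\int_a^b|\psi'|$ comes from, and it requires no bookkeeping through the induction. You instead carry $\psi$ through every inductive step, which does work but forces you to track $M(\psi)$ under sub-interval restriction; the cleaner route avoids this entirely and makes the appearance of the right-anchored functional transparent.

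Your argument, as written, also has two small loose ends. First, the "obstacle" you flag about $I_-$ is actually benign and does not require a left-anchored variant: on $I_-=[a,c-\delta]$ the inductive hypothesis yields $|\psi(c-\delta)|+\int_a^{c-\delta}|\psi'|$, and since $|\psi(c-\delta)|\le|\psi(b)|+\int_{c-\delta}^b|\psi'|$, the total is $\le M(\psi)$ directly. Second, you implicitly assume $\phi^{(k-1)}$ vanishes somewhere in $(a,b)$. If it does not, the set $\{|\phi^{(k-1)}|<\delta\}$ is still a subinterval of length at most $2\delta$ (by monotonicity with slope $\ge 1$), now abutting an endpoint, and the same estimate runs with a two-piece split; this case should be mentioned. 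Your observation that monotonicity of $\phi'$ in the base case is forced, in the descent from $k=2$, by $|\phi''|\ge 1$ having constant sign is correct and necessary.
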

And so, by Van-der Corput Lemma   and \eqref{equ:akrder}, one has
$$|I_j(t,r)|\lesssim 2^{2j}(2^jt)^{-1}\int_{\frac12}^2\big|\pa_\lambda\big[\phi(\lambda)\lambda a_\pm(2^j\lambda r)  \big]\big|\,d\lambda\lesssim 2^jt^{-1}.$$
Next, we turn to consider the case that  $\frac{t}4\leq r\leq4t.$ It is easy to see that
$$|\partial_\lambda^2\Phi_{j,\pm}|=\frac{2^{-2j}}{(2^{-2j}+\lambda^2)^\frac32}\gtrsim 2^j.$$
Hence,  by Van-der Corput Lemma and \eqref{equ:akrder} again and the fact $t\sim r$, we obtain
\begin{align*}
  |I_j(t,r)|\lesssim  & 2^{2j}(2^{2j}t)^{-\frac12}\int_{\frac12}^2\big|\pa_\lambda\big[\phi(\lambda)\lambda a_\pm(2^j\lambda r)  \big]\big|\,d\lambda\\
  \lesssim&2^jt^{-\frac12}(2^jr)^{-\frac12}\lesssim 2^{j/2}t^{-1}.
  \end{align*}
Therefore,
$$\sum_{j\leq0}|I_j(t,r)|\lesssim t^{-1}\sum_{j\leq0} 2^{j/2}\lesssim t^{-1}.$$
And so \eqref{equ:mdisred1} follows. \\
Finally, we prove \eqref{equ:mdisred} when $r\leq\frac{t}4$. By variable changes, it suffices to prove
\begin{equation}\label{equ:mdisred'}
  \Big|\int_0^\infty e^{i(\sqrt{t^2+t\lambda^2}\pm \lambda \tilde{p})} \varphi_0(\lambda/\sqrt{t})\lambda a_\pm(\lambda \tilde{p}) \;d\lambda\Big|\lesssim 1
\end{equation}
where $\tilde{r}=r/\sqrt{t}$. We divide it into two pieces and we are reduced to prove
\begin{equation}\label{equ:mdisred'-1}
  \Big|\int_0^\infty e^{i(\sqrt{t^2+t\lambda^2}\pm \lambda \tilde{r})} \varphi_0(\lambda/\sqrt{t})\lambda a_\pm(\lambda \tilde{r}) \varphi_0(\lambda) \;d\lambda\Big|\lesssim 1
\end{equation}
and
\begin{equation}\label{equ:mdisred'-2}
  \Big|\sum_{m\geq 1}\int_0^\infty e^{i(\sqrt{t^2+t\lambda^2}\pm \lambda \tilde{r})} \varphi_0(\lambda/\sqrt{t})\lambda a_\pm(\lambda \tilde{r})\phi(\frac{\lambda}{2^m}) \;d\lambda\Big|\lesssim 1.
\end{equation}
It is easy to prove the \eqref{equ:mdisred'-1} due to $\lambda\leq 2$ and \eqref{equ:akrder}. In the rest of the proof,  we prove \eqref{equ:mdisred'-2}.
We first consider \eqref{equ:mdisred'-2} with $+$ sign. In this case,
$$
e^{i\sqrt{t^2+t\lambda^2} + i \tilde{r} \lambda}= (L^+)^N (e^{i\sqrt{t^2+t\lambda^2} + i \tilde{r} \lambda}), \quad L^+ = \frac1i\big(\frac{t\lambda}{\sqrt{t^2+t\lambda^2}} + \tilde{r}\big)^{-1} \frac{\partial}{\partial \lambda}.
$$
Due to the support of $\varphi_0$, it gives $0<\lambda<2\sqrt{t}$. Then  for $\ell\geq0$ and $t\gg1$, it follows
\begin{equation}\label{I-bp4}
\begin{split}
\Big|\partial_\lambda^{\ell}\Big[\big(\frac{t\lambda}{\sqrt{t^2+t\lambda^2}}+\tilde{r}\big)^{-1}\Big]\Big|\leq C_\ell \lambda^{-1-\ell}.
\end{split}
\end{equation}
By using \eqref{equ:akrder} and \eqref{I-bp4} with $\ell=2$,
we obtain
$$
\text{LHS of} \,\eqref{equ:mdisred'-2} \lesssim \sum_{m \geq 1} \int_{\lambda \sim 2^m} \lambda^{-2} \, d\lambda \leq C.
$$
Now we  treat  \eqref{equ:mdisred'-2} with $-$ sign. Let $\Phi(\lambda,\tilde{r})=\sqrt{t^2+t\lambda^2}-\tilde{r}\lambda$.
We divide \eqref{equ:mdisred'-2} into two pieces:
\begin{equation*}
\begin{split}
II_1=&\Big|\sum_{m\geq 1}\int_0^\infty e^{i\Phi(\lambda,\tilde{r})} \varphi_0(\lambda/\sqrt{t})\lambda a_\pm(\lambda \tilde{r})\phi\big(\frac{\lambda}{2^m}\big)
\varphi_0(8\tilde{r} \lambda) d\lambda\Big|, ~\\II_2=&\Big|\int_0^\infty e^{i\Phi(\lambda,\tilde{r})} \varphi_0(\lambda/\sqrt{t})\lambda a_\pm(\lambda \tilde{r})
\left(1-\varphi_0(\lambda)\right) \big( 1 - \varphi_0(8\tilde{r} \lambda)
\big) d\lambda\Big|.
\end{split}
\end{equation*}
 We first
consider $II_1$. The integrand in $II_1$ vanishes
when $\tilde{r}>1/8$ due to the supports of $\phi$ and $\varphi_0$ (which implies $\lambda \leq (8 \tilde{r})^{-1}$ and $\lambda \geq
1$). Thus we only consider $1\leq \lambda<2\sqrt{t}$ and $\tilde{r}\leq1/8$, therefore $|\partial_\lambda\Phi| =
\frac{t\lambda}{\sqrt{t^2+t\lambda^2}} - \tilde{r} \geq \frac1{\sqrt{5}}\lambda- \tilde{r} \geq\frac1{100}\lambda$. As in \eqref{I-bp4},
on the support of $\varphi_0(\lambda/\sqrt{t})$, for $\ell\geq0$ and $t\gg1$, we also use the induction argument to obtain
\begin{equation}\label{I-bp4'}
\begin{split}
\Big|\partial^\ell_\lambda\Big[\big(\frac{t\lambda}{\sqrt{t^2+t\lambda^2}}-\tilde{r}\big)^{-1}\Big]\Big|\leq C_\ell \Big(\frac{t\lambda}{\sqrt{t^2+t\lambda^2}}-\tilde{r}\Big)^{-1}\lambda^{-\ell}\leq C_\ell\lambda^{-1-\ell}.
\end{split}
\end{equation}
 Define the operator
$L=L(\lambda,\tilde{r})=(\frac{t\lambda}{\sqrt{t^2+t\lambda^2}}-\tilde{r} )^{-1}\partial_\lambda$. By using
 \eqref{equ:akrder} and integration by parts again, we obtain
\begin{equation*}
\begin{split}
II_1\leq&\sum_{m\geq1}\Big|\int_0^{\infty} L^{N}
\big(e^{i\Phi(\lambda,\tilde{r})} \big)\Big[\varphi_0(\lambda/\sqrt{t})\lambda a_\pm(\lambda \tilde{r})\phi(\frac{\lambda}{2^m})
\varphi_0(8\tilde{r} \lambda)  \Big]d\lambda\Big|
\\\leq &C_N\sum_{m\geq1}\int_{\lambda\sim
2^{m}}\lambda^{1-2N}d\lambda\leq C_N.
\end{split}
\end{equation*}
Finally we estimate $II_2$. Based on the size of $\partial_\lambda \Phi$, we make a further decomposition of $II_2$
\begin{equation*}
\begin{split}
II_2\leq &\Big|\int_0^{\infty} e^{i\Phi(\lambda,\tilde{r})} \varphi_0(\lambda/\sqrt{t})\lambda a_\pm(\lambda \tilde{r})
\left(1-\varphi_0(\lambda)\right)\varphi_0(\frac{t\lambda}{\sqrt{t^2+t\lambda^2}}-\tilde{r}) \big( 1 - \varphi_0(8\tilde{r} \lambda) \big) \, d\lambda\Big|\\
&+\sum_{m\geq1}\Big|\int_0^{\infty}
e^{i\Phi(\lambda,\tilde{r})} \varphi_0(\lambda/\sqrt{t})\lambda a_\pm(\lambda \tilde{r})
\big(1-\varphi_0(\lambda)\big)\phi\big(\frac{\frac{t\lambda}{\sqrt{t^2+t\lambda^2}}-\tilde{r}}{2^m}\big)\big( 1 - \varphi_0(8\tilde{r} \lambda) \big) \, d\lambda\Big|\\:=&II_2^1+II_2^2.
\end{split}
\end{equation*}
Due to  the compact support of the second $\varphi_0$ factor in $II_2^1$, one has
\begin{equation}\label{II_2}
\Big|\frac{t\lambda}{\sqrt{t^2+t\lambda^2}}-\tilde{r}\Big|\leq 1.
\end{equation}
If $\tilde{r} \leq 10$, from $\lambda<2\sqrt{t}$ again, we must have $\lambda \leq 100$ otherwise the integrand of $II_2^1$ vanishes.
Then we see that $II_2^1$ is uniformly bounded. If $\tilde{r} \geq
10$, from \eqref{II_2} and $\lambda<2\sqrt{t}$, we have $\tilde{r}\sim\lambda$.
Hence,  by letting $\lambda'=\lambda/\sqrt{1+\lambda^2}$ and using \eqref{equ:akrder} with $\alpha = 0$, it follows that
\begin{equation*}
\begin{split}
II_2^1&\le\int_{\{\lambda<2\sqrt{t}:|\frac{t\lambda}{\sqrt{t^2+t\lambda^2}}-\tilde{r}|\leq 1\}}\lambda^{n-1}(1+\tilde{r}\lambda)^{-\frac{n-1}2}d\lambda
\\&\leq C \sqrt{t}\int_{\{\lambda<2:|\frac{\lambda}{\sqrt{1+\lambda^2}}-\frac{\tilde{r}}{\sqrt{t}}|\leq 1/\sqrt{t}\}}d\lambda\\&
\leq C \sqrt{t} \int_{\{\lambda<2:|\lambda'-\frac{\tilde{r}}{\sqrt{t}}|\leq 1/\sqrt{t}\}}(1+\lambda^2)^{3/2}d\lambda'\leq C.
\end{split}
\end{equation*}
Now we consider $II_2^2$. We estimate \begin{equation*}
\begin{split}
II_2^2\leq&\sum_{m\geq1}\Big|\int_0^\infty L^{N}
\big(e^{i\Phi(\lambda,\tilde{r})}\big) \varphi_0(\lambda/\sqrt{t})\lambda a_\pm(\lambda \tilde{r})\\&\qquad \qquad\big(1-\varphi_0(\lambda)\big)\phi\big(2^{-m}(\frac{t\lambda}{\sqrt{t^2+t\lambda^2}}-\tilde{r})\big)\big( 1 - \varphi_0(8\tilde{r} \lambda) \big) \, d\lambda\Big|.
\end{split}
\end{equation*}
Let $$b(\lambda)=\lambda a_\pm(\lambda \tilde{r})\big(1-\varphi_0(\lambda)\big)\phi\big(2^{-m}(\frac{t\lambda}{\sqrt{t^2+t\lambda^2}}-\tilde{r})\big)\big( 1 - \varphi_0(8\tilde{r} \lambda) \big),$$
then on the support of $b$ with $\lambda\geq 1/2$, we use \eqref{equ:akrder} to obtain
$$|\partial_\lambda ^\alpha b|\leq C_{\alpha}\lambda (1+\tilde{r}\lambda)^{-1/2}.$$
Hence from the first inequality of \eqref{I-bp4'}, we obtain
\begin{equation*}
\begin{split}
|(L^*)^N [b(\lambda)]|&\leq C_N 2^{-mN}\lambda (1+\tilde{r}\lambda)^{-1/2}.
\end{split}
\end{equation*}
Therefore we use integration by parts to obtain
\begin{equation*}
\begin{split}
II_2^2 \leq C_N\sum_{m\geq1}2^{-mN}\int_{\{\lambda<2\sqrt{t},|\frac{t\lambda}{\sqrt{t^2+t\lambda^2}}-\tilde{r}|\sim 2^m\}}\lambda(1+\tilde{r}\lambda)^{-\frac12}d\lambda.
\end{split}
\end{equation*}
If $\tilde{r}\leq 2^{m+1}$, since $|\frac{t\lambda}{\sqrt{t^2+t\lambda^2}}-\tilde{r}|\sim 2^m$, then $\lambda \leq 2^{m+2}$. One has
\begin{equation*}
\begin{split}
II_2^2\leq C_N\sum_{m\geq 1}2^{-mN}2^{(m+2)2}\leq C.
\end{split}
\end{equation*}
If $\tilde{r}\geq 2^{m+1}$, we
have $\lambda\sim \tilde{r}$, thus we choose $N$ large enough such that
\begin{equation*}
\begin{split}
II_2^2\leq C_N t^{1/2}  \sum_{m\geq1}2^{-mN} \int_{\{\lambda<2:|\frac{\lambda}{\sqrt{1+\lambda^2}}-\frac{\tilde{r}}{\sqrt{t}}|\sim \frac{2^m}{\sqrt{t}}\}}d\lambda
\leq C_N\sum_{m\geq1}2^{-mN}2^{m}\leq C
\end{split}
\end{equation*}
which concludes the proof of
\eqref{equ:mdisred'}.
Thus, we complete the proof of  Proposition \ref{dispersive-l}.

\end{proof}\vspace{0.2cm}

\begin{proposition}[Dispersive estimates for high frequency]\label{dispersive-h}
 For all integers $k\geq1$, the kernel estimate
\begin{equation}\label{disper}
\begin{split}
&\Big|\int_0^\infty e^{it\sqrt{1+\lambda^2}} \phi(2^{-k}\lambda)  dE_{\sqrt{\LL_{{\A},0}}}(\lambda;x,y)
\Big|\\ \leq& C 2^{k\frac{3+\eta}2}\left(2^{-k}+|t|\right)^{-\frac{1+\eta}2}.
\end{split}
\end{equation}
holds for $0\leq \eta\leq 1$ and a constant $C$ independent of $k$, $t$ and points $x,y\in \R^2$.
\end{proposition}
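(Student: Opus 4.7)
The plan is to substitute the spectral measure representation from Proposition 1.4 into the integral, reduce to a one-variable oscillatory integral in $\lambda$, establish the two endpoint cases $\eta=0$ (wave-like) and $\eta=1$ (Schr\"odinger-like) separately, and interpolate pointwise via $|J|=|J|^{1-\eta}|J|^\eta$. After plugging in \eqref{spect-g}--\eqref{spect-d} and using the uniform bounds $|A_\alpha(\theta_1,\theta_2)|\lesssim 1$ and $\int_0^\infty|B_\alpha(s,\theta_1,\theta_2)|\,ds\lesssim 1$ (cf. \eqref{est:AB}) together with Fubini in the diffractive term, the statement reduces to showing that
\begin{equation*}
J_\pm^k(t,r):=\int_0^\infty e^{i(t\sqrt{1+\lambda^2}\pm\lambda r)}\,\phi(2^{-k}\lambda)\,\lambda\,a_\pm(\lambda r)\,d\lambda
\end{equation*}
satisfies $|J_\pm^k(t,r)|\leq C\,2^{k(3+\eta)/2}(2^{-k}+|t|)^{-(1+\eta)/2}$ uniformly in $r\geq 0$, $k\geq 1$, and (by time reversal) $t>0$.

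For the wave endpoint $\eta=0$, the range $t\leq 2^{-k}$ is handled by the trivial bound $|J_\pm^k|\lesssim\int_{\lambda\sim 2^k}\lambda\,d\lambda\lesssim 2^{2k}$, which matches $2^{3k/2}(2^{-k})^{-1/2}=2^{2k}$. For $t\geq 2^{-k}$, I apply the Van der Corput lemma to the phase $\Phi(\lambda)=t\sqrt{1+\lambda^2}\pm\lambda r$ using $|\Phi''(\lambda)|=t(1+\lambda^2)^{-3/2}\gtrsim t\,2^{-3k}$ on $\lambda\sim 2^k$, together with the uniform estimate $\int_{\lambda\sim 2^k}|\partial_\lambda[\phi(2^{-k}\lambda)\lambda a_\pm(\lambda r)]|\,d\lambda\lesssim 2^k$ (an easy consequence of \eqref{bean}); this gives $|J_\pm^k|\lesssim(t\,2^{-3k})^{-1/2}\cdot 2^k=2^{3k/2}t^{-1/2}$.

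For the Schr\"odinger endpoint $\eta=1$, only $t\geq 2^{-k}$ is nontrivial, and I split on $r$. Using that $\lambda/\sqrt{1+\lambda^2}\geq 1/\sqrt{2}$ on $\lambda\sim 2^k$ (here $k\geq 1$), we have $|\partial_\lambda\Phi|\gtrsim\max(t,r)\gtrsim t$ whenever $r\leq t/10$ or $r\geq 10t$, and iterated integration by parts produces $|J_\pm^k|\lesssim 2^{2k}t^{-N}$ for any $N\geq 1$. In the critical intermediate range $r\in[t/10,10t]$ the stationary point $\lambda_*=r/\sqrt{t^2-r^2}$ of $\Phi$ may lie in the support $\lambda\sim 2^k$ (which happens exactly when $|t-r|\sim t\,2^{-2k}$). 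Here the trick is to factor $\lambda a_\pm(\lambda r)=2^k(1+2^k r)^{-1/2}\widetilde\chi(\lambda)$ and check directly using \eqref{bean} that $\widetilde\chi$ is smooth with $\|\widetilde\chi\|_\infty+\|\widetilde\chi\|_{BV}\lesssim 1$ uniformly in $(r,k)$; Van der Corput then yields
\begin{equation*}
|J_\pm^k|\lesssim 2^k(1+2^k r)^{-1/2}(t\,2^{-3k})^{-1/2}\lesssim 2^{2k}/\sqrt{rt}\lesssim 2^{2k}/t.
\end{equation*}
The main difficulty is this renormalization step, which is what extracts the extra $(2^k t)^{-1/2}$ factor from the amplitude decay $a_\pm(\lambda r)\sim(2^k r)^{-1/2}$ at the stationary point and thereby converts the wave rate $2^{3k/2}/\sqrt{t}$ into the Schr\"odinger rate $2^{2k}/t$. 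With both endpoints in hand, the intermediate $\eta\in(0,1)$ follows immediately from the pointwise geometric-mean interpolation $|J_\pm^k|\leq|J_\pm^k|^{1-\eta}|J_\pm^k|^\eta$.
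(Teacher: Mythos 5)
Your proof is correct and follows essentially the same route as the paper: reduce via \eqref{est:AB} and Fubini to the scalar oscillatory integrals $J^k_\pm$, dispose of $|t|\lesssim 2^{-k}$ trivially, and for large $t$ apply Van der Corput while using the amplitude decay $a_\pm(\lambda r)\sim(1+\lambda r)^{-1/2}$ in the near-stationary regime $r\sim t$ to upgrade the wave rate to the Schr\"odinger rate. The paper packages this as a single unified bound $2^{k(3+\eta)/2}(2^{-k}+|t|)^{-(1+\eta)/2}$ via \eqref{equ:highconseq} rather than proving the $\eta=0,1$ endpoints and interpolating pointwise, but the underlying estimates and case split (non-stationary $r\notin[t/100,100t]$ vs.\ critical $r\sim t$) are identical.
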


\begin{proof}
By Proposition \ref{prop:spect}, similarly as above,
we are reduced to show
\begin{equation}\label{equ:redshowhaig}
  \Big|\int_0^\infty e^{i(t\sqrt{1+\lambda^2}\pm \lambda r)}  \phi(2^{-k}\lambda)\lambda a_\pm(\lambda r) \;d\lambda\Big|\lesssim
  2^{k\frac{3+\eta}2}\left(2^{-k}+|t|\right)^{-\frac{1+\eta}2},\quad\forall\,r>0.
\end{equation}
When $|t|\leq 2^{-k},$ \eqref{equ:redshowhaig} follows by the compact support of $\phi$. Hence, we only need to consider the case that $t>2^{-k}$ by symmetry. By changing variable,
it gives
$$\int_0^\infty e^{i(t\sqrt{1+\lambda^2}\pm \lambda r)}  \phi(2^{-k}\lambda)\lambda a_\pm(\lambda r) \;d\lambda
=2^{2k}\int_0^\infty e^{i2^kt(\sqrt{2^{-2k}+\lambda^2}\pm \frac{\lambda r}{t})}  \phi(\lambda)\lambda a_\pm(2^k\lambda r) \;d\lambda.$$
  Hence, it suffices to show for any $0\leq\eta\leq1$
\begin{equation}\label{equ:highfreds}
  \Big|\int_0^\infty e^{i2^kt(\sqrt{2^{-2k}+\lambda^2}\pm \frac{\lambda r}{t})}  \phi(\lambda)\lambda a_\pm(2^k\lambda r) \;d\lambda\Big|\lesssim
  2^{-\frac{k(1-\eta)}2}t^{-\frac{1+\eta}2},\quad\forall\,r>0,
\end{equation}
which is also a consequence of
\begin{equation}\label{equ:highconseq}
  \Big|\int_0^\infty e^{i2^kt(\sqrt{2^{-2k}+\lambda^2}\pm \frac{\lambda r}{t})}  \phi(\lambda)\lambda a_\pm(2^k\lambda r) \;d\lambda\Big|\lesssim
  2^{-\frac{k}{2}}t^{-\frac12}(1+2^{-k}t)^{-\frac12}.
\end{equation}
Now, we turn to prove \eqref{equ:highconseq}. Denote
$$I_k(t,x):=\int_0^\infty e^{i2^kt\Phi_k(\lambda,t,r)}  \phi(\lambda)\lambda a_\pm(2^k\lambda r) \;d\lambda,$$
with $\Phi_{k,\pm}(\lambda,t,r)=\sqrt{2^{-2k}+\lambda^2}\pm \frac{\lambda r}{t}.$
Note that
$$\partial_\lambda\Phi_{k,\pm}=\frac{\lambda}{\sqrt{2^{-2k}+\lambda^2}}\pm\frac{r}{t},$$
for $r\geq 100t$ or $r\leq\frac{t}{100}$ we get
$$|\partial_\lambda\Phi_{k,\pm}|\geq\frac1{100}\quad\text{and}\quad \partial_\lambda^2\Phi_{k,\pm}=\frac{2^{-2k}}{(2^{-2k}+\lambda^2)^{\frac32}}>0.$$
In this case, by Van-der Corput Lemma and \eqref{equ:akrder}, we get
\begin{align*}
  \big|\partial_\lambda\Phi_{k,\pm}\big|\lesssim & (2^kt)^{-1}\int_\frac12^2\big|\pa_\lambda\big[ \phi(\lambda)\lambda a_\pm(2^k\lambda r) \big]\big|\;d\lambda \lesssim2^{-\frac{k}{2}}t^{-\frac12}(1+2^{-k}t)^{-\frac12}.
\end{align*}
We finally consider the case that  $\frac{t}{100}\leq r\leq 100t.$   It is easy to check that
$$|\partial_\lambda^2\Phi_{k,\pm}|=\frac{2^{-2k}}{(2^{-2k}+\lambda^2)^\frac32}\gtrsim 2^{-2k}.$$
Hence, if $t2^{-k}\geq 1$, by Van-der Corput Lemma and \eqref{equ:akrder}, we obtain
\begin{align*}
  |I_k(t,r)|\lesssim  & (2^{-k}t)^{-\frac12}\int_{\frac12}^2\big|\pa_\lambda\big[\phi(\lambda)\lambda a_\pm(2^k\lambda r)  \big]\big|\,d\lambda\\
  \lesssim&2^{\frac k2}t^{-\frac12}(2^kr)^{-\frac12}\lesssim 2^{-\frac{k}{2}}t^{-\frac12}(1+2^{-k}t)^{-\frac12}.
  \end{align*}
Otherwise $t2^{-k}\leq 1$, we directly have
\begin{align*}
  |I_k(t,r)|\lesssim  & \int_{\frac12}^2\big|\big[\phi(\lambda)\lambda a_\pm(2^k\lambda r)  \big]\big|\,d\lambda
  \lesssim (2^kr)^{-\frac12}\lesssim 2^{-\frac{k}{2}}t^{-\frac12}(1+2^{-k}t)^{-\frac12}.
  \end{align*}
Thus, we complete the proof of Proposition \ref{dispersive-h}.
\end{proof}

As consequences of Proposition \ref{dispersive-l} and Proposition \ref{dispersive-h} respectively, we
immediately have
\begin{proposition}\label{prop:Dispersive-l} Let $U^{\mathrm{low}}(t)$ be defined by
\begin{equation}\label{equ:lowparkg}
 U^{\mathrm{low}}(t):=\int_0^\infty e^{it\sqrt{1+\lambda^2}} \varphi_0(\lambda) dE_{\sqrt{\LL_{{\A},0}}}(\lambda).
\end{equation}
Then there exists a constant $C$ independent of $t, x, y$  such that
\begin{equation}\label{Dispersive}
\|U^\mathrm{low}(t)(U^\mathrm{low})^*(s)\|_{L^1\rightarrow L^\infty}\leq C
(1+|t-s|)^{-1}.
\end{equation}
\end{proposition}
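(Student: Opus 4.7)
The plan is to reduce the operator norm bound to a pointwise kernel bound and then invoke Proposition \ref{dispersive-l}. Since $\LL_{{\A},0}$ is self-adjoint on $L^2(\R^2)$ and $\varphi_0$ is real-valued, the functional calculus gives
\[
\big(U^\mathrm{low}(s)\big)^* = \int_0^\infty e^{-is\sqrt{1+\lambda^2}}\,\varphi_0(\lambda)\,dE_{\sqrt{\LL_{{\A},0}}}(\lambda).
\]
Composing with $U^\mathrm{low}(t)$ and using that the spectral measure is a projection-valued measure (so that $dE(\lambda)\,dE(\mu)=\delta(\lambda-\mu)\,dE(\lambda)$), I would obtain
\[
U^\mathrm{low}(t)\,\big(U^\mathrm{low}(s)\big)^* = \int_0^\infty e^{i(t-s)\sqrt{1+\lambda^2}}\,\varphi_0^2(\lambda)\,dE_{\sqrt{\LL_{{\A},0}}}(\lambda).
\]

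The next step is to bound the Schwartz kernel of this operator pointwise. Observe that the cutoff $\varphi_0^2$ enjoys exactly the same properties used in the proof of Proposition \ref{dispersive-l}, namely it is a smooth, real, compactly supported function equal to $1$ near $\lambda=0$. An inspection of that proof shows that only the support and smoothness of the low-frequency cutoff were used in the stationary phase estimates (together with the bounds \eqref{est:AB} and \eqref{equ:akrder}); nothing is specific to $\varphi_0$ as opposed to $\varphi_0^2$. Therefore Proposition \ref{dispersive-l} applies verbatim with $\varphi_0$ replaced by $\varphi_0^2$ and the time $t$ replaced by $t-s$, yielding
\[
\Big|\int_0^\infty e^{i(t-s)\sqrt{1+\lambda^2}}\,\varphi_0^2(\lambda)\,dE_{\sqrt{\LL_{{\A},0}}}(\lambda;x,y)\Big| \leq C\,(1+|t-s|)^{-1}
\]
uniformly in $x,y\in\R^2$.

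Finally, since the $L^1\to L^\infty$ operator norm of an integral operator equals the $L^\infty$ norm of its Schwartz kernel, the pointwise bound above immediately gives \eqref{Dispersive}. The only conceptual point to verify carefully is the interchange of the spectral integrals in the composition, which is justified because both $U^\mathrm{low}(t)$ and $(U^\mathrm{low}(s))^*$ are bounded $L^2\to L^2$ operators defined through the bounded Borel functions $e^{\pm i\tau\sqrt{1+\lambda^2}}\varphi_0(\lambda)$ of the self-adjoint operator $\sqrt{\LL_{{\A},0}}$; there is no real obstacle here, and I anticipate the entire argument to be short once Proposition \ref{dispersive-l} is in hand.
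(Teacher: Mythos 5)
Your argument is correct and is precisely the reasoning the paper leaves implicit when it states that Proposition~\ref{prop:Dispersive-l} is an immediate consequence of Proposition~\ref{dispersive-l}: compose via the functional calculus to obtain the kernel $\int_0^\infty e^{i(t-s)\sqrt{1+\lambda^2}}\varphi_0^2(\lambda)\,dE_{\sqrt{\LL_{{\A},0}}}(\lambda;x,y)$, observe that the proof of Proposition~\ref{dispersive-l} uses only the smoothness and compact support of the low-frequency cutoff (not that it is $\varphi_0$ rather than $\varphi_0^2$), and pass from the pointwise kernel bound to the $L^1\to L^\infty$ operator norm. No gaps.
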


\begin{proposition}\label{prop:Dispersive-h} Let $U_{k}(t)$ be defined by
\begin{equation}\label{U-k}
  U_k(t):=\int_0^\infty e^{it\sqrt{1+\lambda^2}} \phi(2^{-k}\lambda) dE_{\sqrt{\LL_{{\A},0}}}(\lambda),\quad k\in\Z.
\end{equation}
Then there exists a constant $C$ independent of $t, x, y$ for all
$k\geq 1$ such that
\begin{equation}\label{Dispersive123}
\|U_{k}(t)U^*_{k}(s)\|_{L^1\rightarrow L^\infty}\leq C
2^{k\frac{3+\eta}2}(2^{-k}+|t-s|)^{-\frac{1+\eta}2}
\end{equation}
where $0\leq \eta\leq 1$.
\end{proposition}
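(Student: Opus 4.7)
The plan is to deduce Proposition \ref{prop:Dispersive-h} as an essentially immediate corollary of the pointwise kernel estimate of Proposition \ref{dispersive-h}, once the composition $U_k(t)U_k^*(s)$ is rewritten via the functional calculus. First, using the self-adjointness of $\LL_{{\A},0}$, I would compute
\begin{equation*}
U_k^*(s)=\int_0^\infty e^{-is\sqrt{1+\lambda^2}}\phi(2^{-k}\lambda)\,dE_{\sqrt{\LL_{{\A},0}}}(\lambda),
\end{equation*}
and then invoke the multiplicativity of the spectral projections, $dE_{\sqrt{\LL_{{\A},0}}}(\lambda)\,dE_{\sqrt{\LL_{{\A},0}}}(\mu)=\delta(\lambda-\mu)\,dE_{\sqrt{\LL_{{\A},0}}}(\lambda)$, to obtain
\begin{equation*}
U_k(t)U_k^*(s)=\int_0^\infty e^{i(t-s)\sqrt{1+\lambda^2}}\,\phi^2(2^{-k}\lambda)\,dE_{\sqrt{\LL_{{\A},0}}}(\lambda).
\end{equation*}
This reduces the operator bound to a kernel bound for a single Klein-Gordon propagator with a Littlewood-Paley cutoff concentrated at frequency $\lambda\sim 2^k$.

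Next, I would apply Proposition \ref{dispersive-h} with $\phi$ replaced by $\phi^2$. The function $\phi^2$ still belongs to $C_c^\infty(\R\setminus\{0\})$, takes values in $[0,1]$, and is supported in $[1/2,2]$; in particular the stationary-phase and Van der Corput arguments used in the proof of Proposition \ref{dispersive-h} depend only on such qualitative properties (support away from $0$, and uniform $C^\infty$ bounds), and therefore carry over verbatim. This delivers the pointwise kernel estimate
\begin{equation*}
\bigl|\bigl(U_k(t)U_k^*(s)\bigr)(x,y)\bigr|\leq C\,2^{k\frac{3+\eta}2}\bigl(2^{-k}+|t-s|\bigr)^{-\frac{1+\eta}2},
\end{equation*}
uniformly in $x,y\in\R^2$ and $k\geq 1$, for every $0\leq\eta\leq 1$.

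Finally, since an integral operator whose Schwartz kernel is bounded pointwise by a constant $M$ maps $L^1(\R^2)$ to $L^\infty(\R^2)$ with operator norm at most $M$, the claimed estimate \eqref{Dispersive123} follows at once. The proof contains no genuinely new difficulty beyond Proposition \ref{dispersive-h}; the only point one must verify is the innocent observation that $\phi^2$ satisfies the same support and smoothness hypotheses as $\phi$, so that the high-frequency dispersive bound applies without modification. Hence the main technical obstacle in this entire passage is really the high-frequency stationary-phase analysis of Proposition \ref{dispersive-h} itself, which has already been established.
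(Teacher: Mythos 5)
Your argument is correct and is exactly the intended one: the paper presents Proposition \ref{prop:Dispersive-h} as an immediate consequence of Proposition \ref{dispersive-h} without writing out a proof, and the identities you use — $U_k^*(s)$ via self-adjointness, $U_k(t)U_k^*(s)=\int e^{i(t-s)\sqrt{1+\lambda^2}}\phi^2(2^{-k}\lambda)\,dE_{\sqrt{\LL_{{\A},0}}}(\lambda)$ by the projection-valued measure property, the observation that $\phi^2$ satisfies the same support and smoothness hypotheses as $\phi$, and the standard $L^1\to L^\infty$ bound from a pointwise kernel bound — are precisely what makes it immediate. No gap, same approach.
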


\section{Strichartz estimates for purely magnetic Klein-Gordon}

In this section, we show the Strichartz estimates in Theorem
\ref{thm:Stri} with $a\equiv0$. The argument is standard once one proved the
dispersive estimates. For convenience, we sketch the main steps, for example, see \cite{KT, ZZ2, ZZ19}.

\subsection{Semiclassical type Strichartz estimates}
We need a variety of the abstract Keel-Tao's Strichartz estimates
theorem. This is an analogue of the semiclassical Strichartz
estimates for Schr\"odinger in \cite{KTZ, Zworski}, see also in \cite{ZZ19}.

\begin{proposition}\label{prop:semi}
Let $(X,\mathcal{M},\mu)$ be a $\sigma$-finite measured space and
$U: \mathbb{R}\rightarrow B(L^2(X,\mathcal{M},\mu))$ be a weakly
measurable map satisfying, for some constants $C$, $\alpha\geq0$,
$\sigma, h>0$,
\begin{equation}\label{md}
\begin{split}
\|U(t)\|_{L^2\rightarrow L^2}&\leq C,\quad t\in \mathbb{R},\\
\|U(t)U(s)^*f\|_{L^\infty}&\leq
Ch^{-\alpha}(h+|t-s|)^{-\sigma}\|f\|_{L^1}.
\end{split}
\end{equation}
Then for every pair $q,p\in[1,\infty]$ such that $(q,p,\sigma)\neq
(2,\infty,1)$ and
\begin{equation*}
\frac{1}{q}+\frac{\sigma}{p}\leq\frac\sigma 2,\quad q\ge2,
\end{equation*}
there exists a constant $\tilde{C}$ only depending on $C$, $\sigma$,
$q$ and $r$ such that
\begin{equation}\label{equ:gstri}
\Big(\int_{\R}\|U(t) u_0\|_{L^p}^q dt\Big)^{\frac1q}\leq \tilde{C}
\Lambda(h)\|u_0\|_{L^2}
\end{equation}
where $\Lambda(h)=h^{-(\alpha+\sigma)(\frac12-\frac1p)+\frac1q}$.
\end{proposition}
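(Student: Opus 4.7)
The plan is to adapt the Keel-Tao $TT^*$ machinery to the semiclassical setting, carefully tracking the dependence on $h$. By $TT^*$ duality, establishing \eqref{equ:gstri} is equivalent to proving the bilinear estimate
\begin{equation*}
\Big|\iint \langle U(t)U(s)^*F(s),G(t)\rangle_{L^2}\,ds\,dt\Big|\lesssim \Lambda(h)^2\,\|F\|_{L^{q'}_tL^{p'}_x}\|G\|_{L^{q'}_tL^{p'}_x}.
\end{equation*}
The two hypotheses in \eqref{md}, combined with Riesz-Thorin interpolation, yield for every $p\in[2,\infty]$ the interpolated bound
\begin{equation*}
\|U(t)U(s)^*\|_{L^{p'}\to L^p}\leq C\, h^{-\alpha(1-2/p)}\bigl(h+|t-s|\bigr)^{-\sigma(1-2/p)},
\end{equation*}
so that the problem reduces to a scalar estimate in the time variable with kernel $K_h(\tau)=h^{-\alpha(1-2/p)}(h+|\tau|)^{-\sigma(1-2/p)}$.

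Next I would split the time integration at the scale $|t-s|=h$. In the near-diagonal region $|t-s|\leq h$, the kernel is uniformly bounded by $h^{-(\alpha+\sigma)(1-2/p)}$, and Young's inequality applied to convolution with $\chi_{[-h,h]}$ produces an extra factor $h^{2/q}$; the resulting exponent of $h$ is exactly $-2(\alpha+\sigma)(1/2-1/p)+2/q$, matching the exponent in $\Lambda(h)^2$. In the far region $|t-s|>h$, the kernel is dominated by $h^{-\alpha(1-2/p)}|t-s|^{-\sigma(1-2/p)}$, and off the sharp endpoint the Hardy-Littlewood-Sobolev inequality in the time variable (or, equivalently, Young's inequality on a weak-$L^r$ kernel) closes the estimate with the same semiclassical power.

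The main obstacle is the sharp endpoint $q=2$, $p=2\sigma/(\sigma-1)$ (available only when $\sigma>1$), at which HLS in the time variable is just barely insufficient. I would treat it via Keel-Tao's atomic decomposition: expand $F$ and $G$ dyadically in the time variable into characteristic-function atoms of the Lorentz space $L^{2,1}_t$, and on each pair of time intervals interpolate bilinearly between the dispersive bound (sharp for well-separated intervals) and the energy bound $\|U(t)U(s)^*\|_{L^2\to L^2}\leq C^2$ (sharp for nearby intervals). The resulting geometric series in the two dyadic parameters converges precisely when $(q,p,\sigma)\neq(2,\infty,1)$, and the $h$-factors pass through identically; combining with the non-endpoint analysis covers the full admissible range, and the claimed $L^2\to L^q_tL^p_x$ estimate is then recovered from the bilinear form by standard duality.
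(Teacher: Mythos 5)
The paper does not actually supply a proof of this proposition; it is stated as a known semiclassical variant of the abstract Keel--Tao theorem and the reader is referred to \cite{KTZ,Zworski,ZZ19}. Your argument reconstructs the standard route to it and is, in outline, correct: the $TT^*$ reduction, the Riesz--Thorin interpolation of the energy and dispersive bounds to $\|U(t)U(s)^*\|_{L^{p'}\to L^p}\lesssim h^{-\alpha(1-2/p)}(h+|t-s|)^{-\sigma(1-2/p)}$, and the split of the time convolution at scale $|t-s|\sim h$ all check out. In the near-diagonal region Young's inequality with $\|\cdot\|_{L^{q/2}_t}$ produces $h^{2/q}$ and combines with the $h^{-(\alpha+\sigma)(1-2/p)}$ prefactor to give exactly $\Lambda(h)^2$; in the far region the excess decay $|\tau|^{-(\beta-2/q)}$ (where $\beta=\sigma(1-2/p)\geq 2/q$) is traded for $h^{-(\beta-2/q)}$ using $|\tau|>h$, which again yields $\Lambda(h)^2$ after HLS with exponent $2/q$. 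Your treatment of the $q=2$ endpoint via Keel--Tao's dyadic/bilinear-Lorentz machinery is the right idea, although as written it is only a sketch: the claim that ``the $h$-factors pass through identically'' is true but is asserted rather than checked.

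There is a cleaner way to close the whole argument, including the endpoint, which also makes the $h$-bookkeeping automatic: rescale time. Set $\tilde U(t):=U(ht)$. Then $\tilde U$ satisfies the classical Keel--Tao hypotheses with energy constant $C$ and dispersive bound
\begin{equation*}
\|\tilde U(t)\tilde U(s)^*\|_{L^1\to L^\infty}\leq C\,h^{-\alpha-\sigma}\,(1+|t-s|)^{-\sigma}\leq C\,h^{-\alpha-\sigma}\,|t-s|^{-\sigma}.
\end{equation*}
Applying the Keel--Tao theorem (with its explicit dependence on the energy constant to the power $1-\theta$ and the dispersive constant to the power $\theta/2$, $\theta=1-2/p$) to $\tilde U$ and then undoing the change of variables, which costs $\|\tilde U u_0\|_{L^q_tL^p}=h^{-1/q}\|Uu_0\|_{L^q_tL^p}$, produces exactly the factor $h^{1/q-(\alpha+\sigma)(1/2-1/p)}=\Lambda(h)$. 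This disposes of both the non-endpoint range and the endpoint $q=2$ in one stroke, and makes transparent why the single excluded triple is $(q,p,\sigma)=(2,\infty,1)$: it is precisely the exclusion in Keel--Tao. Your approach buys a self-contained derivation of the non-endpoint range from Young/HLS; the scaling reduction buys brevity and a verified endpoint without reproving the Keel--Tao bilinear interpolation from scratch.
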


\subsection{Homogeneous Strichartz estimates} By the Littlewood-Paley theory associated with the operator $\LL_{{\A},0}$ established in \cite[Proposition 3.1]{FZZ},  the homogeneous Strichartz estimates are
reduced to frequency localized estimates.

Let $U(t)=e^{it\sqrt{1+\LL_{{\A},0}}}$, then
we can write
\begin{equation}\label{sleq}
\begin{split}
u(t,x)
=\frac{U(t)+U(-t)}2 u_0+\frac{U(t)-U(-t)}{2i\sqrt{1+\LL_{{\A},0}}} u_1.
\end{split}
\end{equation}
Without loss of generality, we assume $u_1=0$.
 Notice that
\begin{equation*}
U(t)u_0=\sum_{j\in\Z}\sum_{k\in\Z}U_{k}(t)f_j=U^{\mathrm{low}}(t)u_0+\sum_{j\in\Z}\sum_{k\geq1}U_{k}(t)f_j,
\end{equation*}
where $f_j=\phi\big(2^{-j}\sqrt{\mathcal{L}_{\A,0}}\big)u_0$, $U^{\mathrm{low}}(t)$ is given in \eqref{equ:lowparkg} and $U_k(t)$ is in \eqref{U-k}.
By the Littlewood-Paley square inequality in \cite[Proposition 3.1]{FZZ} and Minkowski's inequality,  for
$q,p\geq2$, we have
\begin{equation}\label{LP}
\begin{split}
&\|U(t)u_0\|_{L^q(\R;L^p(\R^2))}\\ \lesssim& \|U^{\mathrm{low}}(t)u_0\|_{L^q(\R;L^p(\R^2))}+
\Big(\sum_{j\in\Z}\|\sum_{k\geq 1}U_{k}(t)f_j\|^2_{L^q(\R;L^p(\R^2))}\Big)^{\frac12}.
\end{split}
\end{equation}
On one hand, by using energy estimate,
Proposition \ref{prop:Dispersive-l} and the argument in Keel-Tao \cite{KT}, we have for $2/q\leq 2(1/2-1/p)$
\begin{equation}
\begin{split}
\|U^{\mathrm{low}} u_0\|_{L^q(\R;L^p(\R^2))}\leq C\|u_0\|_{L^2(\R^2)}.
\end{split}
\end{equation}
On the other hand, by using energy estimate and
Proposition \ref{prop:Dispersive-h}, we have the estimates \eqref{md}
for $U_{k}(t)$, where $\alpha=(3+\eta)/2$, $\sigma=(1+\eta)/2$ and
$h=2^{-k}$. Then it follows from Proposition \ref{prop:semi} that
\begin{equation*}
\|U_{k}(t)f_j\|_{L^q_t(\R:L^p(\R^2))}\lesssim
2^{k[(2+\eta)(\frac12-\frac1p)-\frac1q]} \|f_j\|_{L^2(\R^2)}.
\end{equation*}
 In
view of  $f_j=\phi(2^{-j}\sqrt{\LL_{{\A},0}})u_0$, then
$\phi(2^{-k}\sqrt{\LL_{{\A},0}})f_j$ vanishes if
$|j-k|\geq3$.
Then
\begin{equation}
\begin{split}
\sum_{j\in\Z}\|\sum_{k\geq 1}U_{k}(t)f_j\|^2_{L^q(\R;L^p(\R^2))}&\leq C \sum_{j\in\Z} \sum_{|k-j|\leq 3}\|U_{k}(t)f_j\|^2_{L^q(\R;L^p(\R^2))}\\
 &\leq C\sum_{j\geq0}2^{2js}\|f_j\|^2_{L^2(\R^2)}\leq C\|u_0\|^2_{ H^{s}_{{\A},0}(\R^2)}.
\end{split}
\end{equation}
Therefore we prove the Strichartz estimate with $u_1=F=0$
\begin{equation}
\|u\|_{L^q(\R;L^p(\R^2))}\leq C\|u_0\|_{ H^{s}_{{\A},0}(\R^2)}.
\end{equation}
holds for all $(q,p,s)$ satisfying \eqref{adm} and \eqref{scaling}.

\subsection{Inhomogeneous Strichartz estimates}\label{sec:inh} Let
$U(t)=e^{it\sqrt{1+\LL_{{\A},0}}}: L^2\rightarrow L^2$, then we have already
proved that
\begin{equation}
\|U(t)u_0\|_{L^q_tL^p_x}\lesssim\|u_0\|_{H^{s}_{{\A},0}(\R^2)}
\end{equation} holds for all $(q,p)\in \Lambda_{s,\eta}$ where $s\in\R$ and $0\leq\eta\leq 1$.
For $s\in\R$ and $(q,p)\in \Lambda_{s,\eta}$,
we define the operator $T_s$ by
\begin{equation}\label{Ts}
\begin{split}
T_s: L^2_x(\R^2)&\rightarrow L^q_tL^p_x(\R\times\R^2),\quad f\mapsto (1+\LL_{{\A},0})^{-\frac
s2}e^{it\sqrt{1+\LL_{{\A},0}}}f.
\end{split}
\end{equation}
Then we have by duality
\begin{equation}\label{Ts*}
\begin{split}
T^*_{1-s}: L^{\tilde{q}'}_tL^{\tilde{p}'}_x(\R\times\R^2)\rightarrow L^2(\R^2),\quad
F(\tau,x)&\mapsto \int_{\R}(1+\LL_{{\A},0})^{\frac
{s-1}2}e^{-i\tau\sqrt{1+\LL_{{\A},0}}}F(\tau)d\tau,
\end{split}
\end{equation}
where $1-s=2(\frac12-\frac1{\tilde{p}})-\frac1{\tilde{q}}$.
Therefore we obtain
\begin{equation*}
\Big\|\int_{\R}U(t)U^*(\tau)(1+\LL_{{\A},0})^{-\frac12}F(\tau)d\tau\Big\|_{L^q_tL^p_x(\R\times\R^2)}
=\big\|T_sT^*_{1-s}F\big\|_{L^q_tL^p_x(\R\times\R^2)}\lesssim\|F\|_{L^{\tilde{q}'}_tL^{\tilde{p}'}_x(\R\times\R^2)}.
\end{equation*}
Since $s=(2+\eta)(\frac12-\frac1p)-\frac1q$ and
$1-s=(2+\eta)(\frac12-\frac1{\tilde{p}})-\frac1{\tilde{q}}$, thus $(q,p),
(\tilde{q},\tilde{p})$ satisfy \eqref{scaling}. By the
Christ-Kiselev lemma \cite{CK}, we thus obtain for $q>\tilde{q}'$,
\begin{equation}\label{non-inhomgeneous}
\begin{split}
\Big\|\int_{\tau<t}\frac{\sin{(t-\tau)\sqrt{1+\LL_{{\A},0}}}}
{\sqrt{1+\LL_{{\A},0}}}F(\tau)d\tau\Big\|_{L^q_tL^p_x(\R\times\R^2)}\lesssim\|F\|_{L^{\tilde{q}'}_t{L}^{\tilde{p}'}_x(\R\times\R^2)}.
\end{split}
\end{equation}
Notice that for all $(q,p), (\tilde{q},\tilde{p})$ satisfy
\eqref{adm} and \eqref{scaling}, we must have $q>\tilde{q}'$.
Therefore we have proved all inhomogeneous Strichartz estimates
including $q=2$.

In sum, we have

\begin{theorem}[Global-in-time Strichartz estimate]\label{thm:Stri-0}   Let $a\equiv0$, the Strichartz estimates \eqref{stri} hold
for $(q,p), (\tilde{q},\tilde{p})\in \Lambda_{s,\eta}$
 with $0\leq\eta\leq1$ and $s\in\R$.

\end{theorem}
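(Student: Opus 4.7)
The plan is to reduce the estimate to frequency-localized pieces via Littlewood-Paley, then invoke the dispersive bounds already established in Propositions \ref{prop:Dispersive-l} and \ref{prop:Dispersive-h} inside the abstract Keel-Tao framework (Proposition \ref{prop:semi}). Writing $U(t):=e^{it\sqrt{1+\LL_{\A,0}}}$ and using the identity $1=\varphi_0(\sqrt{\LL_{\A,0}})+\sum_{k\geq 1}\phi(2^{-k}\sqrt{\LL_{\A,0}})$, I would split $U(t)u_0 = U^{\mathrm{low}}(t)u_0+\sum_{k\geq 1}U_k(t)u_0$ and treat low and high frequencies separately. The contribution of $u_1$ is handled identically after the trivial rewriting \eqref{sleq}.

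For the low-frequency block, I would combine the trivial $L^2\to L^2$ unitarity of $U^{\mathrm{low}}(t)$ with the Schr\"odinger-like decay $\|U^{\mathrm{low}}(t)(U^{\mathrm{low}})^*(s)\|_{L^1\to L^\infty}\lesssim(1+|t-s|)^{-1}$ and feed these into the standard Keel-Tao argument of \cite{KT} with $\sigma=1$, obtaining $\|U^{\mathrm{low}}u_0\|_{L^q_tL^p_x}\lesssim\|u_0\|_{L^2}$ for every $(q,p)$ with $2/q\le 2(1/2-1/p)$, including the endpoint $q=2$. For each dyadic piece $U_k(t)$, I would apply the semiclassical Strichartz of Proposition \ref{prop:semi} with parameters $h=2^{-k}$, $\alpha=(3+\eta)/2$, $\sigma=(1+\eta)/2$, so that $\Lambda(h)=2^{k[(2+\eta)(1/2-1/p)-1/q]}=2^{ks}$ is precisely the Sobolev weight demanded by the scaling relation \eqref{scaling}. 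The endpoint $(q,p,\sigma)=(2,\infty,1)$ is avoided here because $p<\infty$ on admissible pairs in $\Lambda_{s,\eta}$.

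To assemble the dyadic pieces I would use the square-function estimate associated with $\LL_{\A,0}$ from \cite[Prop.~3.1]{FZZ} combined with the almost-orthogonality $\phi(2^{-k}\sqrt{\LL_{\A,0}})\phi(2^{-j}\sqrt{\LL_{\A,0}})\equiv 0$ for $|k-j|\geq 3$ to collapse the double sum
\begin{equation*}
\sum_{j\in\Z}\Big\|\sum_{k\geq 1}U_k(t)f_j\Big\|^2_{L^q_tL^p_x}\lesssim \sum_{j\geq 0}2^{2js}\|f_j\|^2_{L^2}\lesssim\|u_0\|^2_{H^s_{\A,0}},
\end{equation*}
where $f_j=\phi(2^{-j}\sqrt{\LL_{\A,0}})u_0$. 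Combined with the low-frequency bound this closes the homogeneous estimate for every $(q,p)\in\Lambda_{s,\eta}$ and every $s\in\R$.

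For the inhomogeneous estimate I would pass through duality: set $T_s f=(1+\LL_{\A,0})^{-s/2}e^{it\sqrt{1+\LL_{\A,0}}}f$ as in \eqref{Ts}, apply the homogeneous bound just proved to both $T_s$ and, by duality, to $T_{1-s}^*$, and conclude that the non-retarded composition $T_sT_{1-s}^*$ maps $L^{\tilde q'}_tL^{\tilde p'}_x$ into $L^q_tL^p_x$ whenever $(q,p),(\tilde q,\tilde p)\in\Lambda_{s,\eta}$ and \eqref{scaling} holds for both. Finally I would invoke the Christ-Kiselev lemma \cite{CK} to replace the full integral by the retarded truncation $\tau<t$; this requires $q>\tilde q'$, which is automatic from the admissibility condition \eqref{adm} applied to each pair, so the double endpoint $q=\tilde q=2$ is also recovered. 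The main obstacle, and the place where care is needed, is the bookkeeping in the dyadic sum: one must verify that the semiclassical loss $\Lambda(2^{-k})$ matches \emph{exactly} the Sobolev weight dictated by \eqref{scaling} for each admissible $(q,p)$ and for both values $\eta=0$ (wave-type) and $\eta=1$ (Schr\"odinger-type), so that no gap between admissibility and scaling appears.
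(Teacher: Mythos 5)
Your proposal follows the paper's proof essentially step by step: the low/high frequency split, Keel--Tao for the low-frequency block, the semiclassical Strichartz estimate of Proposition \ref{prop:semi} with $h=2^{-k}$, $\alpha=(3+\eta)/2$, $\sigma=(1+\eta)/2$ for each dyadic piece, reassembly via the Littlewood--Paley square function from \cite[Prop.~3.1]{FZZ} together with the almost-orthogonality of the $\phi(2^{-k}\sqrt{\LL_{\A,0}})$, and finally the $T_sT_{1-s}^*$ duality argument plus Christ--Kiselev for the retarded integral. One small phrasing slip: Christ--Kiselev does \emph{not} recover the double endpoint $q=\tilde q=2$ (it needs strict $q>\tilde q'$); rather, as the paper observes, that case simply never arises because admissibility with $p,\tilde p<\infty$ forces $q,\tilde q>2$.
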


\section{The heat kernel estimate and Sobolev embedding}

To prove Theorem
\ref{thm:Stri}, we need to take account into the perturbation both from the magnetic potential $\A$ and electrical potential $a$.
In this section, as preliminary tools, we prove Proposition \ref{prop:heat} and then show the Sobolev embedding associated with the operator $\LL_{{\A},a}$.
\begin{proposition}[Sobolev inequality]\label{prop:sob} Let $\LL_{{\A},a}$ be as in Proposition \ref{prop:heat}. Then there exists a constant $C$ such that
\begin{equation}\label{est:sobolev}
\|f\|_{L^q(\R^2)}\leq C\|\LL_{{\A},a}^{\frac{\sigma}2}f\|_{L^p(\R^2)},
\end{equation}
where $0\leq \sigma=2(\frac1p-\frac1q)<2$ and $1<q,p<\infty$.
\end{proposition}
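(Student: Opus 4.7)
\bigskip

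\noindent\textbf{Proof proposal.} The plan is to reduce the inequality to the classical Hardy--Littlewood--Sobolev inequality by obtaining a pointwise Riesz-type bound for the Schwartz kernel of the negative power $\LL_{{\A},a}^{-\sigma/2}$. The case $\sigma=0$ is trivial, so assume $0<\sigma<2$.

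First, I would use the subordination formula
\begin{equation*}
\LL_{{\A},a}^{-\sigma/2}=\frac{1}{\Gamma(\sigma/2)}\int_0^\infty t^{\sigma/2-1}\,e^{-t\LL_{{\A},a}}\,dt,
\end{equation*}
which is valid on the image of $\LL_{{\A},a}^{\sigma/2}$ since $\LL_{{\A},a}$ is a non-negative self-adjoint operator (this follows from Friedrichs' extension under the assumption \eqref{equ:condassa}, and zero is not an eigenvalue). Writing out the kernel and inserting the Gaussian upper bound from Proposition \ref{prop:heat} yields
\begin{equation*}
\big|\LL_{{\A},a}^{-\sigma/2}(x,y)\big|\leq \frac{C}{\Gamma(\sigma/2)}\int_0^\infty t^{\sigma/2-2}\,e^{-|x-y|^2/(ct)}\,dt.
\end{equation*}

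Second, I would compute the $t$-integral via the change of variables $u=|x-y|^2/(ct)$, which produces
\begin{equation*}
\int_0^\infty t^{\sigma/2-2}e^{-|x-y|^2/(ct)}\,dt = c^{\sigma/2-1}\,\Gamma(1-\sigma/2)\,|x-y|^{\sigma-2}.
\end{equation*}
The Gamma factor is finite precisely because $\sigma<2$. Hence
\begin{equation*}
\big|\LL_{{\A},a}^{-\sigma/2}(x,y)\big|\leq C_\sigma\,|x-y|^{\sigma-2},
\end{equation*}
i.e.\ the kernel is dominated pointwise by the two-dimensional Riesz kernel $I_\sigma$.

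Third, setting $g:=\LL_{{\A},a}^{\sigma/2}f$ and applying the classical Hardy--Littlewood--Sobolev inequality on $\R^2$ (with exponents $1<p<q<\infty$ related by $\tfrac{1}{p}-\tfrac{1}{q}=\tfrac{\sigma}{2}$), I obtain
\begin{equation*}
\|f\|_{L^q(\R^2)} = \big\|\LL_{{\A},a}^{-\sigma/2}g\big\|_{L^q(\R^2)}\leq C\,\big\||\cdot|^{\sigma-2}\ast |g|\big\|_{L^q(\R^2)}\leq C\,\|g\|_{L^p(\R^2)},
\end{equation*}
which is \eqref{est:sobolev}. The relation $\sigma=2(\tfrac{1}{p}-\tfrac{1}{q})$ in the hypothesis matches the scaling condition required by HLS in dimension two.

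The main obstacle is genuinely upstream of this argument, namely securing the Gaussian heat-kernel bound of Proposition \ref{prop:heat}; once that is in hand, the Sobolev inequality follows by a standard subordination-plus-HLS routine. A minor technical point I would be careful about is justifying the subordination identity on a dense subspace (e.g.\ on $\LL_{{\A},a}^{\sigma/2}(C_c^\infty(\R^2\setminus\{0\}))$) using the spectral theorem, and then extending by density; this is routine but must be mentioned because the domain of $\LL_{{\A},a}$ excludes the origin due to the singular potentials.
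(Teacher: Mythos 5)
Your proof is correct and takes essentially the same approach as the paper: both reduce to the Riesz-potential bound via the subordination formula and the Gaussian heat-kernel estimate of Proposition \ref{prop:heat}, then conclude by the Hardy--Littlewood--Sobolev inequality. (A cosmetic slip: the constant from your change of variables should be $c^{1-\sigma/2}\Gamma(1-\sigma/2)$, not $c^{\sigma/2-1}\Gamma(1-\sigma/2)$, but this does not affect the argument.)
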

\vspace{0.2cm}

\begin{proof} To prove \eqref{est:sobolev}, it suffices to show
\begin{equation}\label{est:sobolev'}
\|\LL_{{\A},a}^{-\frac{\sigma}2} f\|_{L^q(\R^2)}\leq C\|f\|_{L^p(\R^2)},
\end{equation}
where
\begin{equation}
\LL_{{\A},a}^{-\frac{\sigma}2}:=\frac{1}{\Gamma(\sigma/2)}\int_0^\infty e^{-t\LL_{{\A},a}} t^{\frac \sigma2} \frac{dt}{t},\quad \sigma\geq0.
\end{equation}
By Proposition \ref{prop:heat}, we have
\begin{equation}\label{est:sobolev''}
\begin{split}
\|\LL_{{\A},a}^{-\frac{\sigma}2} f\|_{L^q(\R^2)}&\leq C_{\sigma}  \Big\|\int_{\R^2}\int_0^\infty e^{-\frac{|x-y|^2}{ct}} t^{\frac\sigma2-2} dt f(y) dy \Big\|_{L^q(\R^2)}\\
&\leq C_{\sigma} \Big\|\int_{\R^2} |x-y|^{\sigma-2} f(y) dy \Big\|_{L^q(\R^2)} \int_0^\infty e^{-\frac{1}{ct}} t^{\frac\sigma2-2} dt \\
&\leq C_{\sigma} \|f\|_{L^p(\R^2)},
\end{split}
\end{equation}
where we have used the Hardy-Littlewood-Sobolev inequality in the last inequality.
\end{proof}

Thus our main task is to prove Proposition \ref{prop:heat}. Our strategy is to combine the asymptotic property of the spectrum of $L_{{\A},a}$  proved in \cite{FFFP} and the construction of heat kernel in \cite{FZZ} when $a\equiv0$.
Recall
\begin{equation}\label{equ:laadefadd}
\begin{split}
L_{{\A},a}&=-\Delta_{\mathbb{S}^{1}}+\big(|{\A}(\hat{x})|^2+a(\hat{x})+i\,\mathrm{div}_{\mathbb{S}^{1}}{\A}(\hat{x})\big)+2i {\A}(\hat{x})\cdot\nabla_{\mathbb{S}^{1}}\\
&=-\partial_\theta^2+\big(|{A}(\theta)|^2+a(\theta)+i\,{A'}(\theta)\big)+2i {A}(\theta)\partial_\theta.
\end{split}
\end{equation}

From the classical spectral theory, the spectrum of $L_{{\A},a}$ is formed by a countable family of real eigenvalues with finite multiplicity $\{\mu_k({\A},a)\}_{k=1}^\infty$ enumerated  such that
\begin{equation}\label{eig-Aa}
\mu_1({\A},a)\leq \mu_2({\A},a)\leq \cdots
\end{equation}
where we repeat each eigenvalue as many times as its multiplicity, and $\lim\limits_{k\to\infty}\mu_k({\A},a)=+\infty$, see \cite[Lemma A.5]{FFT}.

For each $k\in\N, k\geq1$, let $\psi_k(\hat{x})\in L^2(\mathbb{S}^{1})$ be the normalized eigenfunction of the operator $L_{{\A},a}$ corresponding to the $k$-th eigenvalue $\mu_k({\A},a)$, i.e. satisfying that
\begin{equation}\label{equ:eig-Aa}
\begin{cases}
L_{{\A},a}\psi_k(\hat{x})=\mu_k({\A},a)\psi_k(\hat{x})\quad \text{on}\,\quad  \mathbb{S}^{1};\\
\int_{\mathbb{S}^{1}}|\psi_k(\hat{x})|^2 d\hat{x}=1.
\end{cases}\end{equation}
To prove Proposition \ref{prop:heat}, we consider two cases, i.e., non-resonant case $\Phi_{{\A}}\not \in\frac12\Z$ and resonant case $\Phi_{{\A}} \in\frac12\Z$.\vspace{0.2cm}

{\bf Case I: non-resonant case $\Phi_{{\A}}\not \in\frac12\Z$. }
We recall \cite[Lemma 2.1]{FFFP} about the asymptotic property about the spectrum.
To consist with the notation of \cite{FFFP}, we set $\tilde{A}=\Phi_{{\A}}$ in this subsection.
\begin{lemma}\label{lem:asy-eig}  Let $a(\theta)\in W^{1,\infty}(\mathbb{S}^1)$ and $A(\theta)\in W^{1,\infty}(\mathbb{S}^1)$ and set
\begin{equation}
 \tilde{A}=\frac1{2\pi}\int_0^{2\pi} A(\theta') d\theta'\notin \frac12\Z,\qquad \tilde{a}=\frac1{2\pi}\int_0^{2\pi} a(\theta') d\theta'.
\end{equation}
Let $\mu_k=\mu_k({\A},a)$ be given in \eqref{eig-Aa}. Then there exist $K, J\in\N$ large enough such that $\{\mu_k, k\geq K\}=\{\nu_j, j\in\Z, |j|\geq J\}$ such that
\begin{equation}
\sqrt{\nu_j-\tilde{a}}=(\mathrm{sign} j)\big(\tilde{A}-\big\lfloor\tilde{A}+\frac12\big\rfloor\big)+|j|+O(|j|^{-3}),\quad |j| \to\infty,
\end{equation}
and
\begin{equation}\label{njA}
\nu_j=\tilde{a}+\big(j+\tilde{A}-\big\lfloor\tilde{A}+\frac12\big\rfloor\big)^2+O(|j|^{-2}),\quad |j| \to\infty.
\end{equation}
Moreover, for all $j\in\Z$ such that $|j|\geq J$, there exists $\phi_j(\hat{x})\in L^2(\mathbb{S}^{1})$ be the normalized eigenfunction of the operator $L_{{\A},a}$ corresponding to the $j$-th eigenvalue $\nu_j({\A},a)$, i.e. satisfying that
\begin{equation}\label{eig-Aa'}
\begin{cases}
L_{{\A},a}\phi_j(\hat{x})=\nu_j({\A},a)\phi_j(\hat{x})\quad \text{on}\,\quad  \mathbb{S}^{1};\\
\int_{\mathbb{S}^{1}}|\phi_j(\hat{x})|^2 d\hat{x}=1.
\end{cases}\end{equation}
In addition, let $\hat{x}=(\cos\theta,\sin\theta)$ with $\theta\in [0,2\pi]$, then
\begin{equation}\label{eigen'}
\begin{split}
\phi_j(\hat{x})=\phi_j(\theta)=\frac1{\sqrt{2\pi}}e^{-i\big(\lfloor\tilde{A}+1/2\rfloor\theta+\int_0^\theta A(\theta')d\theta'\big)}\Big(e^{i(\tilde{A}+j)\theta}+R_j(\theta)\Big),
\end{split}\end{equation}
where $\|R_j(\theta)\|_{L^\infty(\mathbb{S}^1)}=O(|j|^{-3})$ as $|j|\to+\infty$. The notation $\lfloor\cdot\rfloor$ denotes the floor function $\lfloor x\rfloor=\max\{k\in\Z: k\leq x\}$.
\end{lemma}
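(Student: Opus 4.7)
My starting point would be to bring the angular operator to an explicitly solvable constant-flux form. Using the transversality condition and \eqref{equ:laadefadd}, I write $L_{{\A},a}=(i\partial_\theta+A(\theta))^2+a(\theta)$ on $L^2(\mathbb{S}^1)$ and apply the unitary gauge transformation $Uf=e^{i\tilde h(\theta)}f$ with $\tilde h(\theta)=\int_0^\theta(A(\theta')-\tilde A)\,d\theta'$; the function $\tilde h$ is $2\pi$-periodic because $\tilde h(2\pi)=0$ by definition of $\tilde A$. A direct computation gives $(i\partial_\theta+A)U=U(i\partial_\theta+\tilde A)$, hence $U^{-1}L_{{\A},a}\,U=(i\partial_\theta+\tilde A)^2+a(\theta)$. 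The further substitution $\phi=e^{i\tilde A\theta}\chi$ reduces the eigenvalue problem to the scalar one-dimensional Schr\"odinger problem
\begin{equation*}
-\chi''+a(\theta)\chi=\mu\chi,\qquad \chi(\theta+2\pi)=e^{-2\pi i\tilde A}\chi(\theta),
\end{equation*}
i.e.\ a Floquet problem with quasi-momentum $-\tilde A$. The hypothesis $\tilde A\notin\tfrac12\Z$ ensures that the unperturbed ($a\equiv 0$) eigenvalues $(n-\tilde A)^2$ are simple, with normalized eigenfunctions $\chi_n^{(0)}(\theta)=e^{i(n-\tilde A)\theta}/\sqrt{2\pi}$ and consecutive spectral gaps of size $\gtrsim|n|$.

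With this reduction, the eigenvalue asymptotic follows from Kato-Rellich perturbation theory: since $a\in W^{1,\infty}$ is a bounded self-adjoint perturbation and the unperturbed spectral gaps grow linearly, for $|n|$ large the perturbed eigenvalue is simple and the Rayleigh-Schr\"odinger series
\begin{equation*}
\mu_n=(n-\tilde A)^2+\langle\chi_n^{(0)},a\chi_n^{(0)}\rangle+\sum_{n'\neq n}\frac{|\langle\chi_{n'}^{(0)},a\chi_n^{(0)}\rangle|^2}{(n-\tilde A)^2-(n'-\tilde A)^2}+O(|n|^{-3})
\end{equation*}
converges. Since $|\chi_n^{(0)}|^2$ is constant, the first-order correction equals $\tilde a$; writing the second-order numerators as $|\hat a(\ell)|^2$ with $\ell=n'-n$ and the denominators as $-\ell(2n+\ell-2\tilde A)$, the odd symmetry $|\hat a(\ell)|^2=|\hat a(-\ell)|^2$ combined with the expansion of $1/(2n+\ell-2\tilde A)$ at $1/(2n-2\tilde A)$ cancels the leading $O(|n|^{-1})$ term and leaves $O(|n|^{-2})$. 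Relabelling $j=n-\lfloor\tilde A+\tfrac12\rfloor$ (so that $j+\tilde A-\lfloor\tilde A+\tfrac12\rfloor\in(-\tfrac12,\tfrac12)$ is the fundamental-band representative of $n-\tilde A$) and handling finitely many low eigenvalues by Weyl's law produce the bijection $\{\mu_k\}_{k\ge K}\leftrightarrow\{\nu_j\}_{|j|\ge J}$ and the asymptotic \eqref{njA}; since $\nu_j-\tilde a=m_j^2+O(|j|^{-2})$ with $|m_j|\sim|j|$, taking square roots is immediate.

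For the eigenfunction expansion \eqref{eigen'} I would write $\chi_n=\chi_n^{(0)}+\widetilde R_n$ via the Kato projection formula and undo the substitutions by computing $\phi_j=U\,e^{i\tilde A\theta}\chi_n$ with $n=j+\lfloor\tilde A+\tfrac12\rfloor$. The three unimodular pre-factors $e^{i\tilde h}$, $e^{i\tilde A\theta}$ and the relabelling of the plane wave combine into the exponent appearing in \eqref{eigen'}, and the remainder $R_j$ inherits the bound on $\widetilde R_n$. The hard part is obtaining the sharp $O(|j|^{-3})$ rate claimed for $R_j$: the naive first-order resolvent correction only yields $O(|j|^{-1})$ in $L^\infty$, because the dominant Fourier modes $\hat a(\pm 1)$ contribute terms of size $\hat a(\pm 1)/(2n\pm 1-2\tilde A)$. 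To reach the stated rate one has to iterate the perturbation series to higher order and absorb the leading phase corrections into a WKB-type factor, exploiting the $W^{1,\infty}$ decay $|\hat a(\ell)|\lesssim|\ell|^{-1}$; equivalently, one may run the Liouville-Green expansion of the Floquet equation above, inserting the first two phase corrections $\tfrac{1}{2m}\int_0^\theta a$ and $\tfrac{1}{8m^3}\int_0^\theta a^2$ into the exponent and leaving only an $O(m^{-3})$ amplitude remainder. This higher-order WKB bookkeeping, carried out in detail in \cite{FFFP}, is the delicate technical step of the argument.
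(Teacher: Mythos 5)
The paper does not prove Lemma \ref{lem:asy-eig} at all: immediately above it the authors write ``We recall [Lemma 2.1]\{FFFP\} about the asymptotic property about the spectrum,'' so the lemma is simply imported from Fanelli--Felli--Fontelos--Primo and there is no in-paper argument to compare your sketch against. Your blind reconstruction is a reasonable outline of the route FFFP take: the gauge transformation reducing $L_{{\A},a}$ to the constant-flux form $(i\partial_\theta+\tilde A)^2+a$, the further substitution turning the problem into a Floquet problem with quasi-momentum $-\tilde A$, the simplicity of the unperturbed band $(n-\tilde A)^2$ when $\tilde A\notin\tfrac12\Z$, and the Rayleigh--Schr\"odinger expansion are all correctly set up. The cancellation you extract from the odd symmetry of $|\hat a(\ell)|^2/\ell$, together with $|\hat a(\ell)|\lesssim|\ell|^{-1}$ from $a\in W^{1,\infty}$, does give the stated $O(|j|^{-2})$ error in \eqref{njA}, and taking the square root then gives the first asymptotic.

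The genuine gap is exactly the one you flag yourself: the $O(|j|^{-3})$ bound on the eigenfunction remainder is not proved, only attributed to a ``higher-order WKB bookkeeping'' in [FFFP]. In fact your computation that the first-order resolvent correction is of size $O(|j|^{-1})$ (dominated, via the partial-fraction split of $1/(\ell(2n-2\tilde A+\ell))$, by a $\theta$-dependent and $j$-dependent phase $\sim\tfrac{1}{2n-2\tilde A}\int_0^\theta(a-\tilde a)$) points to a real tension with \eqref{eigen'} as written: that phase correction is $j$-dependent, so it cannot be absorbed into the fixed prefactor $e^{-i(\lfloor\tilde A+1/2\rfloor\theta+\int_0^\theta A(\theta')\,d\theta')}$, and if it were instead pushed into $R_j$ then $\|R_j\|_{L^\infty}$ would only be $O(|j|^{-1})$. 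Either FFFP's Lemma 2.1 packages the $j$-dependent phase explicitly (and the form here is a loose transcription), or the $O(|j|^{-3})$ rate requires a cancellation beyond what you describe. Since the present paper takes the lemma as a black box, this is a question about the cited result rather than a defect of your argument, but it is worth resolving before relying on the $O(|j|^{-3})$ rate; at the very least the sketch as it stands does not establish it.
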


Now we prove the heat kernel estimate in Proposition \ref{prop:heat} .

\begin{proof}[{\bf The proof of Proposition \ref{prop:heat}}]

By using the functional calculus (see \cite{Taylor}), we write the heat kernel
\begin{equation}\label{heat-ke}
e^{-t\LL_{{\A},a}}(x,y)=K(t; r_1,\theta_1,r_2,\theta_2)=\sum_{k=1}^\infty \psi_{k}(\theta_1) \overline{\psi_{k}(\theta_2)}K_{\sqrt{\mu_k}}(t; r_1,r_2).
\end{equation}
where
\begin{equation}\label{equ:knukdef1new}
  K_{\sqrt{\mu_k}}(t; r_1,r_2)=\int_0^\infty e^{-t\rho^2} J_{\sqrt{\mu_k}}(r_1\rho)J_{\sqrt{\mu_k}}(r_2\rho) \,\rho d\rho=t^{-1}e^{-\frac{r_1^2+r_2^2}{4t}} I_{\sqrt{\mu_k}}\big(\frac{r_1r_2}{2t}\big).
\end{equation}
and $J_{\sqrt{\mu_k}}$ is the Bessel function of order $\sqrt{\mu_k}$ and $I_{\sqrt{\mu_k}}$ is the modified Bessel function of the first kind.
 Recall
\begin{equation}
I_{\sqrt{\mu_k}}(z)=\frac1{\pi}\int_0^\pi e^{z\cos(s)} \cos(\sqrt{\mu_k} s) ds-\frac{\sin(\sqrt{\mu_k}\pi)}{\pi}\int_0^\infty e^{-z\cosh s} e^{-s\sqrt{\mu_k}} ds,
\end{equation}
let $z=\frac{r_1r_2}{2t}$, then we write the heat kernel as
\begin{align}\label{heat-ke'}
e^{-t\LL_{{\A},a}}(x,y)=&t^{-1}e^{-\frac{r_1^2+r_2^2}{4t}}\sum_{k=1}^\infty \psi_{k}(\theta_1) \overline{\psi_{k}(\theta_2)} I_{\sqrt{\mu_k}}\big(z\big)\\\nonumber
=&(\pi t)^{-1}e^{-\frac{r_1^2+r_2^2}{4t}}\sum_{k=1}^\infty \psi_{k}(\theta_1) \overline{\psi_{k}(\theta_2)} \int_0^\pi e^{z\cos(s)} \cos(\sqrt{\mu_k} s) ds\\\nonumber
&-(\pi t)^{-1}e^{-\frac{r_1^2+r_2^2}{4t}}\sum_{k=1}^\infty \psi_{k}(\theta_1) \overline{\psi_{k}(\theta_2)}\sin(\sqrt{\mu_k}\pi)\int_0^\infty e^{-z\cosh s} e^{-s\sqrt{\mu_k}} ds\\\nonumber
:=&G(t;x,y)+D(t;x,y).
\end{align}

{\bf Step 1:} We first consider the term $G(t;x,y)$. Since
\begin{equation}\label{equL:heat-ke'}
\begin{split}
\cos(s\sqrt{L_{{\A},a}} )=\sum_{k=1}^\infty \psi_{k}(\theta_1) \overline{\psi_{k}(\theta_2)}  \cos(\sqrt{\mu_k} s)
\end{split}
\end{equation}
where $L_{{\A},a}$ is the operator on $\mathbb{S}^1$ and $a(\theta)\in L^1_{\text{loc}}$. By \cite[Theorem 3.3]{CS}, the propagator $\cos(s\sqrt{L_{{\A},a}})$ has finite
propagation speed, thus $\cos(s\sqrt{L_{{\A},a}})$ vanishes $s<|\theta_1-\theta_2|$.
Then
\begin{equation}
\begin{split}
G(t;x,y)=(\pi t)^{-1}e^{-\frac{r_1^2+r_2^2}{4t}}\sum_{k=1}^\infty \psi_{k}(\theta_1) \overline{\psi_{k}(\theta_2)} \int_{|\theta_1-\theta_2|}^\pi e^{z\cos(s)} \cos(\sqrt{\mu_k} s) ds.
\end{split}
\end{equation}
For $K, J$ in Lemma \ref{lem:asy-eig}, we have
$$\{\mu_k, k\geq K\}=\{\nu_j, j\in\Z, |j|\geq J\}.$$
Hence, similarly as \cite[Corollary 2.12]{FFT}, for any $ k\geq K$, there exists $|j|\geq J$ such that $\mu_k=\nu_j$ and $\psi_{k}(\theta)=\phi_j(\theta)$.
We split $G(t;x,y)$ into two terms
$$G(t;x,y)=G_1(t;x,y)+G_2(t;x,y)$$
where
\begin{equation}\label{G1}
G_1(t;x,y)=(\pi t)^{-1}e^{-\frac{r_1^2+r_2^2}{4t}}\sum_{k=1}^{K} \psi_{k}(\theta_1) \overline{\psi_{k}(\theta_2)} \int_{|\theta_1-\theta_2|}^\pi e^{z\cos(s)} \cos(\sqrt{\mu_k} s) ds,
\end{equation}
and
\begin{align}\label{G2}
&G_2(t;x,y)=(\pi t)^{-1}e^{-\frac{r_1^2+r_2^2}{4t}}e^{-i\big(\lfloor\tilde{A}+1/2\rfloor(\theta_1-\theta_2)+\int_{\theta_1}^{\theta_2} A(\theta')d\theta'\big)} \\\nonumber
&\times \sum_{\{ j\in\Z, |j|\geq J\}}  \Big(e^{i(\tilde{A}+j)\theta_1}+R_j(\theta_1)\Big)
\Big(e^{-i(\tilde{A}+j)\theta_2}+\overline{R_j(\theta_2)}\Big) \int_{|\theta_1-\theta_2|}^\pi e^{z\cos(s)} \cos(\sqrt{\nu_j} s) ds.
\end{align}
We estimate $G_1(t;,x,y)$ by
\begin{equation}\label{G1'}
|G_1(t;x,y)|\leq C_K t^{-1}e^{-\frac{r_1^2+r_2^2}{4t}}e^{z\cos(\theta_1-\theta_2)} \leq C_Kt^{-1} e^{-\frac{|x-y|}{4t}},
\end{equation}
where we use the fact $z=\frac{r_1r_2}{2t}$ and $|\psi_k|\leq C$. For $G_2(t;,x,y)$, since $|R_j|\leq C_J |j|^{-3}$, then
\begin{align}\nonumber
|G_2(t;x,y)|\leq& C_J t^{-1}e^{-\frac{r_1^2+r_2^2}{4t}}e^{z\cos(\theta_1-\theta_2)}\sum_{|j|\geq J} |j|^{-3}\\\nonumber
&+C t^{-1}e^{-\frac{r_1^2+r_2^2}{4t}}
\Big| \sum_{\{ j\in\Z, |j|\geq J\}}  e^{i(\tilde{A}+j)(\theta_1-\theta_2)}\int_{|\theta_1-\theta_2|}^\pi e^{z\cos(s)} \cos(\sqrt{\nu_j} s) ds\Big|\\\label{G2'}
\leq& C_Jt^{-1} e^{-\frac{|x-y|}{4t}},
\end{align}
if we could prove that
\begin{lemma}\label{lem:G2-key} There holds
\begin{equation}\label{G2-key}
\begin{split}
\Big| \sum_{\{ j\in\Z, |j|\geq J\}}  e^{ij(\theta_1-\theta_2)}\int_{|\theta_1-\theta_2|}^\pi e^{z\cos(s)} \cos(\sqrt{\nu_j} s) ds\Big|
\leq C_J e^{z\cos(\theta_1-\theta_2)}.
\end{split}
\end{equation}
\end{lemma}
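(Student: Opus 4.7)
The plan is to compare the given sum to its $a\equiv 0$ analogue, for which an explicit closed form is already available from \eqref{equ:term1sch}, and then control the perturbation by Taylor expansion combined with integration by parts. Write $\beta=\tilde{A}-\lfloor\tilde{A}+\tfrac12\rfloor\in(-\tfrac12,\tfrac12)\setminus\{0\}$ and recall from Lemma \ref{lem:asy-eig} that $\sqrt{\nu_j}=|j+\beta|+\delta_j$ with $\delta_j=\tilde{a}/(2|j+\beta|)+O(|j|^{-3})$ as $|j|\to\infty$. Setting $\psi=\theta_1-\theta_2$ and assuming $|\psi|\in(0,\pi)$, the basic bound used throughout is $\int_{|\psi|}^{\pi} e^{z\cos s}\,ds\le \pi e^{z\cos\psi}$, since $e^{z\cos s}$ is decreasing on $[|\psi|,\pi]$.

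First I would split, for each $|j|\ge J$,
\begin{equation*}
\int_{|\psi|}^{\pi} e^{z\cos s}\cos(\sqrt{\nu_j}\,s)\,ds=M_j+E_j,
\end{equation*}
with $M_j=\int_{|\psi|}^{\pi} e^{z\cos s}\cos(|j+\beta|s)\,ds$ and $E_j$ the corresponding difference. For the main piece, I would complete the index range to all $j\in\Z$, paying the finite price $\sum_{|j|<J}|e^{ij\psi}M_j|\le 2J\pi e^{z\cos\psi}$. Writing $\int_{|\psi|}^\pi=\int_0^\pi-\int_0^{|\psi|}$, the first half of the completed sum is evaluated directly by \eqref{equ:term1sch} (with $\alpha=-\beta$ after relabeling $j\mapsto-j$), giving a bound $\pi e^{z\cos\psi}$. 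The second half vanishes because $\sum_{j\in\Z} e^{ij\psi}\cos(|j+\beta|s)$ coincides, up to a fixed smooth phase factor, with the Schwartz kernel of $\cos(s\sqrt{L_{\mathbf{A},0}})$ evaluated at $(\theta_1,\theta_2)$, which vanishes for $s<|\psi|$ by finite propagation speed on $\mathbb{S}^1$. Hence $\bigl|\sum_{|j|\ge J}e^{ij\psi}M_j\bigr|\le C_J e^{z\cos\psi}$.

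For the error $E_j$, I would Taylor expand $\cos(\sqrt{\nu_j}s)=\cos(|j+\beta|s)-\delta_j s\sin(|j+\beta|s)+O(\delta_j^2 s^2)$. The quadratic remainder contributes at most $C|j|^{-2}e^{z\cos\psi}$ per index and is absolutely summable. The linear piece $-\delta_j\int_{|\psi|}^\pi e^{z\cos s}s\sin(|j+\beta|s)\,ds$ has only a $|j|^{-1}$ prefactor, so I would integrate by parts once in $s$, using $\sin(|j+\beta|s)\,ds=-d(\cos(|j+\beta|s))/|j+\beta|$ to gain an additional $|j+\beta|^{-1}$. The boundary contributions and the resulting interior integral $|j|^{-2}\int_{|\psi|}^\pi e^{z\cos s}(1-sz\sin s)\cos(|j+\beta|s)\,ds$ are both bounded by $Ce^{z\cos\psi}/|j|^2$ after invoking the elementary identity $z\int_{|\psi|}^\pi e^{z\cos s}\sin s\,ds=e^{z\cos\psi}-e^{-z}\le e^{z\cos\psi}$. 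Summing over $|j|\ge J$ yields $\bigl|\sum_{|j|\ge J}e^{ij\psi}E_j\bigr|\lesssim C_Je^{z\cos\psi}$, completing the estimate.

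The main technical obstacle is precisely the first-order Taylor correction: the naive pointwise bound $|\cos(\sqrt{\nu_j}s)-\cos(|j+\beta|s)|\le C/|j|$ produces a logarithmic divergence upon summation, so extracting the extra $|j+\beta|^{-1}$ by integrating by parts in $s$ is unavoidable. A parallel delicacy is that every step must preserve the sharp exponential weight $e^{z\cos\psi}$ uniformly in the potentially large parameter $z=r_1r_2/(2t)$; this is why the refined identity $z\int e^{z\cos s}\sin s\,ds\le e^{z\cos\psi}$, rather than a crude bound by $e^z$, is essential.
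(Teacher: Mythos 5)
Your proposal is correct and the overall skeleton matches the paper's: split off the $a\equiv 0$ model sum $M_j$ (with angular frequency $|j+\bar{A}|$, $\bar{A}=\tilde{A}-\lfloor\tilde{A}+\tfrac12\rfloor$), control it by evaluating the full $j\in\Z$ sum in closed form, and then Taylor expand $\cos(\sqrt{\nu_j}\,s)$ around $\cos(|j+\bar{A}|s)$ to treat the perturbation. The difference lies in how you handle the first-order Taylor correction, whose coefficient decays only like $|j|^{-1}$ and hence is not absolutely summable. The paper resolves this by observing that the resulting series $\sum_{j\neq 0}\frac{1}{j}e^{ij(\theta_1-\theta_2\pm s)}$ is the Fourier expansion of a bounded sawtooth function, so it converges (conditionally) to something uniformly bounded; this is a slick resummation but leaves the uniform interchange of $\sum_j$ and $\int ds$ implicit. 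Your route is different: you integrate by parts once in $s$, turning $\sin(|j+\bar{A}|s)$ into $-\frac{d}{ds}\cos(|j+\bar{A}|s)/|j+\bar{A}|$, thereby gaining an extra $|j+\bar{A}|^{-1}$ and reducing the error to an absolutely convergent $\sum|j|^{-2}$ series; the key refined identity $z\int_{|\psi|}^{\pi}e^{z\cos s}\sin s\,ds=e^{z\cos\psi}-e^{-z}$ is exactly what keeps the sharp weight $e^{z\cos\psi}$ rather than the useless $e^{z}$. This buys you absolute convergence and a cleaner justification of the sum-integral interchange at the modest cost of one extra integration by parts, whereas the paper's sawtooth argument is shorter on the page but needs Abel/Dirichlet summation to be made fully rigorous. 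For the main piece you use finite propagation speed plus the explicit formula \eqref{equ:term1sch} to kill the $\int_0^{|\psi|}$ contribution, while the paper runs Poisson summation and reads off Dirac masses supported at $s=\pm(\theta_1-\theta_2)+2\pi j$; these are essentially the same fact stated differently, and both yield the required $C_J\,e^{z\cos(\theta_1-\theta_2)}$ bound.
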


\begin{proof}
Let $j_A=j+\bar{A}$ with $\bar{A}=\tilde{A}-\big\lfloor\tilde{A}+\frac12\big\rfloor$, we write
  \begin{equation}
\begin{split}
&\sum_{\{ j\in\Z, |j|\geq J\}} \cos(\sqrt{\nu_j} s)  e^{ij(\theta_1-\theta_2)}\\
=&\sum_{\{ j\in\Z, |j|\geq J\}} \big(\cos(\sqrt{\nu_j} s)-\cos(|j_A|s)\big)  e^{ij(\theta_1-\theta_2)}\\
&-\sum_{\{ j\in\Z, |j|\leq J\}} \cos(|j_A|s) e^{ij(\theta_1-\theta_2)}+\sum_{ j\in\Z} \cos(|j_A|s) e^{ij(\theta_1-\theta_2)}.
  \end{split}
  \end{equation}

We first prove
\begin{equation}\label{G2-key1}
\begin{split}
&\Big| \int_{|\theta_1-\theta_2|}^\pi e^{z\cos(s)} \sum_{\{ j\in\Z, |j|\leq J\}} \cos(|j_A|s) e^{ij(\theta_1-\theta_2)} ds\Big|\\
\leq& C_J \int_{|\theta_1-\theta_2|}^\pi e^{z\cos(s)} ds
\leq C_J e^{z\cos(\theta_1-\theta_2)}.
\end{split}
\end{equation}
Next, similarly as \cite{FZZ}, we consider
\begin{equation}\label{G2-key2}
\begin{split}
\int_{|\theta_1-\theta_2|}^\pi e^{z\cos(s)} \sum_{ j\in\Z} \cos(|j_A|s) e^{ij(\theta_1-\theta_2)} ds.
\end{split}
\end{equation}
By the Poisson summation formula, we write
\begin{equation*}
\begin{split}
\sum_{ j\in\Z} \cos(|j_A|s) e^{ij(\theta_1-\theta_2)}&=\sum_{j\in\Z} \frac{e^{i(j+\bar{A})s}+e^{-i(j+\bar{A})s}}2 e^{ij(\theta_1-\theta_2)}\\
&=\frac12\sum_{j\in\Z}\big(e^{i\bar{A} s}\delta(\theta_1-\theta_2+s+2\pi j)+e^{-i\bar{A} s}\delta(\theta_1-\theta_2-s+2\pi j)\big).
\end{split}
\end{equation*}
Therefore we have
\begin{equation}\label{equL:G2-key2}
\begin{split}
&\Big| \int_{|\theta_1-\theta_2|}^\pi e^{z\cos(s)} \sum_{ j\in\Z} \cos(|j_A|s) e^{ij(\theta_1-\theta_2)} ds\Big|\\
\leq& C_J \sum_{j\in\Z}\int_{|\theta_1-\theta_2|}^\pi e^{z\cos(s)} \delta(\theta_1-\theta_2\pm s+2\pi j)ds
\leq C_J e^{z\cos(\theta_1-\theta_2)},
\end{split}
\end{equation}
where we have used the fact that $0\leq s=\pm (\theta_1-\theta_2+2\pi j)\leq 2\pi$ implies the summation in $j$ is finite.\vspace{0.2cm}

Finally, by Taylor expanding $\cos\alpha-\cos\beta=-\sin\beta (\alpha-\beta)+O((|\alpha-\beta|)^2)$, we aim to estimate
\begin{equation}\label{G2-key3}
\begin{split}
&\Big| \int_{|\theta_1-\theta_2|}^\pi e^{z\cos(s)} \sum_{\{ j\in\Z, |j|\geq J\}} \big(\cos(\sqrt{\nu_j} s)-\cos(|j_A|s)\big)  e^{ij(\theta_1-\theta_2)} ds\Big|\\
\leq& \Big| \int_{|\theta_1-\theta_2|}^\pi e^{z\cos(s)} \sum_{\{ j\in\Z, |j|\geq J\}} \sin(|j_A|s) (\sqrt{\nu_j}-|j_A|)s  e^{ij(\theta_1-\theta_2)} ds\Big|\\
&+\sum_{\{ j\in\Z, |j|\geq J\}} (\sqrt{\nu_j}-|j_A|)^2 \Big| \int_{|\theta_1-\theta_2|}^\pi e^{z\cos(s)}  s^2 ds\Big|.
\end{split}
\end{equation}
From \eqref{njA}, notice that
$$\sqrt{\nu_j}-|j_A|=\frac{\nu_j-j_A^2}{\sqrt{\nu_j}+j_A}=\frac{\tilde{a}}{|j|}+O(|j|^{-2}),$$
we therefore obtain
\begin{equation}
\begin{split}
&\sum_{\{ j\in\Z, |j|\geq J\}} (\sqrt{\nu_j}-|j_A|)^2 \Big| \int_{|\theta_1-\theta_2|}^\pi e^{z\cos(s)}  s^2 ds\Big|\\&\leq C \sum_{\{ j\in\Z, |j|\geq J\}} |j|^{-2} e^{z\cos(\theta_1-\theta_2)}
\leq C_J e^{z\cos(\theta_1-\theta_2)},
\end{split}
\end{equation}
and
\begin{equation}\label{G2-key3'}
\begin{split}
& \Big| \int_{|\theta_1-\theta_2|}^\pi e^{z\cos(s)} \sum_{\{ j\in\Z, |j|\geq J\}} \sin(|j_A|s) (\sqrt{\nu_j}-|j_A|)s  e^{ij(\theta_1-\theta_2)} ds\Big|\\
\leq&  \Big| \int_{|\theta_1-\theta_2|}^\pi e^{z\cos(s)} \sum_{\{ j\in\Z, |j|\geq J\}} \sin(|j_A|s) \frac{s}{|j|}  e^{ij(\theta_1-\theta_2)} ds\Big|+C_J e^{z\cos(\theta_1-\theta_2)}\\
\leq&  \Big| \int_{|\theta_1-\theta_2|}^\pi e^{z\cos(s)} \sum_{\{ j\in\Z, |j|\geq 1\}} \frac{e^{is|j_A|}-e^{-is|j_A|}}{2i} \frac{s}{|j|}  e^{ij(\theta_1-\theta_2)} ds\Big|+C_J e^{z\cos(\theta_1-\theta_2)}.
\end{split}
\end{equation}
Recall $j_A=j+\bar{A}$ where $\bar{A}=\tilde{A}-\big\lfloor\tilde{A}+\frac12\big\rfloor$, we note that
\begin{equation}
\begin{split}
 &\sum_{\{ j\in\Z, |j|\geq 1\}} e^{is|j_A|}\frac{1}{|j|}  e^{ij(\theta_1-\theta_2)} \\
 =& \sum_{\{ j\in\Z, j\geq 1\}} \frac{1}{j}  e^{i[j(\theta_1-\theta_2)+(j+\bar{A})s]} - \sum_{\{ j\in\Z, j\leq -1\}} \frac{1}{j}  e^{i[j(\theta_1-\theta_2)-(j+\bar{A})s]} \\
\end{split}
\end{equation}
and
\begin{equation}
\begin{split}
 &\sum_{\{ j\in\Z, |j|\geq 1\}} e^{-is|j_A|}\frac{1}{|j|}  e^{ij(\theta_1-\theta_2)} \\
 =& \sum_{\{ j\in\Z, j\geq 1\}} \frac{1}{j}  e^{i[j(\theta_1-\theta_2)-(j+\bar{A})s]} - \sum_{\{ j\in\Z, j\leq -1\}} \frac{1}{j}  e^{i[j(\theta_1-\theta_2)+(j+\bar{A})s]}.
\end{split}
\end{equation}
Therefore we obtain
\begin{equation}
\begin{split}
 &\sum_{\{ j\in\Z, |j|\geq 1\}} \big(e^{is|j_A|}-e^{-is|j_A|}\big)\frac{1}{|j|}  e^{ij(\theta_1-\theta_2)} \\
 =& \sum_{\{ j\in\Z\setminus\{0\}\}} \frac{1}{j}  e^{i[j(\theta_1-\theta_2)+(j+\bar{A})s]} - \sum_{\{ j\in\Z\setminus\{0\}\}} \frac{1}{j}  e^{i[j(\theta_1-\theta_2)-(j+\bar{A})s]}\\
 =& e^{i\bar{A}s} \sum_{\{ j\in\Z\setminus\{0\}\}} \frac{1}{j}  e^{ij(\theta_1-\theta_2+s)} - e^{-i\bar{A}s} \sum_{\{ j\in\Z\setminus\{0\}\}} \frac{1}{j}  e^{ij(\theta_1-\theta_2-s)}.
\end{split}
\end{equation}
Let $\theta=(\theta_1-\theta_2)\pm s$ with $0\leq s\leq \pi$,
\begin{equation}
\begin{split}
S(\theta)=\sum_{\{ j\in\Z\setminus\{0\}\}} \frac{1}{j}  e^{ij\theta}
\end{split}
\end{equation}
then $S(\theta)$ converges to a periodic function extended by sawtooth function
\begin{equation}
f(\theta)=\begin{cases} \frac{\pi-\theta}2, \quad 0<\theta<2\pi,\\
0,\qquad\quad \theta=0.
\end{cases}
\end{equation}
Plugging this into \eqref{G2-key3}, therefore we prove
\begin{equation}
\begin{split}
\Big| \int_{|\theta_1-\theta_2|}^\pi e^{z\cos(s)} \sum_{\{ j\in\Z, |j|\geq J\}} \sin(|j_A|s) (\sqrt{\nu_j}-|j_A|)s  e^{ij(\theta_1-\theta_2)} ds\Big|\leq C_J e^{z\cos(\theta_1-\theta_2)}.
\end{split}
\end{equation}
\end{proof}

{\bf Step 2:} We next consider the term $D(t;x,y)$. Similarly as arguing $G(t;x,y)$, we use Lemma \ref{lem:asy-eig}
to split $D(t;x,y)$ into two terms
$$D(t;x,y)=D_1(t;x,y)+D_2(t;x,y)$$
where
\begin{equation}\label{D1}
D_1(t;x,y)=(\pi t)^{-1}e^{-\frac{r_1^2+r_2^2}{4t}}\sum_{k=1}^{K} \psi_{k}(\theta_1) \overline{\psi_{k}(\theta_2)} \sin(\sqrt{\mu_k}\pi)\int_0^\infty e^{-z\cosh s} e^{-s\sqrt{\mu_k}} ds,
\end{equation}
and
\begin{align}\label{D2}
&D_2(t;x,y)=(\pi t)^{-1}e^{-\frac{r_1^2+r_2^2}{4t}}e^{-i\big(\lfloor\tilde{A}+1/2\rfloor(\theta_1-\theta_2)+\int_{\theta_1}^{\theta_2} A(\theta')d\theta'\big)} \\\nonumber
&\times \sum_{\{ j\in\Z, |j|\geq J\}}  \Big(e^{i(\tilde{A}+j)\theta_1}+R_j(\theta_1)\Big)
\Big(e^{-i(\tilde{A}+j)\theta_2}+R_j(\theta_2)\Big) \sin(\sqrt{\nu_j}\pi)\int_0^\infty e^{-z\cosh s} e^{-s\sqrt{\nu_j}} ds.
\end{align}
We estimate $D_1(t;,x,y)$ by
\begin{equation}\label{D1'}
|D_1(t;x,y)|\leq C_K t^{-1}e^{-\frac{r_1^2+r_2^2}{4t}}e^{-z}\int_0^\infty e^{-s\sqrt{\mu_1}} ds \leq C_K\mu_1^{-1/2} t^{-1} e^{-\frac{|x-y|}{4t}},
\end{equation}
where we use the fact $z=\frac{r_1r_2}{2t}$ and $|\psi_k|\leq C$. Next we consider $D_2(t;,x,y)$, since $|R_j|\leq C_J |j|^{-3}$ and $\mu_1\leq \nu_j$ when $j\geq J$, then
\begin{align}\nonumber
|D_2(t;x,y)|\leq& C_J t^{-1}e^{-\frac{r_1^2+r_2^2}{4t}}e^{-z}\int_0^\infty e^{-s\sqrt{\mu_1}} ds\sum_{|j|\geq J} |j|^{-3}\\\nonumber
&+C t^{-1}e^{-\frac{r_1^2+r_2^2}{4t}}
\Big| \sum_{\{ j\in\Z, |j|\geq J\}}  e^{i(\tilde{A}+j)(\theta_1-\theta_2)} \sin(\sqrt{\nu_j}\pi)\int_0^\infty e^{-z\cosh s} e^{-s\sqrt{\nu_j}} ds\Big|\\\label{D2'}
\leq& C_Jt^{-1} e^{-\frac{|x-y|}{4t}},
\end{align}
which is a consequence of the following lemma
\begin{lemma}\label{lem:D2-key} There holds
\begin{equation}\label{D2-key}
\begin{split}
\Big| \sum_{\{ j\in\Z, |j|\geq J\}}  e^{ij(\theta_1-\theta_2)} \sin(\sqrt{\nu_j}\pi)\int_0^\infty e^{-z\cosh s} e^{-s\sqrt{\nu_j}} ds\Big|
\leq C_J e^{-z}.
\end{split}
\end{equation}
\end{lemma}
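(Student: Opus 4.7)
The plan is to mirror the strategy used for Lemma \ref{lem:G2-key}, but with the diffractive integral $\int_0^\infty e^{-z\cosh s - s\sqrt{\nu_j}}ds$ in place of $\int_{|\theta_1-\theta_2|}^{\pi} e^{z\cos s}\cos(\sqrt{\nu_j}s)\,ds$. First I would invoke Lemma \ref{lem:asy-eig} to write $\sqrt{\nu_j}=|j_A|+\tau_j$ with $|j_A|=|j+\bar A|$, $\bar A=\tilde A-\lfloor \tilde A+\tfrac12\rfloor\in (-\tfrac12,\tfrac12)\setminus\{0\}$ and $|\tau_j|\le C|j|^{-1}$, and then Taylor-expand
\begin{equation*}
\sin(\sqrt{\nu_j}\pi)=\sin(|j_A|\pi)+O(|j|^{-1}),\qquad e^{-s\sqrt{\nu_j}}=e^{-s|j_A|}\bigl(1+O(s|j|^{-1})\bigr).
\end{equation*}

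Using the elementary bound $\int_0^\infty e^{-z\cosh s}s^k e^{-s|j_A|}ds\leq e^{-z}\,\Gamma(k+1)|j_A|^{-k-1}$, all remainder terms from this expansion produce sums of the form $\sum_{|j|\geq J}|j|^{-2}e^{-z}$ which are absolutely convergent and bounded by $C_J e^{-z}$. This reduces the estimate to the principal sum
\begin{equation*}
S_{\text{main}}:=\sum_{|j|\geq J}e^{ij\psi}\sin(|j_A|\pi)\int_0^\infty e^{-z\cosh s}e^{-s|j_A|}\,ds,\quad \psi:=\theta_1-\theta_2.
\end{equation*}
Using $\sin(\pi|j_A|)=(-1)^j\sin(\pi\bar A)$ for $j\geq 1$ and $=(-1)^{j+1}\sin(\pi\bar A)$ for $j\leq -1$, I pull out $\sin(\pi\bar A)$, interchange the (absolutely convergent, for $s>0$) sum with the integral, and sum the two geometric series for $j\geq J$ and $j\leq -J$ separately. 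This yields
\begin{equation*}
S_{\text{main}}=\sin(\pi\bar A)\int_0^\infty e^{-z\cosh s}\Bigl[e^{-s\bar A}\tfrac{(-e^{i\psi}e^{-s})^J}{1+e^{i\psi}e^{-s}}-e^{s\bar A}\tfrac{(-e^{-i\psi}e^{-s})^J}{1+e^{-i\psi}e^{-s}}\Bigr]ds=:\sin(\pi\bar A)\int_0^\infty e^{-z\cosh s}A(s,\psi)\,ds.
\end{equation*}

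The main obstacle, and the key technical step, is to show $\int_0^\infty |A(s,\psi)|\,ds\leq C_J$ uniformly in $\psi$. Away from $\psi=\pi$ each denominator $1+e^{\pm i\psi}e^{-s}$ is bounded below, and the factor $e^{-sJ}$ from $(-e^{\pm i\psi}e^{-s})^J$ forces exponential decay in $s$; so the integrand is dominated in a trivial way. Near the critical point $(s,\delta)=(0,0)$ with $\delta=\psi-\pi$, each individual fraction behaves like $(s\mp i\delta)^{-1}$, which is not integrable in $s$ for $\delta=0$. A careful Taylor expansion shows that the singular pieces cancel in the difference, leaving
\begin{equation*}
A(s,\psi)\;=\;e^{-sJ}\cdot\frac{2i\delta-2\bar A s^2+2iJs\delta+O(s^3+\delta^3)}{s^2+\delta^2}.
\end{equation*}
The leading term is a Poisson-kernel, whose $s$-integral is $\pi$ uniformly in $\delta$; the remaining terms are bounded by $|\bar A|+J$ and integrate against $e^{-sJ}$. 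Combining with the trivial bound $e^{-z\cosh s}\leq e^{-z}$ then gives $|S_{\text{main}}|\leq C_J e^{-z}$, which, together with the error bound from Step 1, completes the proof. The hard part will be turning the formal cancellation near $(s,\psi)=(0,\pi)$ into a clean uniform estimate on $A(s,\psi)$.
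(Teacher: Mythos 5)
Your approach is correct, and the key cancellation you identify near $(s,\psi)=(0,\pi)$ is real, but it departs from the paper's route at exactly that point. Both proofs start by Taylor-expanding $\sin(\sqrt{\nu_j}\pi)$ and $e^{-s\sqrt{\nu_j}}$ to reduce to the principal sum $\sum_{|j|\geq J}e^{ij(\theta_1-\theta_2)}\sin(|j_A|\pi)e^{-s|j_A|}$. (One technical caution here: since $\tau_j=\sqrt{\nu_j}-|j_A|$ can have either sign, the remainder in $e^{-s\sqrt{\nu_j}}=e^{-s|j_A|}\big(1+O(s|j|^{-1})\big)$ should carry an extra $e^{-cs}$ factor, as the paper records in \eqref{equ:IID2-key}.) The paper then completes the sum to all $j\in\Z$ --- the finite correction from $|j|<J$ is trivially $O_J(e^{-z})$ --- and evaluates the full geometric series in closed form as
\[
\sin(|\bar A|\pi)e^{-|\bar A|s}+\sin(\bar A\pi)\,\frac{(e^{-s}-\cos\psi)\sinh(\bar As)+i\sin\psi\cosh(\bar As)}{\cosh s-\cos\psi},
\qquad \psi=\theta_1-\theta_2+\pi.
\]
Since $\cosh s-\cos\psi=\sinh^2(s/2)+\sin^2(\psi/2)$, this expression is manifestly pole-free in $s$, and the uniform integral bounds \eqref{equ:ream1}--\eqref{equ:ream3} drop out directly. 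You instead keep the $J$-truncated series, which introduces the factors $(-e^{\pm i\psi}e^{-s})^J$ and hence forces you to verify by hand the cancellation of the $(s\mp i\delta)^{-1}$ poles in $A(s,\psi)$. Your numerator expansion has the correct leading behaviour (the first-order poles do cancel, leaving a Poisson-kernel piece plus terms bounded by $O(J+|\bar A|)$ after dividing by $s^2+\delta^2$, each integrable against $e^{-Js}$), so the plan is sound, but it requires precisely the uniform-in-$\psi$ estimate you yourself flag as ``the hard part.'' Completing the sum to all $j\in\Z$, as the paper does, sidesteps this entirely: one pays a constant $C_J$ once and obtains a singularity-free closed form for free.
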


\begin{proof}
We first write
 \begin{equation}
\begin{split}
&\sum_{\{ j\in\Z, |j|\geq J\}}  e^{ij(\theta_1-\theta_2)} \sin(\sqrt{\nu_j} \pi)e^{-s\sqrt{\nu_j}} \\
=&\sum_{\{ j\in\Z, |j|\geq J\}}   e^{ij(\theta_1-\theta_2)}\big(\sin(\sqrt{\nu_j} \pi)-\sin(|j_A| \pi)\big)e^{-s\sqrt{\nu_j}}\\
&+\sum_{\{ j\in\Z, |j|\geq J\}}   e^{ij(\theta_1-\theta_2)}\sin(|j_A| \pi)\big(e^{-s\sqrt{\nu_j}}-e^{-s|j_A|}\big)\\
&+\sum_{\{ j\in\Z, |j|\geq J\}}   e^{ij(\theta_1-\theta_2)}\sin(|j_A| \pi)e^{-s|j_A|}.
  \end{split}
  \end{equation}
 Note that
 $$\sin(\sqrt{\nu_j} \pi)-\sin(|j_A| \pi)=\cos(|j_A|\pi)(\sqrt{\nu_j}-|j_A|)\pi+O(|\sqrt{\nu_j}-|j_A||^2)$$
 and
 $$\sqrt{\nu_j}-|j_A|=\frac{\nu_j-j_A^2}{\sqrt{\nu_j}+j_A}=\frac{\tilde{a}}{|j|}+O(|j|^{-2}),$$
 then
 \begin{equation}\label{equ:ID2-key}
\begin{split}
I:&=\Big| \int_0^\infty e^{-z\cosh s} \sum_{\{ j\in\Z, |j|\geq J\}}   e^{ij(\theta_1-\theta_2)}\big(\sin(\sqrt{\nu_j} \pi)-\sin(|j_A| \pi)\big)e^{-s\sqrt{\nu_j}} ds\Big|\\
&\leq C\int_0^\infty e^{-z\cosh s} \sum_{\{ j\in\Z, |j|\geq J\}}   (|j|^{-1}+O(|j|^{-2}))e^{-s\sqrt{\nu_j}}  ds\\
&\leq C_J e^{-z} \sum_{\{ j\in\Z, |j|\geq J\}}   (|j|^{-1}+O(|j|^{-2}))\nu_j^{-\frac12} \int_0^\infty e^{-s} ds\leq C_J e^{-z}.\\
\end{split}
\end{equation}

 Note that
 $$e^{-s\sqrt{\nu_j}}-e^{-s|j_A|}=e^{-s|j_A|}(\sqrt{\nu_j}-|j_A|)s+O(|\sqrt{\nu_j}-|j_A||^2s^2e^{-cs})$$
 and
 $$\sqrt{\nu_j}-|j_A|=\frac{\nu_j-j_A^2}{\sqrt{\nu_j}+j_A}=\frac{\tilde{a}}{|j|}+O(|j|^{-2}),$$
 then, we obtain
 \begin{equation}\label{equ:IID2-key}
\begin{split}
II:&=\Big| \int_0^\infty e^{-z\cosh s} \sum_{\{ j\in\Z, |j|\geq J\}}   e^{ij(\theta_1-\theta_2)}\sin(|j_A| \pi)\big(e^{-s\sqrt{\nu_j}}-e^{-s|j_A|}\big) ds\Big|\\
&\leq C\int_0^\infty e^{-z\cosh s} \sum_{\{ j\in\Z, |j|\geq J\}}   (|j|^{-1}+O(|j|^{-2}))e^{-s|j_A|} s ds\\
&\leq C_J e^{-z} \sum_{\{ j\in\Z, |j|\geq J\}}   (|j|^{-1}+O(|j|^{-2}))|j_A|^{-2} \int_0^\infty e^{-s} sds\leq C_J e^{-z}.\\
\end{split}
\end{equation}
We follow  the same argument in \cite{FZZ} to obtain
\begin{equation}
\Big| \int_0^\infty e^{-z\cosh s} \sum_{\{ j\in\Z\}}   e^{ij(\theta_1-\theta_2)}\sin(|j_A| \pi)e^{-s|j_A|} ds\Big|\leq Ce^{-z}.
\end{equation}

Recall $|j_A|=|j+\bar{A}|, j\in\Z$ and $\bar{A}\in(-1,1)\setminus\{0\}$,
then
\begin{equation}
|j_A|=|j+\bar{A}|=
\begin{cases}
j+\bar{A},\qquad &j\geq1;\\
|\bar{A}|,\qquad &j=0;\\
-(j+\bar{A}),\qquad &j\leq -1.
\end{cases}
\end{equation}
Therefore we obtain
\begin{equation}
\begin{split}
\sin(\pi|j_A|)=\sin(\pi|j+\bar{A}|)=\begin{cases}
\cos(j\pi)\sin(\bar{A}\pi),\qquad &j\geq1;\\
\sin(|\bar{A}|\pi),\qquad &j=0;\\
-\cos(j\pi)\sin(\bar{A}\pi),\qquad &j\leq -1.
\end{cases}
\end{split}
\end{equation}
Therefore we furthermore have
\begin{equation}
\begin{split}
&\sum_{j\in\Z}\sin(\pi|j+\bar{A}|)e^{-s|j+\bar{A}|}e^{ij(\theta_1-\theta_2)}\\
=&\sin(\bar{A}\pi)\sum_{j\geq 1} \frac{e^{ij\pi}+e^{-ij\pi}}2 e^{-s(j+\bar{A})}e^{ij(\theta_1-\theta_2)}+\sin(|\bar{A}|\pi)e^{-|\bar{A}|s}\\&-\sin(\bar{A}\pi)\sum_{j\leq-1} \frac{e^{ij\pi}+e^{-ij\pi}}2 e^{s(j+\bar{A})}e^{ij(\theta_1-\theta_2)}\\
=&\sin(|\bar{A}|\pi)e^{-|\bar{A}|s}+\frac{\sin(\bar{A}\pi)}2\Big(e^{-s\bar{A}}\sum_{j\geq1} e^{-js} \big(e^{ij(\theta_1-\theta_2+\pi)} +e^{ij(\theta_1-\theta_2-\pi)}\big)\\&\qquad\qquad\qquad- e^{s\bar{A}} \sum_{j\geq 1} e^{-js} \big(e^{-ij(\theta_1-\theta_2+\pi)} +e^{-ij(\theta_1-\theta_2-\pi)}\big)\Big).
\end{split}
\end{equation}
Note that
\begin{equation}\label{sum-j}
\sum_{j=1}^\infty e^{ijz}=\frac{e^{iz}}{1-e^{iz}},\qquad \mathrm{Im} z>0,
\end{equation}
we finally obtain
\begin{align}
&\sum_{j\in\Z}\sin(\pi|j+\bar{A}|)e^{-s|j+\bar{A}|}e^{ij(\theta_1-\theta_2)}\\\nonumber
=&\sin(|\bar{A}|\pi)e^{-|\bar{A}|s}+\frac{\sin(\bar{A}\pi)}2\Big(\frac{e^{-(1+\bar{A})s+i(\theta_1-\theta_2+\pi)}}
{1-e^{-s+i(\theta_1-\theta_2+\pi)}}+\frac{e^{-(1+\bar{A})s+i(\theta_1-\theta_2-\pi)}}{1-e^{-s+i(\theta_1-\theta_2-\pi)}}\\\nonumber
&\qquad\qquad\qquad-\frac{e^{-(1-\bar{A})s-i(\theta_1-\theta_2+\pi)}}{1-e^{-s-i(\theta_1-\theta_2+\pi)}}-
\frac{e^{-(1-\bar{A})s-i(\theta_1-\theta_2-\pi)}}{1-e^{-s-i(\theta_1-\theta_2-\pi)}}\Big)\\\nonumber
=&\sin(|\bar{A}|\pi)e^{-|\bar{A}|s}+\sin(\bar{A}\pi)\Big(\frac{e^{-(1+\bar{A})s+i(\theta_1-\theta_2+\pi)}}
{1-e^{-s+i(\theta_1-\theta_2+\pi)}}-\frac{e^{-(1-\bar{A})s-i(\theta_1-\theta_2+\pi)}}{1-e^{-s-i(\theta_1-\theta_2+\pi)}}\Big)\\\nonumber
=&\sin(|\bar{A}|\pi)e^{-|\bar{A}| s}+\sin(\bar{A}\pi)\frac{(e^{- s}-\cos(\theta_1-\theta_2+\pi))\sinh(\bar{A} s)
+i\sin(\theta_1-\theta_2+\pi)\cosh(\bar{A} s)}{\cosh( s)-\cos(\theta_1-\theta_2+\pi)}.
\end{align}
Therefore it suffices to estimate
\begin{equation*}
\begin{split}
 \int_0^\infty &e^{-z\cosh s} \Big(\sin(|\bar{A}|\pi)e^{-|\bar{A}|s} \\&+\sin(\bar{A}\pi)\frac{(e^{-s}-\cos(\theta_1-\theta_2+\pi))\sinh(\bar{A} s)+i\sin(\theta_1-\theta_2+\pi)\cosh(\bar{A} s)}{\cosh(s)-\cos(\theta_1-\theta_2+\pi)}\Big) ds \leq C e^{-z}.
\end{split}
\end{equation*}
Thus, indeed, we only need to show that for $\bar{A}\in(-1,1)\backslash\{0\}$
  \begin{align}\label{equ:ream1}
    \int_0^\infty e^{-|\bar{A}|s}\;ds\lesssim&1\\\label{equ:ream2}
    \int_0^\infty  \Big|\frac{(e^{-s}-\cos(\theta_1-\theta_2+\pi))\sinh(\bar{A} s)}{\cosh(s)-\cos(\theta_1-\theta_2+\pi)}\Big|\;ds\lesssim&1
    \\\label{equ:ream3}
     \int_0^\infty  \Big|\frac{\sin(\theta_1-\theta_2+\pi)\cosh(\bar{A} s)}{\cosh(s)-\cos(\theta_1-\theta_2+\pi)}\Big|\;ds\lesssim&1.
  \end{align}
  It is easy to check \eqref{equ:ream1}.\vspace{0.2cm}

  {\bf Estimate of \eqref{equ:ream2}:}  Note that
$$\cosh(\tau)
-\cos(\theta_1-\theta_2+\pi)=\sinh^2\big(\tfrac{\tau}2\big)+\sin^2\big(\tfrac{\theta_1-\theta_2+\pi}2\big),$$
  we get
 \begin{align*}
    & \int_0^\infty  \Big|\frac{(e^{-s}-\cos(\theta_1-\theta_2+\pi))\sinh(\bar{A} s)}{\cosh(s)-\cos(\theta_1-\theta_2+\pi)}\Big|\;ds\\
    =&\int_0^1 \Big|\frac{(e^{-s}-1+1-\cos(\theta_1-\theta_2+\pi))\sinh(\bar{A} s)}{\sinh^2\big(\tfrac{s}2\big)+\sin^2\big(\tfrac{\theta_1-\theta_2+\pi}2\big)}\Big|\;ds
     \\&\qquad +\int_1^\infty \Big|\frac{(e^{-s}-\cos(\theta_1-\theta_2+\pi))\sinh(\bar{A} s)}{\sinh^2\big(\tfrac{s}2\big)+\sin^2\big(\tfrac{\theta_1-\theta_2+\pi}2\big)}\Big|\;ds\\
    \lesssim&\int_0^1\frac{(s+\tfrac{\theta_1-\theta_2+\pi}{2})s}{s^2+(\tfrac{\theta_1-\theta_2+\pi}{2})^2}\;ds
    +\int_1^\infty e^{-(1-\bar{A})s}\;ds\\
    \lesssim&1.
 \end{align*}

  {\bf Estimate of \eqref{equ:ream3}:} we have
  \begin{align*}
     &  \int_0^\infty  \Big|\frac{\sin(\theta_1-\theta_2+\pi)\cosh(\bar{A} s)}{\cosh(s)-\cos(\theta_1-\theta_2+\pi)}\Big|\;ds\\
    \lesssim& \int_0^1  \Big|\frac{\sin^2\big(\tfrac{\theta_1-\theta_2+\pi}2\big)}{\sinh^2\big(\tfrac{s}2\big)+\sin^2\big(\tfrac{\theta_1-\theta_2+\pi}2\big)}\Big|\;ds
    +\int_1^\infty \frac{\cosh(\bar{A} s)}{\sinh^2\big(\tfrac{s}2\big)+\sin^2\big(\tfrac{\theta_1-\theta_2+\pi}2\big)}\;ds\\
    \lesssim& \int_0^1  \frac{b}{s^2+b^2}\;ds+\int_1^\infty  e^{-(1-\bar{A})s}\;ds\\
    \lesssim&1
  \end{align*}
  where $b=\sin^2\big(\tfrac{\theta_1-\theta_2+\pi}2\big).$
\end{proof}

{\bf Case II: Resonant case $\Phi_{{\A}}\in\frac12\Z$. } In the resonant case $\Phi_{{\A}}\in\frac12\Z$, they  showed analogous result of
\cite[Lemma 2.1]{FFFP} but with two differences in their preprint version \cite[Lemma B.9 and B.10]{FFFParxiv}. The first one is that, if $a(\theta)$ is non-symmetric,  there exists $\theta_j\in [0,2\pi]$ such that the main part of the eigenfunction is in the form of
$$\cos(j(\theta-\theta_j)),\,\text{when} \; \Phi_{{\A}}\in\Z; \qquad \cos((j+\frac12)(\theta-\theta_j)),\; \text{when}\, \Phi_{{\A}}\in\frac12\Z\setminus\Z.$$
The argument in non-resonant case is too sensitive to recover the issue of $\theta_j$, for example, see the Poisson summation formula and \eqref{sum-j} when $z$ depends on $j$. However, under the assumption that
$a(\theta)$ is symmetric with respect to $\theta=\pi$, this issue disappears since $\theta_j$ vanish.
The second difference is that the remainder term $R_j$ of the asymptotic expansion satisfies $\|R_j\|_{L^\infty}=O(|j|^{-1})$ which
is less strong than in the non-resonant case.
We recover this issue by gaining a bit more decay from the modified Bessel function to ensure the series converge.
In details, we need to show
\begin{equation*}
\begin{split}
\sum_{\{ j\in\Z, |j|\geq J\}} \|R_j\|_{L^\infty}\Big|
\frac1{\pi}\int_{|\theta_1-\theta_2|}^\pi e^{z\cos(s)} \cos(\sqrt{\nu_j} s) ds-&\frac{\sin(\sqrt{\nu_j}\pi)}{\pi}\int_0^\infty e^{-z\cosh s} e^{-s\sqrt{\nu_j}} ds\Big|
\\&\leq C e^{z\cos(\theta_1-\theta_2)}.
\end{split}
\end{equation*}
To this end, recall $\|R_j\|_{L^\infty}=O(|j|^{-1})$, it suffices to show that
\begin{equation}\label{res-R}
\begin{split}
\Big|
&\frac1{\pi}\int_{|\theta_1-\theta_2|}^\pi e^{z\cos(s)} \cos(\sqrt{\nu_j} s) ds-\frac{\sin(\sqrt{\nu_j}\pi)}{\pi}\int_0^\infty e^{-z\cosh s} e^{-s\sqrt{\nu_j}} ds\Big|\\
\leq& C |j|^{-1} e^{z\cos(\theta_1-\theta_2)}.
\end{split}
\end{equation}
By using integration by parts, we obtain
\begin{equation*}
\begin{split}
&\frac1{\pi}\int_{|\theta_1-\theta_2|}^\pi e^{z\cos(s)} \cos(\sqrt{\nu_j} s) ds\\
=&\frac1{\pi \sqrt{\nu_j}} \Big(e^{z\cos(s)} \sin(\sqrt{\nu_j} s)\big|_{s=|\theta_1-\theta_2|}^{s=\pi}+\int_{|\theta_1-\theta_2|}^\pi e^{z\cos(s)} (z\sin s) \sin(\sqrt{\nu_j} s) ds\Big)
\end{split}
\end{equation*}
and
\begin{equation*}
\begin{split}
&-\frac{\sin(\sqrt{\nu_j}\pi)}{\pi}\int_0^\infty e^{-z\cosh s} e^{-s\sqrt{\nu_j}} ds\\
=&\frac{\sin(\sqrt{\nu_j}\pi)}{\pi \sqrt{\nu_j}} \Big(e^{-z\cosh s} e^{-s\sqrt{\nu_j}}\big|_{s=0}^{s=\infty}+\int_{0}^\infty e^{-z\cosh s} (z\sinh s) e^{-\sqrt{\nu_j}s}ds\Big).
\end{split}
\end{equation*}
Note that
\begin{equation*}
\begin{split}
e^{z\cos(s)} \sin(\sqrt{\nu_j} s)\big|_{s=\pi}=\sin(\sqrt{\nu_j}\pi) e^{-z\cosh s} e^{-s\sqrt{\nu_j}}\big|_{s=0}=\sin(\sqrt{\nu_j}\pi) e^{-z},
\end{split}
\end{equation*}
thus
\begin{equation*}
\begin{split}
\Big|
&\frac1{\pi}\int_{|\theta_1-\theta_2|}^\pi e^{z\cos(s)} \cos(\sqrt{\nu_j} s) ds-\frac{\sin(\sqrt{\nu_j}\pi)}{\pi}\int_0^\infty e^{-z\cosh s} e^{-s\sqrt{\nu_j}} ds\Big|\\
\leq& \frac1{\pi \sqrt{\nu_j}}\Big(e^{z\cos(\theta_1-\theta_2)}+\big|\int_{|\theta_1-\theta_2|}^\pi e^{z\cos(s)} (z\sin s) \sin(\sqrt{\nu_j} s) ds\big|+\\&\qquad\qquad\qquad+
\big|\int_{0}^\infty e^{-z\cosh s} (z\sinh s) e^{-\sqrt{\nu_j}s}ds\big|\Big).
\end{split}
\end{equation*}
We observe that
\begin{equation*}
\begin{split}
&\big|\int_{|\theta_1-\theta_2|}^\pi e^{z\cos(s)} (z\sin s) \sin(\sqrt{\nu_j} s) ds\big|\\
\leq& C \int_{|\theta_1-\theta_2|}^\pi e^{z\cos(s)} (z\sin s) ds\leq C e^{z\cos(\theta_1-\theta_2)}
\end{split}
\end{equation*}
and
\begin{equation*}
\begin{split}
\big|\int_{0}^\infty e^{-z\cosh s} (z\sinh s) e^{-\sqrt{\nu_j}s}ds\big|\leq C\int_{0}^\infty e^{-z\cosh s} (z\sinh s) ds\leq Ce^{-z}.
\end{split}
\end{equation*}
Since $\sqrt{\nu_j}\sim |j|$, therefore we obtain \eqref{res-R}. Hence we complete the proof of Proposition \ref{prop:heat}.
\end{proof}

\section{Proof of Theorem \ref{thm:Stri}}

In this section, we aim to prove Theorem
\ref{thm:Stri} by considering the perturbation both from the magnetic potential $\A$ and electrical potential $a$.\vspace{0.2cm}

We first recall the local smoothing for Klein-Gordon associated with $\LL_{{\A},a}$ which can be proved by following
the same argument as in  \cite[Proposition 4.1]{CYZ}. We state the results but omit the proof.

\begin{proposition}[Local smoothing estimate]\label{prop:lse}
Let $a\in W^{1,\infty}(\mathbb{S}^{1},\mathbb{R})$, ${\A}\in
W^{1,\infty}(\mathbb{S}^{1},\mathbb{R}^2)$, and assume \eqref{eq:transversal}, \eqref{equ:condassa}.
Let $L_{{\A},a}$ be the spherical operator in \eqref{equ:laadefadd}, with first eigenvalue $\mu_1({\A},a)$, and denote by $\nu_0:=\sqrt{\mu_1({\A},a)}$.
Let $v$ be a solution to \begin{equation}\label{eq:kg}
\begin{cases}
\partial_t^2v+\LL_{{\A},a}v+v=0,\quad (t,x)\in\R\times\R^2
\\
v(0,x)=v_0(x),
\\
\partial_tv(0,x)=v_1(x).
\end{cases}
\end{equation}
Then,

$\bullet$ $($Low frequency estimate$)$ for $\beta\in [1, 1+\nu_0)$
\begin{equation}\label{l-local-s}
\begin{split}
\|r^{-\beta}\varphi_0(\sqrt{\LL_{{\A},a}}) v(t,x)\|_{L^2_t(\R;L^2(\R^2))}\leq
C\left(\|v_0\|_{L^{2}(\R^2)}+\|v_1\|_{L^{2}(\R^2)}\right),
\end{split}
\end{equation}

$\bullet$ $($High frequency estimate$)$ for $\beta\in (1/2, 1+\nu_0)$
\begin{equation}\label{h-local-s}
\begin{split}
\|r^{-\beta}(1-\varphi_0)(\sqrt{\LL_{{\A},a}}) v(t,x)\|_{L^2_t(\R;L^2(\R^2))}\leq
C\left(\|v_0\|_{H_{{\A},a}^{\beta-\frac12}(\R^2)}+\|v_1\|_{H_{{\A},a}^{\beta-\frac32}(\R^2)}\right).
\end{split}
\end{equation}

\end{proposition}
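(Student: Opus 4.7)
Following \cite{CYZ}, the strategy is to reduce both local smoothing inequalities to uniform weighted resolvent bounds via Kato's theory of smooth perturbations, and then to treat the resolvent of $\LL_{{\A},a}$ by perturbing off the purely magnetic resolvent from Proposition \ref{prop:res-ker-out}. Writing the solution of \eqref{eq:kg} through the spectral theorem as
$$v(t,x)=\cos\!\bigl(t\sqrt{1+\LL_{{\A},a}}\bigr)v_0+\frac{\sin\!\bigl(t\sqrt{1+\LL_{{\A},a}}\bigr)}{\sqrt{1+\LL_{{\A},a}}}\,v_1,$$
Plancherel in $t$ combined with Stone's formula reduces \eqref{l-local-s}--\eqref{h-local-s} to a uniform bound of the form
$$\sup_{\mu>0}\sqrt{1+\mu^2}\;\Big\|r^{-\beta}\,\mathrm{Im}\bigl(\LL_{{\A},a}-\mu^2-i0\bigr)^{-1}\chi(\mu)^2\,r^{-\beta}\Big\|_{L^2\to L^2}<\infty,$$
with $\chi=\varphi_0$ for the low-frequency estimate and $\chi=1-\varphi_0$ for the high-frequency one.

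For the low-frequency piece I would use that $\mu$ is bounded on the support of $\varphi_0$, so the needed bound is essentially a Hardy estimate for $\LL_{{\A},a}$. Directly from \eqref{equ:ghardyLW} and \eqref{equ:condassa} we have
$$\nu_0^2\int_{\R^2}\frac{|f|^2}{|x|^2}\,dx\leq \bigl\langle \LL_{{\A},a}f,f\bigr\rangle_{L^2(\R^2)},$$
and iterating this along the Hardy--Rellich chain (as in \cite[Lemma 2.4]{FZZ}, refined using the Sobolev embedding of Proposition \ref{prop:sob}) yields $\||x|^{-\beta}(1+\LL_{{\A},a})^{-\beta/2}\|_{L^2\to L^2}<\infty$ for every $\beta\in[1,1+\nu_0)$. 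Since $(1+\LL_{{\A},a})^{\beta/2}\varphi_0(\sqrt{\LL_{{\A},a}})$ is uniformly bounded on $L^2$, this gives \eqref{l-local-s}.

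For the high-frequency piece I would treat the critical electric term $a(\hat x)/|x|^2$ as an inverse-square perturbation of $\LL_{{\A},0}$ via the resolvent identity, reducing matters to the purely magnetic Agmon--H\"ormander bound
$$\bigl\|r^{-\beta}(\LL_{{\A},0}-\mu^2\pm i0)^{-1}r^{-\beta}\bigr\|_{L^2\to L^2}\lesssim \mu^{-1},\qquad \beta>\tfrac12.$$
This follows directly from Proposition \ref{prop:res-ker-out}: the \emph{geometric} term \eqref{spect-g} is bounded via $|A_\alpha|\lesssim 1$ and the free flat-space limiting absorption principle, while the \emph{diffractive} term \eqref{spect-d} is treated slice-by-slice in $s$ using the integrability $\int_0^\infty|B_\alpha(s,\theta_1,\theta_2)|\,ds<\infty$ from \eqref{est:AB} combined with the oscillation/decay $|a_\pm(r)|\lesssim(1+r)^{-1/2}$. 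The regularity loss $\beta-1/2$ in \eqref{h-local-s} comes from the scaling factor $\mu^{-1}$ above balanced against the $\sqrt{1+\mu^2}$ in the reduced estimate. The inverse-square term $a(\hat x)/|x|^2$ is then reabsorbed by Hardy as long as $\beta<1+\nu_0$.

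The main obstacle will be reaching the upper endpoint $\beta\to(1+\nu_0)^-$ uniformly in $\mu$: this requires the sharp constant in Hardy to coincide with the first eigenvalue $\sqrt{\mu_1({\A},a)}=\nu_0$, and the diffractive term of the spectral measure must be controlled \emph{uniformly} in the resolvent parameter $\mu$ against the weight $r^{-\beta}$. Once these weighted resolvent bounds are established, assembling the low- and high-frequency contributions via the Littlewood--Paley decomposition of \cite[Proposition 3.1]{FZZ} yields the full Proposition \ref{prop:lse}.
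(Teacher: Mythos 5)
The paper omits the proof of Proposition~\ref{prop:lse} entirely, deferring to \cite[Proposition 4.1]{CYZ}, which (like \cite{BPSS,BPST,PST}) proceeds by expanding in the eigenfunctions $\psi_k$ of the angular operator $L_{{\A},a}$ and reducing to one--dimensional Bessel/Hankel local smoothing on each mode; this is what produces the sharp threshold $\beta<1+\nu_0$ with $\nu_0=\sqrt{\mu_1({\A},a)}$ governed by the \emph{full} angular operator. Your opening reduction to uniform weighted resolvent bounds via Kato smoothing and Plancherel is a legitimate starting point, but there are three genuine gaps in what follows.

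First, the claimed Agmon--H\"ormander bound
$\|r^{-\beta}(\LL_{{\A},0}-\mu^2\pm i0)^{-1}r^{-\beta}\|_{L^2\to L^2}\lesssim\mu^{-1}$ for all $\beta>\tfrac12$
is incompatible with scale invariance: since $\LL_{{\A},0}$ is homogeneous of degree $-2$, the rescaling $x\mapsto\mu x$ shows
$$
\|r^{-\beta}(\LL_{{\A},0}-\mu^2\pm i0)^{-1}r^{-\beta}\|_{L^2\to L^2}
=\mu^{2\beta-2}\;\|\,|\cdot|^{-\beta}(\LL_{{\A},0}-1\mp i0)^{-1}|\cdot|^{-\beta}\,\|_{L^2\to L^2},
$$
so the exponent $\mu^{-1}$ occurs only at the scale--critical weight $\beta=\tfrac12$. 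The correct bookkeeping for \eqref{h-local-s} is that this $\mu^{2\beta-2}$ is offset by the $\mu^{-(2\beta-1)}$ coming from measuring the data in $H^{\beta-1/2}_{{\A},a}$, not by ``the scaling factor $\mu^{-1}$ balanced against $\sqrt{1+\mu^2}$''.

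Second, the low--frequency argument does not close: Hardy together with $L^2$-boundedness of $(1+\LL_{{\A},a})^{\beta/2}\varphi_0(\sqrt{\LL_{{\A},a}})$ only yields an $L^\infty_t L^2_x$ bound, uniform in $t$; it says nothing about square--integrability in time over all of $\R$. The estimate \eqref{l-local-s} is a genuine smoothing gain and must come from the uniform (in $\mu$, including the limit $\lambda=\sqrt{\mu^2-1}\to0$) weighted resolvent bound you set up at the start, or equivalently a Morawetz--type multiplier. The sentence ``this gives \eqref{l-local-s}'' is a non sequitur.

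Third, the sharp threshold $\beta<1+\nu_0$ with $\nu_0=\sqrt{\mu_1({\A},a)}$ will not emerge from treating $a(\hat x)/|x|^2$ as a perturbation of $\LL_{{\A},0}$ and ``reabsorbing by Hardy'': the magnetic Hardy inequality \eqref{equ:ghardyLW} has constant $\min_{k\in\Z}|k-\Phi_{\A}|^2$, which is the first eigenvalue of $L_{{\A},0}$, not of $L_{{\A},a}$. In general $\mu_1({\A},a)$ can be larger (e.g.\ $a\geq 0$), and a perturbative argument off $\LL_{{\A},0}$ would cap $\beta$ at a quantity depending on $\min_k|k-\Phi_{\A}|$ and $\|a_-\|_\infty$ rather than on $\mu_1({\A},a)$. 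To reach the stated endpoint one has to diagonalize $L_{{\A},a}$ directly and run the mode-by-mode Bessel analysis, which is precisely what \cite{CYZ} does.
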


\begin{remark}\label{rem:loc}
In particular $a\equiv0$, then $\nu_0=\min_{k\in\Z}\{|k-\Phi_{\A}|\}$. If $v_0, v_1\in \text{span}\{ \psi_{k}(\theta) \}_{k=k_0+1}^\infty$,
then one can take $\nu_0=\sqrt{\mu_{k_0}}$. For example, if $\mu_{k_0}\geq1$, then one can replace $\nu_0$ by $1$ in the above statements.
\end{remark}

In the rest of this section, we use Theorem \ref{thm:Stri-0} and Proposition \ref{prop:lse} to prove Theorem \ref{thm:Stri}.
Since the inhomogeneous Strichartz estimates in \eqref{stri} are direct consequence of the homogenous estimates and Christ-Kiselev lemma \cite{CK} as did in Section \ref{sec:inh},
 we only prove the homogeneous Strichartz estimates in \eqref{stri}, that is, $F=0$.
To this end, without loss of generality,  assuming $u_1=0$, we thus consider the free Klein-Gordon equation
\begin{equation*}
\partial_{t}^2u+\LL_{{\A},a} u+u=0, \quad u(0)=u_0,
~\partial_tu(0)=0.
\end{equation*}
We need to establish \eqref{stri} for $(q,p)\in\Lambda_{s,\eta}$ with $0\leq\eta\leq 1$ and $0\leq s<1$.
More precisely, $(q,p)$ is in  the region $ABCD$ of Figure 1, while $s=1$ in the line $CD$.

 Theorem \ref{thm:Stri} in the case $q=+\infty$ (i.e. the line $AD$) immediately follows by Spectral Theory and the Sobolev embedding \cite[Lemma 2.4]{FZZ}. Indeed, one has
\begin{equation*}
\begin{split}
\|u(t,x)\|_{L^\infty(\R;L^{p}(\R^2))}&\lesssim \|\mathcal{L}_{{\A},a}^{\frac s2} u(t,x)\|_{L^\infty(\R;L^{2}(\R^2))}\\&\lesssim \|u_0\|_{H^{s}_{{\A},a}(\R^2)}\end{split}
\end{equation*}
where $s=1-\tfrac2p$ and $2\leq p<+\infty$. By interpolation, we only need to prove \eqref{stri} for $(q,p)\in\Lambda_{s,\eta}$ with $0\leq\eta\leq 1$ and $\frac12\leq s<1$,
that is, the pairs $(q,p)$ are in the region $CDD'C'$. From now, we only focus on $\frac12\leq s<1$.

We split the initial data into two parts,  $u_0=u_{0,l}+u_{0,h}$ where $u_{0,h}=u_0-u_{0,l}$ and 
 \begin{equation}\label{k0}
 u_{0,l}=\sum_{\{1\leq k\leq k_0\}}a_k(r)\psi_k(\theta),
 \end{equation}
 where $k_0=\max\{k\in \N: \mu_k\leq 1\}$.

\vspace{0.2cm}

%
%
%
\begin{center}
 \begin{tikzpicture}[scale=1]
 \draw[->] (0,0) -- (6,0) node[anchor=north] {$\frac1q$};
\draw[->] (0,0) -- (0,6)  node[anchor=east] {$\frac1p$};
\path (2,-1) node(caption){Fig 1. $0\leq\eta\leq1$};  

 \draw (0,0) node[anchor=north] {O}
 (2.5,0) node[anchor=north] {$\frac14$}
 (5,0) node[anchor=north] {$\frac12$}
 (4,0) node[anchor=north] {$\frac{1+\eta}4$}
 (3,0) node[anchor=north] {$\frac{\eta}2$};

\draw  (0, 5) node[anchor=east] {$\frac12$}
(0, 2) node[anchor=east] {$\frac{\eta}{2(2+\eta)}$};

\draw[<-] (1.8,3) -- (2.3,3.6) node[anchor=south]{$~~~~~~~~\qquad \frac2q=(1+\eta)(\frac1{2}-\frac1p)$};

\filldraw[fill=gray!50](0,5)--(4,0)--(3,0)--(0,2);

\draw[<-] (2.6,1.3) -- (3,2) node[anchor=south]{$~~~~~~~~\qquad\qquad \qquad\qquad \frac12=s=(2+\eta)(\frac1{2}-\frac1p)-\frac1q$};

 \draw[thick]
 (0,5) -- (4,0)
(-0.05, 5)-- (0.05, 5)
(-0.05, 2)-- (0.05, 2)
(0,3.2)--(3.8,0.25)
(2.5,-0.05) -- (2.5,0.05)  (5,-0.05) -- (5,0.05);  
\draw[dashed,thick] (0,2) -- (3,0); 

\draw (0.1,5) node[anchor=west] {\small A};
\draw (0,1.6) node[anchor=west] {\small D};
\draw (0,3.2) node[anchor=west] {\small D$'$};
\draw (4,0.15) node[anchor=west] {\small B};
\draw (2.8,0.15) node[anchor=west] {\small C};
\draw (3.8,0.5) node[anchor=west] {\small C$'$};

\draw (0,2) circle (0.05);
\draw (4,0) circle (0.05);
\draw (3,0) circle (0.05);
\draw (3.8,0.25) circle (0.05)[fill=gray!90];
\draw (0,3.2) circle (0.05)[fill=gray!90];

\end{tikzpicture}

\end{center}

%
%
%
%
%
%
Correspondingly, the solution is splitted into two parts, $u=u_l+u_h$, where $u_l$ and $u_h$ satisfy
\begin{equation}\label{KG-l}
\partial_{t}^2u_l+\LL_{{\A},a} u_l+u_l=0, \quad u(0)=u_{0,l},
~\partial_tu(0)=0,
\end{equation}
and
\begin{equation}\label{KG-h}
\partial_{t}^2u_h+\LL_{{\A},a} u_h+u_h=0, \quad u(0)=u_{0,h},
~\partial_tu(0)=0.
\end{equation}

To prove \eqref{stri} with $u_1=0$ and $F=0$, it suffices to prove
\begin{equation}\label{u-h}
\begin{split}
\|u_h(t,x)\|_{L^q(\R;L^p(\R^2))}\lesssim \|u_{0,h}\|_{ H^{s}_{{\A},a}(\R^2)},
\end{split}
\end{equation}
and
\begin{equation}\label{u-l}
\begin{split}
\|u_l(t,x)\|_{L^q(\R;L^p(\R^2))}\lesssim \|u_{0,l}\|_{ H^{s}_{{\A},a}(\R^2)}.
\end{split}
\end{equation} We first prove \eqref{u-h}. By Duhamel's formula,
we have
\begin{align}\label{h-duhamel'}
u_h(t,x)&=\frac{e^{it\sqrt{1+\LL_{{\A},a}}}+e^{-it\sqrt{1+\LL_{{\A},a}}}}2 u_{0,h}\\\nonumber
&=\frac{e^{it\sqrt{1+\LL_{{\A},0}}}+e^{-it\sqrt{1+\LL_{{\A},0}}}}2 u_{0,h}+\int_0^t\frac{\sin{(t-\tau)\sqrt{1+\LL_{{\A},0}}}}
{\sqrt{1+\LL_{{\A},0}}}(V(x)u_h(\tau,x))d\tau,
\end{align}
with $V(x)=\tfrac{a(\hat{x})}{|x|^2}.$
By using \eqref{h-duhamel'} and Theorem \ref{thm:Stri-0}, we have
\begin{equation*}
\begin{split}
&\|u_h(t,x)\|_{L^q(\R;L^p(\R^2))}\\\lesssim& \|u_{0,h}\|_{ H^{s}_{{\A},0}(\R^2)}+\Big\|\int_0^t\frac{\sin{(t-\tau)\sqrt{1+\LL_{{\A},0}}}}
{\sqrt{1+\LL_{{\A},0}}}(V(x)u_h(\tau,x))d\tau\Big\|_{_{L^q(\R;L^p(\R^2))}}.
\end{split}
\end{equation*}
From \cite[Lemma 2.3]{FZZ}, one has
\begin{equation}\label{equ:equisobeqi}
  \|f\|_{\dot{H}^s_{{\A},0}(\R^2)}\simeq \|f\|_{\dot{H}^s_{{\A},a}(\R^2)},
\end{equation}
for all $s\in[-1,1]$.
Thus our main task now is to prove
\begin{equation}\label{est:h-inh}
\begin{split}
\Big\|\int_0^t\frac{\sin{(t-\tau)\sqrt{1+\LL_{{\A},0}}}}
{\sqrt{1+\LL_{{\A},0}}}(V(x)u_h(\tau,x))d\tau\Big\|_{_{L^q(\R;L^p(\R^2))}}\lesssim \|u_{0,h}\|_{ H^{s}_{{\A},a}(\R^2)}.
\end{split}
\end{equation}
To this end, we estimate
\begin{equation}\label{est:h-inh'}
\begin{split}
&\Big\|\int_0^t\frac{\sin{\big((t-\tau)\sqrt{1+\LL_{{\A},0}}}\big)}
{\sqrt{1+\LL_{{\A},0}}}(V(x)u_h(\tau,x))d\tau\Big\|_{_{L^q(\R;L^p(\R^2))}}
\\\lesssim& \Big\|\int_0^t\frac{\sin{(t-\tau)\sqrt{1+\LL_{{\A},0}}}}
{\sqrt{1+\LL_{{\A},0}}}(1-\varphi_0)(\sqrt{\LL_{{\A},0}})(V(x)u_h(\tau,x))d\tau\Big\|_{_{L^q(\R;L^p(\R^2))}}\\
&+\Big\|\int_0^t\frac{\sin{(t-\tau)\sqrt{1+\LL_{{\A},0}}}}
{\sqrt{1+\LL_{{\A},0}}}\varphi_0(\sqrt{\LL_{{\A},0}})(V(x)u_h(\tau,x))d\tau\Big\|_{_{L^q(\R;L^p(\R^2))}}.
\end{split}
\end{equation}
Hence \eqref{est:h-inh} is the consequence of the following lemma.
\begin{lemma}
For $\frac12\leq s<1$ and $(q,p)\in\Lambda_{s,\eta}$, we have
\begin{equation}\label{est:h-h-inh}
\begin{split}
& \Big\|\int_0^t\frac{\sin{(t-\tau)\sqrt{1+\LL_{{\A},0}}}}
{\sqrt{1+\LL_{{\A},0}}}(1-\varphi_0)(\sqrt{\LL_{{\A},0}})(V(x)u_h(\tau,x))d\tau\Big\|_{_{L^q(\R;L^p(\R^2))}}\\
\lesssim& \|u_{0,h}\|_{ H^{s}_{{\A},a}(\R^2)},
\end{split}
\end{equation}
and
\begin{equation}\label{est:h-l-inh}
\begin{split}
&\Big\|\int_0^t\frac{\sin{(t-\tau)\sqrt{1+\LL_{{\A},0}}}}
{\sqrt{1+\LL_{{\A},0}}}\varphi_0(\sqrt{\LL_{{\A},0}})(V(x)u_h(\tau,x))d\tau\Big\|_{_{L^q(\R;L^p(\R^2))}}\\
\lesssim&\|u_{0,h}\|_{ H^{s}_{{\A},a}(\R^2)}.
\end{split}
\end{equation}

\end{lemma}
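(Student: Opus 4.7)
\medskip

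\noindent\textbf{Proof proposal.} The plan is to apply the perturbation strategy of Burq--Planchon--Stalker--Tahvildar-Zadeh: combine the free Strichartz estimate of Theorem \ref{thm:Stri-0} (for $\LL_{{\A},0}$) with the local smoothing estimates of Proposition \ref{prop:lse} to absorb the inverse-square perturbation $V(x)=a(\hat x)|x|^{-2}$. The key algebraic trick is to split the weight symmetrically as
\begin{equation*}
V(x)=|x|^{-\beta_1}\cdot |x|^{-\beta_2}\,a(\hat x),\qquad \beta_1+\beta_2=2,
\end{equation*}
so that one copy of $|x|^{-\beta_1}$ pairs with the dual of the Klein--Gordon propagator for $\LL_{{\A},0}$, and the other copy of $|x|^{-\beta_2}$ pairs with $u_h$ itself, viewed as the solution of the $\LL_{{\A},a}$-equation. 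I will take $\beta_1=\beta_2=1$ for the bulk of the argument, and tune them only at the endpoint $s=1/2$.

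\medskip

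\noindent For \eqref{est:h-h-inh}, denote $S(t)=\sin(t\sqrt{1+\LL_{{\A},0}})/\sqrt{1+\LL_{{\A},0}}$ and let $P_h=(1-\varphi_0)(\sqrt{\LL_{{\A},0}})$. Writing
\begin{equation*}
\int_0^t S(t-\tau)P_h(V u_h)(\tau)\,d\tau
=\int_0^t S(t-\tau)P_h\,|x|^{-1}\bigl[|x|^{-1}a(\hat x)u_h(\tau)\bigr]\,d\tau,
\end{equation*}
I would apply a Keel--Tao/Christ--Kiselev $TT^\ast$ composition, in which the ``left half'' uses, by duality of the high-frequency local smoothing \eqref{h-local-s} with $a\equiv 0$ and $\beta=1$,
\begin{equation*}
\Bigl\|\int U^\ast(\tau)P_h\,|x|^{-1}\phi(\tau)\,d\tau\Bigr\|_{H^{-1/2}_{{\A},0}(\R^2)}\lesssim \|\phi\|_{L^2_{t,x}},
\end{equation*}
followed by the Strichartz estimate of Theorem \ref{thm:Stri-0} for the half-wave propagator $U(t)=e^{it\sqrt{1+\LL_{{\A},0}}}$ that maps $H^{s-1}_{{\A},0}\hookrightarrow L^q_tL^p_x$ for $(q,p)\in\Lambda_{s,\eta}$ (absorbing the $\sqrt{1+\LL_{{\A},0}}$ factor from $S$). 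The ``right half'' then uses the direct high-frequency local smoothing from Proposition \ref{prop:lse} for $u_h$ itself, which solves \eqref{KG-h}, to control $\||x|^{-1}a(\hat x)u_h\|_{L^2_{t,x}}\lesssim \|u_{0,h}\|_{H^{1/2}_{{\A},a}}$. Since $s\ge 1/2$ this is dominated by $\|u_{0,h}\|_{H^s_{{\A},a}}$, and the norm equivalence \eqref{equ:equisobeqi} converts between the two Sobolev scales freely.

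\medskip

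\noindent The low-frequency estimate \eqref{est:h-l-inh} follows the same template, but now the relevant local smoothing is \eqref{l-local-s}, which requires $\beta\in[1,1+\nu_0)$. I therefore again take $\beta_1=\beta_2=1$, which is the borderline admissible choice, and pay the same $1/2$-derivative loss on each side. For the low-frequency piece Bernstein's inequality in the distorted Littlewood--Paley calculus of \cite{FZZ} is available to upgrade $L^2$-bounds to any $L^p$-bound with $p\ge 2$, which handles the Strichartz side without loss when $\varphi_0(\sqrt{\LL_{{\A},0}})$ is present. The final assembly is then identical: the RHS collapses to $\|u_{0,h}\|_{H^{1/2}_{{\A},a}}\le \|u_{0,h}\|_{H^s_{{\A},a}}$.

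\medskip

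\noindent The main obstacle, I expect, is bookkeeping at the $L^2_t$ endpoint and making sure the split $\beta_1=\beta_2=1$ is actually admissible on both sides simultaneously: one needs $1\in(1/2,1+\nu_0)$ on the propagator side and $1\in[1,1+\nu_0)$ on the $u_h$ side, which requires verifying $\nu_0>0$. For $\LL_{{\A},0}$ this is $\min_{k\in\Z}|k-\Phi_{\A}|>0$, guaranteed by $\Phi_{\A}\notin\Z$; for $\LL_{{\A},a}$ it follows from the Hardy inequality \eqref{equ:ghardyLW} together with \eqref{equ:condassa}. A secondary point is that the Christ--Kiselev retracted form (replacing $\int_0^t$ by $\int_0^\infty$) is legitimate as long as $q>2$, so the genuine $q=2$ endpoint of the inhomogeneous estimate has to be recovered by the standard bilinear Keel--Tao interpolation applied to the symmetric composition described above.
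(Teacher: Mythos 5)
Your overall architecture — a $TT^\ast$-type composition pairing dual local smoothing for $\LL_{{\A},0}$ on one side with the free Strichartz estimate of Theorem \ref{thm:Stri-0} on the other, weight splitting $|x|^{-2}=|x|^{-\beta_1}|x|^{-\beta_2}$, and Christ--Kiselev to restore the retarded integral — is exactly the strategy the paper uses. The low-frequency estimate \eqref{est:h-l-inh} is also handled in the paper with the symmetric split $\beta_1=\beta_2=1$, matching your proposal there.

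However, there is a genuine gap in the high-frequency estimate \eqref{est:h-h-inh}, and it stems from keeping $\beta_1=\beta_2=1$ ``for the bulk of the argument, and tuning only at the endpoint $s=1/2$''. This is exactly backwards. With $\beta_1=1$, the dual of \eqref{h-local-s} lands you in $H^{-1/2}_{{\A},0}$, while the Strichartz estimate (after absorbing the $(1+\LL_{{\A},0})^{-1/2}$ from the sine propagator) maps $H^{s-1}_{{\A},0}\to L^q_tL^p_x$ for $(q,p)\in\Lambda_{s,\eta}$. On the high-frequency piece, $\|\cdot\|_{H^{s-1}_{{\A},0}}\lesssim\|\cdot\|_{H^{-1/2}_{{\A},0}}$ holds if and only if $s-1\le -1/2$, i.e.\ $s\le 1/2$; for $s>1/2$ the norm $H^{s-1}_{{\A},0}$ is strictly \emph{stronger} than $H^{-1/2}_{{\A},0}$, so your composition does not close. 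In other words, $s=1/2$ is the \emph{only} value at which the symmetric split works, not the exceptional one that needs fixing.

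The correct choice — which is what the paper does — is to let the split depend on $s$: take $\beta_1=\tfrac32-s$ (so $\beta_1\in(\tfrac12,1]$ for $s\in[\tfrac12,1)$) and $\beta_2=2-\beta_1=s+\tfrac12$. Then the dual local smoothing gives exactly $H^{1/2-\beta_1}_{{\A},0}=H^{s-1}_{{\A},0}$, which is precisely what the Strichartz estimate requires, while on the $u_h$ side $\|r^{-\beta_2}u_h\|_{L^2_{t,x}}\lesssim\|u_{0,h}\|_{H^{\beta_2-1/2}_{{\A},a}}=\|u_{0,h}\|_{H^{s}_{{\A},a}}$ by \eqref{l-local-s}--\eqref{h-local-s}, with $\beta_2\in[1,\tfrac32)$ admissible because the spectral truncation in \eqref{k0} pushes the effective $\nu_0$ above $1$. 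Your secondary concern about $\nu_0>0$ is valid but not the real obstacle; the obstacle is that $\beta_1$ must be tuned to $s$ so the derivative counts match. As a small further remark, the endpoint $q=2$ never actually arises for $(q,p)\in\Lambda_{s,\eta}$ with $p<\infty$, so the paper's ``since $q>2$'' appeal to Christ--Kiselev suffices without invoking the bilinear Keel--Tao interpolation.
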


\begin{proof}
We first prove \eqref{est:h-h-inh}.
Let $\beta=\frac32-s$ with $\frac12\leq s<1$, then $\frac12<\beta\leq 1$.  We  define the operator $T$ by 
\begin{align*}
   Tf=& r^{-\beta}e^{it\sqrt{1+\LL_{{\A},0}}}(1-\varphi_0)(\sqrt{\LL_{{\A},0}})(1+ \LL_{{\A},0})^{\frac12(\frac12-\beta)} f,\quad f\in L^2(\R^2).
\end{align*}
By \eqref {h-local-s} and Remark \ref{rem:loc}, if $f\in\text{span}\{ \psi_{k}(\theta) \}_{k=k_0+1}^\infty$,  it follows that $T$ is a bounded operator from $L^2(\R^2)$ to $L^2(\R;L^2(\R^2))$, since $\beta\in(\frac12,1]\subset (\frac12,1+\sqrt{\mu_{k_0}})$ (due to the definition of $k_0$ in \eqref{k0}).
By duality, its adjoint $T^*$
\begin{equation*}
T^* F=\int_{\tau\in\R}(1+\LL_{{\A},0})^{\frac12(\frac12-\beta)} (1-\varphi_0)(\sqrt{\LL_{{\A},0}}) e^{-i\tau\sqrt{1+\LL_{{\A},0}}}  r^{-\beta}  F(\tau)d\tau
\end{equation*}
is bounded from $L^2(\R;L^2(\R^2))$ to $L^2(\R^2)$. Define the operator
\begin{equation*}
\begin{split}
B:\,& L^2(\R;L^2(\R^2))\to L^q(\R;L^p(\R^2)), \\ B F&=\int_{\tau\in\R} \frac{e^{i(t-\tau)\sqrt{1+\LL_{{\A},0}}}}{\sqrt{1+\LL_{{\A},0}}}(1-\varphi_0)(\sqrt{\LL_{{\A},0}}) r^{-\beta}F(\tau)d\tau.
\end{split}
\end{equation*}
Hence by the Strichartz estimate \eqref{stri}   with $a\equiv0$ and $s=\frac32-\beta$, one has
\begin{equation}\label{BF}
\begin{split}
&\|B F\|_{L^q(\R;L^p(\R^2))}\\=&\big\| e^{i t\sqrt{1+\LL_{{\A},0}}}\int_{\tau\in\R}\frac{e^{-i\tau\sqrt{1+\LL_{{\A},0}}}}{\sqrt{1+\LL_{{\A},0}}} (1-\varphi_0)(\sqrt{\LL_{{\A},0}})r^{-\beta} F(\tau)d\tau\big\|_{L^q(\R;L^p(\R^2))}\\
\lesssim& \big\|(1+\LL_{{\A},0})^{\frac12(\frac32-\beta)} \int_{\tau\in\R}\frac{e^{-i\tau\sqrt{1+\LL_{{\A},0}}}}{\sqrt{1+\LL_{{\A},0}}}(1-\varphi_0)(\sqrt{\LL_{{\A},0}}) r^{-\beta} F(\tau)d\tau\big\|_{L^2(\R^2)}\\=&\|T^*F\|_{L^2}\lesssim \|F\|_{L^2(\R;L^2(\R^2))}.
\end{split}
\end{equation}
Now we are ready to prove inquality \eqref{est:h-h-inh}. As
$$\sin\big((t-\tau)\sqrt{1+\LL_{{\A},0}}\big)=\frac{1}{2i}\big(e^{i(t-\tau)\sqrt{1+\LL_{{\A},0}}}-e^{-i(t-\tau)\sqrt{1+\LL_{{\A},0}}}\big),$$
 by \eqref{BF}, we have
\begin{equation*}
\begin{split}
&\Big\|\int_\R\frac{\sin{(t-\tau)\sqrt{1+\LL_{{\A},0}}}}
{\sqrt{1+\LL_{{\A},0}}}(1-\varphi_0)(\sqrt{\LL_{{\A},0}}) (V(x)u_h(\tau,x))d\tau\Big\|_{L^q(\R;L^p(\R^2))}\\
\lesssim& \|B(r^{\beta}V(x)u_h(\tau,x))\|_{L^q(\R;L^p(\R^2))}\lesssim \|r^{\beta-2}u_h(\tau,x))\|_{L^2(\R;L^2(\R^2))}
\\
\lesssim& \|u_{0,h}\|_{ H^{\frac32-\beta}_{{\A},a}(\R^2)},
\end{split}
\end{equation*}
where we have used  \eqref{l-local-s} and \eqref{h-local-s} since $\beta\in(\frac12,1]$ satisfies $1\leq 2-\beta<1+\sqrt{\mu_{k_0}}$.
Since $q>2$ and  $s=\frac32-\beta$, by the Christ-Kiselev lemma \cite{CK}, we have \eqref{est:h-h-inh}.  \vspace{0.2cm}

We next prove \eqref{est:h-l-inh} in a similar way. We define the operator
$$T: L^2(\R^2)\to L^2(\R;L^2(\R^2)), \quad Tf= r^{-1}e^{it\sqrt{1+\LL_{{\A},0}}}\varphi_0(\sqrt{\LL_{{\A},0}}) f.$$
From the proof of Proposition \ref{prop:lse} again with $\beta=1$,  it follows that $T$ is a bounded operator.
By duality, its adjoint $T^*$
$$T^*: L^2(\R;L^2(\R^2))\to L^2, \quad T^* F=\int_{\tau\in\R}\varphi_0(\sqrt{\LL_{{\A},0}}) e^{-i\tau\sqrt{1+\LL_{{\A},0}}}  r^{-1}  F(\tau)d\tau$$
is also bounded. Define the operator
$$B: L^2(\R;L^2(\R^2))\to L^q(\R;L^p(\R^2)), \quad B F=\int_{\tau\in\R} \frac{e^{i(t-\tau)\sqrt{1+\LL_{{\A},0}}}}{\sqrt{1+\LL_{{\A},0}}}\varphi_0(\sqrt{\LL_{{\A},0}}) r^{-1}F(\tau)d\tau.$$
Hence again by the Strichartz estimate \eqref{stri} with $a=0$ and $s=(2+\eta)\Big(\frac12-\frac1p\Big)-\frac1q$, one has
\begin{equation}\label{BF'}
\begin{split}
&\|B F\|_{L^q(\R;L^p(\R^2))}\\=&\big\| e^{i t\sqrt{1+\LL_{{\A},0}}}\int_{\tau\in\R}\frac{e^{-i\tau\sqrt{1+\LL_{{\A},0}}}}{\sqrt{1+\LL_{{\A},0}}} \varphi_0(\sqrt{\LL_{{\A},0}})r^{-1} F(\tau)d\tau\big\|_{L^q(\R;L^p(\R^2))}\\
\lesssim& \big\|(1+\LL_{{\A},0})^{\frac s2} \int_{\tau\in\R}\frac{e^{-i\tau\sqrt{1+\LL_{{\A},0}}}}{\sqrt{1+\LL_{{\A},0}}}\varphi_0(\sqrt{\LL_{{\A},0}}) r^{-1} F(\tau)d\tau\big\|_{L^2(\R^2)}\\\lesssim&\|T^*F\|_{L^2}\lesssim \|F\|_{L^2(\R;L^2(\R^2))}.
\end{split}
\end{equation}
Now we estimate \eqref{est:h-l-inh}.
By using \eqref{BF'}, and similar argument as above, for $1/2\leq s< 1$, we have
\begin{equation*}
\begin{split}
&\Big\|\int_\R\frac{\sin{(t-\tau)\sqrt{1+\LL_{{\A},0}}}}
{\sqrt{1+\LL_{{\A},0}}}\varphi_0(\sqrt{\LL_{{\A},0}}) (V(x)u_h(\tau,x))d\tau\Big\|_{L^q(\R;L^p(\R^2))}\\
\lesssim& \|B(r V(x)u_h(\tau,x))\|_{L^q(\R;L^p(\R^2))}\lesssim \|r^{-1}u_h(\tau,x))\|_{L^2(\R;L^2(\R^2))}
\\
\lesssim& \|u_{0,h}\|_{ H^{\frac12}_{{\A},a}(\R^2)} \lesssim \|u_{0,h}\|_{ H^{s}_{{\A},a}(\R^2)}.
\end{split}
\end{equation*}
Due to $q>2$, by the Christ-Kiselev lemma \cite{CK}, we obtain \eqref{est:h-l-inh}.
\end{proof}

Next we aim to prove \eqref{u-l}. Notice that the above argument breaks down since a tighter restriction on $\beta$.
Nevertheless, if $1\leq k\leq k_0$, we can follow the argument of \cite{PST} which treated the radial case. Since
\begin{equation*}
\begin{split}
u_l(t,x)&=\frac{e^{it\sqrt{1+\LL_{{\A},a}}}+e^{-it\sqrt{1+\LL_{{\A},a}}}}2 u_{0,l},\end{split}
\end{equation*}
we only consider the Strichartz estimate for $e^{it\sqrt{1+\LL_{\A,a}}}u_{0,l}$.
By using \eqref{funct}, we write
 \begin{equation*}
\begin{split} e^{it\sqrt{1+\LL_{{\A},a}}}u_{0,l}&=
\sum_{1\leq k\leq k_0}\psi_k(\theta)\int_0^\infty J_{\mu_k}(r\rho)e^{it\sqrt{1+\rho^2}}
\mathcal{H}_{\mu_k}(a_k)\rho d\rho,\\
&=\sum_{1\leq k\leq k_0}\psi_k(\theta)\mathcal{H}_{\mu_k}[e^{it\sqrt{1+\rho^2}}\mathcal{H}_{\mu_k}(a_k)](r),
\end{split}
\end{equation*}
where $\mathcal{H}_{\mu_k}$ is the Hankel transform defined in \eqref{hankel}.

By triangle inequality and the  eigenfunctions $\|\psi_{k}\|_{L^\infty(\mathbb{S}^1)}\leq C$, one has
 \begin{equation}\label{est:l-hom'}
\begin{split} &\|e^{it\sqrt{1+\LL_{{\A},a}}}u_{0,l}\|_{L^q(\R;L^p(\R^2))} \\\leq& C
\sum_{1\leq k\leq k_0}\left\|\mathcal{H}_{\mu_k}[e^{it\sqrt{1+\rho^2}}\mathcal{H}_{\mu_k}(a_k)](r)\right\|_{L^q(\R;L^p_{rdr})}.
\end{split}
\end{equation}
For our purpose, we need the properties on the Hankel transforms which are proved in \cite[Corollary 3.2, Theorem 3.8]{PST}.

\begin{lemma}\label{K0} Let $\mathcal{H}_\mu$ be the Hankel transform of order $\mu$ as defined in \eqref{hankel}, and let $\mathcal{K}^0_{\mu,\nu}:=\mathcal{H}_\mu\mathcal{H}_\nu$. Then

$(\mathrm i)$ $\mathcal{H}_\mu\mathcal{H}_\mu=Id$,

$(\mathrm {ii})$  the operator $\mathcal{K}^0_{\mu,0}$
is bounded on $L^p_{r dr}([0,\infty))$ provided $1<p<\infty$,

$(\mathrm {iii})$  the operator $\mathcal{K}^0_{0,\mu}$ is continuous on $H^s$ provided
$$ -\min\{0,\mu,-s\}< 1<2+\min\{0,\mu,\mu+s\}.$$

\end{lemma}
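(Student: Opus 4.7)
The plan is to treat the three items separately, but all through a unified Mellin transform framework, which diagonalizes Hankel transforms into multiplication operators and reduces mapping properties to symbol bounds.

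For part (i), I would invoke the classical Hankel inversion formula. The key ingredient is the Bessel orthogonality relation
\begin{equation*}
\int_0^\infty J_\mu(r\rho) J_\mu(r'\rho)\,\rho\,d\rho = \frac{1}{r}\delta(r-r'),\qquad \mu>-\tfrac12,
\end{equation*}
which, applied inside $\mathcal{H}_\mu\mathcal{H}_\mu f(r)=\int_0^\infty J_\mu(r\rho)\bigl(\int_0^\infty J_\mu(r'\rho)f(r')r'\,dr'\bigr)\rho\,d\rho$ and followed by an interchange of integration justified on a dense class (e.g.\ Schwartz functions vanishing at the origin), produces $f(r)$. This is part of the standard Hankel theory on $L^2_{rdr}$.

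For parts (ii) and (iii), I would apply the Mellin transform $(\mathcal{M}g)(z)=\int_0^\infty r^{z-1}g(r)\,dr$. Under $\mathcal{M}$, the Hankel transform of order $\mu$ becomes multiplication by an explicit ratio of Gamma functions (arising from $\int_0^\infty r^{z-1}J_\mu(r)\,dr=2^{z-1}\Gamma(\tfrac{\mu+z}{2})/\Gamma(\tfrac{\mu+2-z}{2})$) together with a reflection $z\mapsto 2-z$. Consequently $\mathcal{K}^0_{\mu,0}=\mathcal{H}_\mu\mathcal{H}_0$ acts as a pure Mellin multiplier $m_{\mu,0}(z)$. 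For (ii), the $L^p_{rdr}$-boundedness reduces, via the isometry $\mathcal{M}:L^p_{rdr}\to L^p$ of the vertical line $\{{\rm Re}\,z=2/p\}$, to a Mikhlin/Marcinkiewicz-type condition on $m_{\mu,0}$ along that line, which follows from Stirling's asymptotics showing $m_{\mu,0}$ is bounded with bounded derivatives whenever the integration contour avoids the Gamma poles -- this is exactly the condition $1<p<\infty$. For (iii), the Sobolev statement is obtained by shifting the contour to account for the $H^s$-weight and reading off when the Gamma poles (located at negative even integers shifted by $\mu$) do not cross the integration line; the stated constraint $-\min\{0,\mu,-s\}<1<2+\min\{0,\mu,\mu+s\}$ precisely encodes the condition that all relevant poles stay off the contour.

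The main obstacle is the bookkeeping for (iii): one needs to convert the $H^s$-norm faithfully through the Mellin transform, track how shifting the contour by $s$ interacts with the $\mu$-dependent poles of $\Gamma(\tfrac{\mu+z}{2})$ and the ``reflected'' poles of $\Gamma(\tfrac{2-z}{2})$, and verify the Mikhlin symbol bounds uniformly on the shifted line. Once these asymptotic estimates on vertical strips are in place, both (ii) and (iii) follow by standard Mellin multiplier theory, essentially reproducing \cite[Cor.~3.2 and Thm.~3.8]{PST}.
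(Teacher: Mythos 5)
The paper does not prove this lemma at all; it is imported verbatim from \cite[Cor.~3.2, Thm.~3.8]{PST}, exactly as the surrounding text indicates. Your Mellin-transform sketch is in fact the method used in that reference (diagonalizing $\mathcal{H}_\mu$ as the multiplier $h_\mu(z)=2^{z-1}\Gamma(\tfrac{\mu+z}{2})/\Gamma(\tfrac{\mu+2-z}{2})$ composed with $z\mapsto 2-z$, so that $\mathcal{K}^0_{\mu,\nu}$ is a pure Mellin multiplier $h_\mu(z)h_\nu(2-z)$, then invoking Stirling bounds and Mikhlin-type multiplier theory on the relevant vertical line), so you are essentially reproducing the cited proof rather than offering an alternative. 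One small imprecision: the Mellin transform is not an isometry $L^p_{rdr}\to L^p$ for $p\neq 2$; what you want is the exponential change of variables reducing the Mellin multiplier to a Fourier multiplier on $L^p(\R)$ and then Mikhlin--Hörmander, which is what the bounded-derivatives condition is for.
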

By using this lemma, the operator $\mathcal{K}^0_{\mu_k,0}$ is bounded in
$L^p_{rdr}([0,\infty))$. Therefore from \eqref{est:l-hom'}, we obtain
 \begin{equation}\label{est:l-hom''}
\begin{split}
&\|e^{it\sqrt{1+\LL_{{\A},0}}}u_{0,l}\|_{L^q(\R;L^p(\R^2))}\\&\leq C \sum_{1\leq k\leq k_0}\left\|(\mathcal{H}_{\mu_k}\mathcal{H}_0)\mathcal{H}_0[e^{it\sqrt{1+\rho^2}}\mathcal{H}_0(\mathcal{H}_0\mathcal{H}_{\mu_k})
(a_{k})](r)\right\|_{L^q(\R;L^p_{rdr})}\\&\leq C \sum_{1\leq k\leq k_0}\left\| \mathcal{H}_0[e^{it\sqrt{1+\rho^2}}\mathcal{H}_0 \mathcal{K}^0_{0,\mu_k}(a_k)](r)\right\|_{L^q(\R;L^p_{rdr})}.\end{split}
\end{equation}
On the other hand, the propagator
\begin{equation*}
\mathcal{H}_0 e^{it\sqrt{1+\rho^2}}\mathcal{H}_0
\end{equation*}
 is the same as the classical Klein-Gordon propagator in the 2D  radial case in which the Strichartz estimates hold.
By using \eqref{est:l-hom''} and (iii) in Lemma \ref{K0} with $\tfrac12\leq s<1$, we obtain
 \begin{equation*}
\begin{split}
\|e^{it\sqrt{1+\LL_{{\A},a}}}u_{0,l}\|_{L^q(\R;L^p(\R^2))}
\leq C\sum_{1\leq k\leq k_0}\left\|\mathcal{K}^0_{0,\mu_k}(a_k)](r)\right\|_{H^s(\R^2)}
\leq C_{k_0}\left\|a_k(r)\right\|_{H^s(\R^2)}.\end{split}
\end{equation*}

In sum, we have proved Theorem \ref{thm:Stri} when $0\leq s<1$. As mentioned in the introduction, to prove
\eqref{stri} in the blanket region $CDO$ of Figure 1, that is, $(q,p)\in\Lambda_{s,\eta}$ with $0\leq\eta\leq 1$ and $s\in[1,\frac{2+\eta}2)$,
we use the Sobolev inequality established in Proposition \ref{prop:sob}. More precisely,
for any $(q,p)\in\Lambda_{s,\eta}$ with $0\leq\eta\leq 1$ and $s\in[1,\frac{2+\eta}2)$, there exists $p_0$ and $s_0\in[0,1)$ such that
$(q,p_0)\in\Lambda_{s_0,\eta}$. Then by Proposition \ref{prop:sob} and the Strichartz estimates proved in above, we obtain
\begin{equation*}
\begin{split}
\|u(t,x)\|_{L^q(\R;L^{p}(\R^2))}&\lesssim \|\mathcal{L}_{{\A},a}^{\frac \sigma 2} e^{it\sqrt{1+\LL_{{\A},a}}}u_{0}\|_{L^q(\R;L^{p_0}(\R^2))}\\&\lesssim
\|u_0\|_{H^{s_0+\sigma}_{{\A},a}(\R^2)}\leq \|u_0\|_{H^{s}_{{\A},a}(\R^2)},\end{split}
\end{equation*}
where $\sigma=2(\frac1{p_0}-\frac1p)\geq0$ and
\begin{equation*}
s=(2+\eta)(\frac12-\frac1p)-\frac1q=s_0+(2+\eta)(\frac1{p_0}-\frac1p)\geq s_0+\sigma.
\end{equation*}

\begin{center}

\end{center}


\begin{thebibliography}{99}

%



\bibitem{AB59} Y. Aharonov and D. Bohm, Significance of electromagnetic potentials in the quantum theory, Phys. Rev., 115(1959), 485-491.



\bibitem{BFM}  M. D. Blair, G. A. Ford, and J. L. Marzuola, Strichartz estimates for the wave equation on flat cones, IMRN, 2012, 30 pages, doi:10.1093/imrn/rns002.


\bibitem{B} P. Brenner, On space-time means and everywhere defined scattering operators for nonlinear Klein-Gordon equations, Math. Z., (186)1984, 383-391.

\bibitem{BPSS}
 N. Burq, F. Planchon, J. Stalker, and A. S.
Tahvildar-Zadeh, Strichartz estimates for the wave and Schr\"odinger
equations with the inverse-square potential, J. Funct. Anal., 203
(2003), 519-549.

\bibitem{BPST}
 N. Burq, F. Planchon, J. G. Stalker, and A. S. Tahvildar-Zadeh,
Strichartz estimates for the wave and Schr\"odinger equations with potentials of critical decay,
Indiana Univ. Math. J.,  53(2004), 1665-1680.

\bibitem{CD} R. Cushman, and J. J. Duistermaat, The quantum mechanical spherical pendulum. Bull. AMS. 19(1988), 475-479 .


\bibitem{CF}
 F. Cacciafesta, and L. Fanelli,
Dispersive estimates for the Dirac equation in an Aharonov–Bohm field,
Journal of Differential Equations, 263(2017), 4382-4399.

\bibitem{CF1}
 F. Cacciafesta, and L. Fanelli, Weak dispersive estimates for fractional Aharonov-Bohm-Schr\"odinger groups, Dynamics of PDE, 10(2013), 379-392.


\bibitem{CYZ} F. Cacciafesta, Z. Yin and J. Zhang, Generalized Strichartz estimates for wave and Dirac equations in Aharonov-Bohm magnetic fields, arXiv:2008.00340.

\bibitem{CK}
 M. Christ, and A. Kiselev,
Maximal functions
associated to filtrations, J. Funct. Anal., 179(2001),
409-425.

\bibitem{CS}
 S. Cuccagna, and Schirmer,
    On the wave equation with a magnetic potential,
    Comm. Pure Appl. Math., 54(2001), 135-152.

\bibitem{CT1}
 J. Cheeger, M. Taylor, Diffraction of waves by Conical Singularities parts I, Comm.
Pure Appl. Math., 35(1982), 275-331.

\bibitem{CT2}
 J. Cheeger, M. Taylor, Diffraction of waves by Conical Singularities parts  II, Comm.
Pure Appl. Math., 35(1982),  487-529.








\bibitem{DF}
 P. D'Ancona, and L. Fanelli,
Decay estimates for the wave and Dirac equations with a magnetic potential,
Comm. Pure Appl. Math., 60(2007), 357-392.

\bibitem{DFVV}
 P. D'Ancona, L. Fanelli, L. Vega, and N. Visciglia,
Endpoint Strichartz estimates for the magnetic Schr\"odinger equation,
J. Funct. Anal.,  258(2010), 3227-3240.
%
%
%
%

\bibitem{EGS1}
 M.B. Erdogan, M. Goldberg and W. Schlag,
Strichartz and Smoothing Estimates for Schr\"odinger
Operators with Almost Critical Magnetic Potentials in Three and Higher
Dimensions,
Forum Math., 21(2009), 687-722.

\bibitem{EGS2}
 M.B. Erdogan, M. Goldberg and W. Schlag,
Strichartz and smoothing estimates for Schr\"odinger operators with
large magnetic potentials in $\R^3$, J. European Math.
Soc.,  10(2008), 507-531.

\bibitem{Fanelli}
 L. Fanelli, Spherical Schr\"odinger Hamiltonians: Spectral Analysis and Time Decay,
A. Michelangeli, G. Dell’Antonio (eds.), Advances in Quantum Mechanics,
Springer INdAM Series 18, DOI 10.1007/978-3-319-58904-6\_8.


\bibitem{FFFP1}
 L. Fanelli, V. Felli, M. A. Fontelos, and A. Primo, Time decay of scaling critical electromagnetic Schr\"odinger flows,
Comm. Math. Phys., 324(2013), 1033-1067.

\bibitem{FFFP}
 L. Fanelli, V. Felli, M. A. Fontelos, and A. Primo, Time decay of scaling invariant electromagnetic Schr\"odinger equations on the plane,
Comm. Math. Phys., 337(2015), 1515-1533.


\bibitem{FFFParxiv}
 L. Fanelli, V. Felli, M. A. Fontelos, and A. Primo, Time decay of scaling invariant electromagnetic Schr\"odinger equations on the plane,
arXiv:1405.1784.


%


\bibitem{FFT} V. Felli, A. Ferrero, S. Terracini,  Asymptotic behavior of solutions to Schr\"odinger equations near an
isolated singularity of the electromagnetic potential. J. Eur. Math. Soc. 13(2011), 119-174.


\bibitem{FG}
 L. Fanelli and A. Garc\'ia, Counterexamples to Strichartz estimates for the magnetic
Schr\"odinger equation, Comm. Cont. Math., 13(2011), 213-234.

\bibitem{FZZ} L. Fanelli, J. Zhang and J. Zheng,
\newblock Dispersive estimates for 2D-wave equations with critical potentials,
\newblock arXiv:2003.10356v3.

\bibitem{Gur}  D. Gurarie, Zonal Schr\"odinger operators on the n-Sphere: Inverse Spectral Problem and
Rigidity, Comm. Math. Phys. 131(1990), 571-603.

%
%


\bibitem{GV} J. Ginibre and G. Velo, The global Cauchy problem for the nonlinear Schr\"odinger equation revisited, Ann. Inst. H. Poincar\'e Anal. Non Lin\'eaire, 2(1985), 309-327.


\bibitem{GH} C. Guillarmou, and A. Hassell, Uniform Sobolev estimates for
non-trapping metrics, J. Inst. Math. Jussieu, 13(3) (2014), 599--632.

\bibitem{GHS1} C. Guillarmou, A. Hassell and A. Sikora, Resolvent at low
energy III: the spectral measure, Trans. Amer. Math. Soc.,
365(2013), 6103-6148.

\bibitem{GHS2} C. Guillarmou, A. Hassell and A. Sikora, Restriction and
spectral multiplier theorems on asymptotically conic manifolds,
Analysis and PDE, 6(2013), 893-950.

%

\bibitem{HV1}  A. Hassell and A. Vasy, The spectral projections and the resolvent for scattering metrics, J. d'Analyse Math. 79(1999), 241-298.

%
%
%
%
%





\bibitem{HV2}   A. Hassell and A. Vasy, The resolvent for Laplace-type operators on asymptotically conic spaces, Ann. Inst. Fourier (Grenoble), 51(2001), 1299-1346.

\bibitem{HZ} A. Hassell and J. Zhang, Global-in-time Strichartz estimates on nontrapping asymptotically conic manifolds,  Analysis \& PDE, 9(2016), 151-192.

\bibitem{IKO} K. Ishige  Y. Kabeya  E. M. Ouhabaz, The heat kernel of a Schr\"odinger operator with inverse square potential, Proceedings of LMS, 115(2017), 381-410.

\bibitem{K}
   T. Kato, Perturbation theory for linear operators, Springer-Verlag,
  Berlin, 1966.

\bibitem{KT}
 M. Keel and T. Tao, Endpoint Strichartz estimates,
Amer. J. Math., 120(1998), 955-980.


\bibitem{KTZ} H. Koch, D. Tataru and M. Zworski,  Semiclassical $L^p$
estimates, Ann. Henri PoincarPoincar$\acute{e}$, 8(2007)885-916.



\bibitem{Ko}  H. Kova$\breve{r}$\'ik, Heat kernels of two-dimensional magnetic Schr\"odinger and Pauli operators, Calc. Var. Partial Differential Equations,
 44(2012) 351–374.


%

\bibitem{LW}
 A. Laptev, and T. Weidl, Hardy inequalities for
      magnetic Dirichlet forms, Mathematical results in quantum
    mechanics (Prague, 1998), 299--305; Oper. Theory Adv. Appl.  108,
    Birkh\"auser, Basel, 1999.


\bibitem{LSS}  H. Lee, I. Seo and J. Seok, Local smoothing and Strichartz estimates for the Klein-Gordon equation
with the inverse-square potential, Discrete Contin. Dyn. Syst. 40 (2020), 597-608.

\bibitem{LS} V. Liskevich and Z. Sobol, Estimates of integral kernels for semigroups associated with second order elliptic operators with singular coefficients. Potential Anal., 18(2003), 359--390.



\bibitem{MOR}    M. Melgaard, E. Ouhabaz, G. Rozenblum, Negative discrete spectrum of perturbed multivortex Aharonov-Bohm Hamiltonians,
Ann. H. Poincar\'e,  5(2004), 979-1012.

\bibitem{Melrose1} R. B. Melrose, Spectral and scattering theory for the Laplacian on asymptotically Euclidian
spaces. Marcel Dekker, 1994.

\bibitem{Melrose2}  R. B. Melrose, Lecture notes for 18.157: Introduction to microlocal analysis. Available at
http://math.mit.edu/ rbm/18.157-F09/18.157-F09.html, 2009.

\bibitem{Melrose3} R. B. Melrose, The Atiyah-Patodi-Singer index theorem, volume 4 of Research Notes
in Mathematics. A K Peters Ltd., Wellesley, MA, 1993.


\bibitem{MS} P. D. Milman and Yu. A. Semenov, Global heat kernel bounds via desingularizing weights. J. Funct. Anal., 212(2004), 373-398.

%
%



%

\bibitem{PT89}
 M. Peshkin, and A. Tonomura, The Aharonov-Bohm Effect. Lect. Notes Phys.  340(1989).

%
\bibitem{PST}
 F. Planchon, J. Stalker and A. S. Tahvildar-Zadeh, $L^p$ estimates for the wave equation with the inverse-square potential,
Discrete Contin. Dynam. Systems,  9(2003), 427-442.

%


\bibitem{RS}
M. Reed, and B. Simon, Methods of modern mathematical physics. II. Fourier analysis, self-adjointness. Academic Press, New York-London, 1975.
%

\bibitem{S}  W. Schlag, Dispersive estimates for Schr\"odinger operators: a survey. Mathematical aspects of nonlinear dispersive equations, 255-285, Ann. of Math. Stud. 163, Princeton Univ. Press, Princeton, NJ, 2007.


%

\bibitem{sogge} C. D. Sogge, Fourier Integrals in Classical Analysis, Cambridge Tracts in Mathematics,
vol. 105, Cambridge University Press, Cambridge, 1993.


%
%
%

\bibitem{Stein}  E. M. Stein, Singular Integrals and Differentiability Properties of Functions,
Princeton University Press, Princeton (1970).

\bibitem{Taylor}
M. Taylor, Partial Differential Equations, Vol II,
Springer, 1996.

\bibitem{Watson}  G. N. Watson, A Treatise on the Theory of Bessel Functions. Second Edition, Cambridge
University Press, 1944.

\bibitem{Z}  J. Zhang, Resolvent and spectral measure for Schr\"odinger operators on flat Euclidean cones, arXiv:2010.12838.





%
%













%





%


%
%


\bibitem{Zhang}
 J. Zhang, Strichartz estimates and nonlinear wave equation on nontrapping asymptotically conic manifolds, Advances in Math., 271(2015), 91-111.

\bibitem{ZZ1} J. Zhang and J. Zheng, Global-in-time Strichartz estimates and cubic Schr\"{o}dinger equation in a conical singular space, arXiv:1702.05813


\bibitem{ZZ2}    J. Zhang and J. Zheng, Strichartz estimates and wave equation in a conic singular space,  Math. Ann., 376(2020),525–581.

 \bibitem{ZZ3}  J. Zhang and J. Zheng,  Global-in-time Strichartz estimates for Schr\"odinger on scattering manifolds,  Commu. in PDE, 42(2017), 1962-1981.


\bibitem{ZZ19}  J. Zhang and J. Zheng, Strichartz estimate and nonlinear Klein–Gordon equation
on nontrapping scattering space, The Journal of Geometric Analysis, 29(2019), 2957–2984.


\bibitem{Zworski} M. Zworski,  Semiclassical Analysis, Graduate Studies in Mathematics 138, AMS 2012.


\end{thebibliography}
\end{document}